\documentclass[]{siamart1116}

\usepackage{lipsum}
\usepackage{amsopn}
\usepackage{amsfonts}
\usepackage{graphicx}
\usepackage{epstopdf}
\usepackage{algorithmic}
\ifpdf
  \DeclareGraphicsExtensions{.eps,.pdf,.png,.jpg}
\else
  \DeclareGraphicsExtensions{.eps}
\fi

\usepackage[utf8]{inputenc}
\usepackage{amsmath}
\usepackage{amssymb}
\usepackage{mathtools}
\usepackage{physics}
\usepackage{mathrsfs}
\usepackage{tikz}
\usepackage{algorithm}

\usepackage{microtype}
\usepackage{comment}
\usepackage{booktabs}
\usetikzlibrary{arrows}
\usepackage[a4paper, total={6in, 8in}]{geometry}

\numberwithin{theorem}{section}
\newtheorem{notation}[theorem]{Notation}
\newtheorem{remark}[theorem]{Remark}
\providecommand{\assumptionnumber}{}
\makeatletter
\newenvironment{asssumption}[2]
 {%
  \renewcommand{\assumptionnumber}{Assumption $\mathcal{#1}$#2}%
  \begin{assumption*}%
  \protected@edef\@currentlabel{$\mathcal{#1}$#2}%
 }
 {%
  \end{assumption*}
 }
\makeatother
\newtheorem*{assumption*}{\assumptionnumber}

\newcommand{\TheTitle}{Extending CSR decomposition to tropical inhomogeneous matrix products} 
\newcommand{\TheAuthors}{A. Kennedy-Cochran-Patrick and S. Sergeev}

\headers{\TheTitle}{\TheAuthors}

\title{{\TheTitle}\thanks{Submitted to the editors on September 9, 2020.
\funding{This work was supported by EPSRC Grant EP/P019676/1.}}}

\author{
  Arthur Kennedy-Cochran-Patrick\thanks{School of Mathematics, University of Birmingham, Birmingham, UK (\email{axc381@bham.ac.uk}).}
  \and
  Serge{\u{\i}} Sergeev\thanks{School of Mathematics, University of Birmingham, Birmingham, UK (\email{s.sergeev@bham.ac.uk}).}
}

\ifpdf
\hypersetup{
  pdftitle={\TheTitle},
  pdfauthor={\TheAuthors}
}
\fi


\def\digr{\mathcal{D}}
\def\trellis{\mathcal{T}}
\def\trellisgammak{\trellis(\Gamma(k))}
\def\nodes{N}
\def\edges{E}
\def\trellisnodes{\mathcal{N}}
\def\trellisedges{\mathcal{E}}
\def\Rmax{\mathbb{R}_{\max}}
\def\critnodes{\mathcal{N}_{c}}
\def\critedges{\mathcal{E}_c}
\def\crit{\mathbf{C}}
\def\cclass{\mathcal{C}}
\def\subcrit{\mathcal{G}}
\def\wiel{\operatorname{Wi}}
\def\sch{\operatorname{Sch}}
\def\Asup{A^{\sup}}
\def\Ainf{A^{\inf}}
\def\diag{\operatorname{diag}}
\def\init{\operatorname{init}}
\def\final{\operatorname{final}}
\def\full{\operatorname{full}}
\def\tropzero{\varepsilon}

\DeclareMathOperator{\lcm}{lcm}

\newcommand{\modd}[1]{(\operatorname{mod}{#1})}
\newcommand{\walkslen}[4]{\mathcal{W}^{#3}_{#4}(#1\to #2)}
\newcommand{\walks}[3]{\mathcal{W}_{#3}(#1\to #2)}
\newcommand{\walkslennode}[5]{\mathcal{W}^{#3}_{#4}(#1\xrightarrow{#5} #2)}
\newcommand{\walksnode}[4]{\mathcal{W}_{#3}(#1\xrightarrow{#4} #2)}

\begin{document}

\maketitle

\begin{abstract}
This article presents an attempt to extend the CSR decomposition, previously introduced for tropical matrix powers, to tropical inhomogeneous matrix products. The CSR terms for inhomogeneous matrix products are introduced, then a case is described where an inhomogenous product admits such CSR decomposition after some length and give a bound on this length. In the last part of the paper a number of counterexamples are presented to show that inhomogenous products do not admit CSR decomposition under more general conditions.    
\end{abstract}

\begin{keywords}
  max-plus algebra, matrix product, factor-rank, walk, matrix decompositions
\end{keywords}

\begin{AMS}
  15A80, 68R99, 16Y60, 05C20, 05C22, 05C25
\end{AMS}

\section{Introduction}
Tropical (max-plus) linear algebra is the linear algebra developed over the set $\Rmax=\mathbb{R}\cup\{ -\infty \}$ equipped with the additive operator $\oplus: a\oplus b = \max(a,b)$ and the multiplicative operator $\otimes: a \otimes b = a+b$. We will be working with the max-plus multiplication of matrices $A \otimes B$ defined by the operation
\begin{align*}
    (A \otimes B)_{i,j} = \bigoplus_{1 \leq k \leq n} a_{i,k} \otimes b_{k,j} = \max_{1\leq k \leq n}(a_{i,k}+b_{k,j})
\end{align*}
using two matrices $A=(a_{i,j})$ and $B=(b_{i,j})$ of appropriate sizes.

Consider the tropical dynamic system of equations given by
\begin{align*}
    x(0)& =x_{0} \\
    x(k)& =x(k-1) \otimes A_{k} \quad \text{ for }k\geq 1 \\ 
    x(k)& =x_{0} \otimes A_{1} \otimes \ldots \otimes A_{k} = x_{0} \otimes \Gamma(k).
\end{align*}
Here the matrices $A_{i}$ are taken in some unspecified order from a possibly infinite set of matrices $\mathcal{X}$. In practical terms, this represents a dynamical system where some accidental changes may occur over time. This has useful applications in modelling scheduling systems that are subject to change.

Much work has been done for the case where each matrix $A_{i}$ is the same for each step. Cohen et al.~\cite{CDQV-83,CDQV-85} were the first to observe that, under some mild conditions, the tropical powers $\{A^t\}_{t\geq 1}$ become periodic after a big enough time. A number of bounds on the transient of such periodicity were then obtained, in particular, by Hatmann and Arguelles~\cite{HA-99}, Akian et al.~\cite{AGW-05}, and Merlet et al.~\cite{MNS,MNSS}. In particular, Merlet et al.~\cite{MNS} offer an approach based on the CSR decompositions and CSR expansions of tropical matrix powers introduced by Sergeev and Schneider~\cite{Ser-09,SerSch}. Let us note that a preliminary version of such decompositions was introduced and studied before by Nachtigall~\cite{Nacht} and Moln\'{a}rov\'{a}~\cite{Mol-05}, and that similar decompositions appear in Akian et al.~\cite{AGW-05}.  

It is difficult to speak of ultimate periodicity in the case of inhomogeneous products. However, one can observe that CSR decompositions are an algebraic expression of turnpike phenomena occurring in tropical dynamical systems driven by one matrix. Namely, they express the fact that in such systems there are optimal trajectories (or walks) with a special structure: after a finite number of steps they arrive to a well-defined group of nodes called critical nodes, then dwell within that group of nodes, and then use a finite number of steps to reach the destination. The same phenomena will likely occur in inhomogenous products as well, but only under certain restrictive conditions. In particular, we can agree that all matrices constituting these inhomogeneous products have the same sets of critical nodes, and for a starter, we can consider the case where all these matrices have just one critical node. Under this and some other assumptions,  Shue et al.~\cite{Shue-98} found that products  $\Gamma(k)$ become tropical rank-$1$ matrices (i.e., tropical outer products) when $k$ is sufficiently big. Kennedy-Cochran-Patrick et al.~\cite{KCP-19} improved this result by giving a lower bound for $k$ to guarantee that $\Gamma(k)$ becomes a rank-$1$ matrix (i.e., a tropical outer product). In the present paper we show that the above results of~\cite{KCP-19,Shue-98} can be generalised further by introducing the {\em factor rank transient:} the length of the product after which the product is guaranteed to have a tropical factor rank not exceeding certain number, and by extending the concept of CSR decomposition to inhomogeneous products. Recall that tropical factor rank of a matrix $A$, studied together with many other concepts of rank in Akian et al.~\cite{AGG-09}, can be defined as follows: for a matrix $A\in\Rmax^{n\times m}$, the {\em tropical factor rank $r$} of $A$ is the smallest $r\in \mathbb{N}$ such that $A=U\otimes L$ where $U\in\Rmax^{n\times r}$ and $L\in\Rmax^{r\times m}$.
Note that the factor rank of $A$ is also equal to the minimum number of factor rank-$1$ matrices whose sum is equal to $A$, see~\cite{AGG-09}[Definition 7.1]. 

For wider reading, Hook~\cite{Hook17} shows that, by approximating the rank of the product in a min-plus setting, one can find and express the predominant structure in the associated digraph of the matrices forming the product. Hook has also looked at turnpike theory with respect to the max-plus linear systems in~\cite{Hook13}. In this paper he studies infinite length products, then uses a turnpike property to develop a factorisation of said matrix product. In terms of turnpikes, many results were obtained for them in the context of dynamic programming, in both discrete and continuous settings. Specifically, Kontorer and Yakovenko~\cite{Yakovenko92} used turnpike theory and Bellman equations to work with discrete optimal control problems. Following his work, Kolokoltsov and Maslov~\cite{IdempotentAnalysis} developed turnpike theory for discrete optimal control problems in the context of idempotent analysis and tropical mathematics.

The paper will proceed as follows. The first section will cover the necessary definitions and notation as well as a brief overview of~\cite{KCP-19} to give a more concrete background for the ensuing work. In~\cref{cpt:ambientcase} we look to generalise the work from~\cite{KCP-19} to a general case. For~\cref{cpt:nobound} we look at the cases where no lower bound can exist using counterexamples.

\section{Definitions and Notation}

\subsection{Weighted digraphs and tropical matrices}

This subsection presents some concepts and notation expressing the connection between tropical matrices and weighted digraphs. Monographs~\cite{MainBook,MPatWork} are our basic references for such definitions.

\begin{definition}[Weighted digraphs] \label{d:digraph}
A \emph{directed graph} \emph{(digraph)} is a~pair $(\nodes,\edges)$ where $\nodes$ is a~finite set of nodes and $\edges \subseteq \nodes \cross \nodes = \{(i,j)\colon i,j \in \nodes \}$ is the~set of edges, where $(i,j)$ is a~directed edge from node $i$ to node $j$.
	
A \emph{weighted digraph} is a~digraph with associated weights $w_{i,j} \in \Rmax$ for each edge $(i,j)$ in the~digraph.
	
A \emph{digraph associated with a~square matrix $A$} is a weighted digraph $\digr(A) = (\nodes_A, \edges_A)$ where the~set $\nodes_A$ has the same number of elements as the number of rows or columns in the~matrix $A$. The~set $\edges_A \subseteq \nodes_A \cross \nodes_A$ is the~set of edges in $\digr(A)$, where $(i,j)$ is an edge if and only if $a_{i,j}\neq -\infty$, and in this case the~weight of $(i,j)$ equals the~corresponding entry in the~matrix $A$, i.\,e. $w_{i,j} = a_{i,j} \in \Rmax$. 
\end{definition}

\begin{definition}[Walks, paths and weights] 
\label{d:walks}
A sequence of nodes $W=(i_0,\ldots,i_l)$ is called a~\emph{walk on a~weighted digraph} $\digr=(\nodes,\edges)$ if
	$(i_{s-1},i_s)\in \edges$ for each $s\colon 1\leq s \leq l$.
	This walk is a~\emph{cycle} if the start node $i_0$ and the~end node $i_l$ are the~same.
	It is a~\emph{path} if no two nodes in $i_0,\ldots, i_l$ are the~same. The~\emph{length} of $W$ is $l(W)=l$.\\  The~\emph{weight} of $W$ is defined as the~max-plus product (i.\,e., the~usual arithmetic sum) of the~weights of each edge $(i_{s-1},i_s)$ traversed throughout the walk, and it is denoted by $p_{\digr}(W)$. Note that a~sequence $W=(i_0)$ is also a~walk (without edges), and we assume that it has weight and length $0$.\\
	The \emph{mean weight} of $W$ is defined as the ratio $p_{\digr}(W)/l(W)$. 
\end{definition}

For a digraph, being strongly connected is a particularly useful property.

\begin{definition}[Strongly connected, irreducible, completely reducible]	
\label{d:scirred}
A digraph is \emph{strongly connected}, if for any two nodes $i$ and $j$ there exists a walk connecting $i$ to $j$. A square matrix is \emph{irreducible} if the graph associated with it in the sense of Definition~\ref{d:digraph} is strongly connected.

A digraph is called \emph{completely reducible}, if it consists of a number of strongly connected components, such that no two nodes of any two different components can be connected to each other by a walk.
\end{definition}

Note that, trivially, any strongly connected digraph is completely reducible. 

The following more refined notions are crucial in the study of ultimate periodicity of tropical matrix powers, and also for the present paper.

\begin{definition}[Cyclicity and cyclic classes] 
\label{d:cyclicity}
The~\emph{cyclicity} of a completely reducible digraph is the lowest common multiple of the highest common factors of the lengths of cycles within each strongly connected component. It will be denoted by $\gamma$. 

For two nodes $i,j\in\nodes$ we say that $i$ and $j$ are in the same~\emph{cyclic class} if there exists a walk of length modulo $\gamma$ connecting $i$ to $j$ or $j$ to $i$. This splits the set of nodes into $\gamma$ cyclic classes: $\cclass_0,\ldots, \cclass_{\gamma-1}$. The notation $\cclass_l\to_k \cclass_m$ means that some (and hence all) walks connecting nodes of $\cclass_l$ to nodes of $\cclass_m$ have lengths congruent to $k$ modulo $\gamma$.  The cyclic class containing $i$ will be also denoted by $[i]$.
\end{definition}

The correctness of the above definition of cyclic classes follows, for example, from \cite[Lemma 3.4.1]{BR:91}: in fact, every walk from $i$ to $j$
on $\digr$ has the same length modulo $\gamma$.

In tropical algebra, we often have to deal with two digraphs: 1) the digraph associated with $A$ and 2) the critical digraph of $A$. The latter digraph (being a subdigraph of the first) is defined below.

\begin{definition}[Maximum cycle mean and critical digraph]
For a square matrix $A$, the \emph{maximum cycle mean} of $\digr(A)$ denoted as $\lambda(A)$ (equivalently, the maximum cycle mean of $A$) is the biggest mean weight of all cycles of $\digr(A)$. 

A cycle in $\digr(A)$ is called \emph{critical} if its mean weight is equal to the maximum cycle mean (i.e., is maximal). 

The \emph{critical digraph} of $A$, denoted by $\crit(A)$, is the subdigraph of $\digr(A)$ whose node set $\critnodes$ and edge set $\critedges$ consist of all nodes and edges that belong to the critical cycles (i.e., that are critical). 
\end{definition}

Note that any critical digraph is completely reducible. As shown already in~\cite{CDQV-83,CDQV-85}, the cyclicity of critical digraph of $A$ is the ultimate period of the tropical matrix powers sequence $\{A^t\}_{t\geq 1}$, provided that $A$ is irreducible and $\lambda(A)=0$. See also Butkovi\v{c}~\cite{MainBook} and Sergeev~\cite{Ser-09} for more detailed analysis of the ultimate periodicity of this sequence.

Below we will use notation for walk sets and their maximal weights that is similar to that of Merlet~et~al.~\cite{MNS}.

\begin{definition}[Sets of walks]
\label{d:walkrep}
Let $\digr=(\nodes,\edges)$ be a weighted digraph and let $i,j\in\nodes$. The three sets $\walks{i}{j}{\digr}$, $\walkslen{i}{j}{k}{\digr}$ and $\walksnode{i}{j}{\digr}{\mathcal{N}}$, where $\mathcal{N} \subseteq \nodes$ is a subset of nodes, are defined as follows:
\begin{itemize}
    \item [] $\walks{i}{j}{\digr}$ is the set of walks over $\digr$ connecting $i$ to $j$;
    \item [] $\walkslen{i}{j}{k}{\digr}$ is the set of walks over $\digr$ of length $k$ connecting $i$ to $j$;
    \item [] $\walksnode{i}{j}{\digr}{\mathcal{N}}$ is the set of walks over $\digr$ connecting $i$ to $j$ that traverse at least one node of $\mathcal{N}$.
\end{itemize}
The supremum of the weights of walks in these sets will be denoted by $p(\mathcal{W})$.
\end{definition} 

\subsection{Main assumptions}
\label{ss:assumptions}

In this subsection, we set out the main assumptions about 
$\mathcal{X}$ and the matrices $A_{\alpha}$ that are drawn from this set and give some relevant definitions.

\begin{definition}[Geometrical equivalence]
Let the matrices $A$ and $B$ have their respective digraphs $\digr(A)=(\nodes_{A},\edges_{A})$ and $\digr(B)=(\nodes_{B},\edges_{B})$. We say that $A$ and $B$ are \emph{weakly geometrically equivalent} if $\nodes_{A}=\nodes_{B}$ and $\edges_{A}=\edges_{B}$, and they are \emph{strongly geometrically equivalent} if they are weakly geometrically equivalent and $\crit(A)=\crit(B)$.
\end{definition}

We cannot assume that the maximum cycle mean of each $A_{\alpha}\in\mathcal{X}$ is zero therefore we normalise each matrix to give the new set of matrices $\mathcal{Y}$, where 
\begin{align*}
    \mathcal{Y}=\{ A'_{\alpha}: A'_{\alpha}=A_{\alpha}\otimes\lambda^{-}(A_{\alpha})\;\forall A_{\alpha}\in\mathcal{X}\}.
\end{align*}

\begin{notation}[$A^{\sup}$ and $A^{\inf}$]
\label{not:asupinf}
\begin{itemize}
\item[] $A^{\sup}$: entrywise supremum of all matrices in $\mathcal{Y}$. In formula, $\Asup=\bigoplus_{\alpha\colon A_{\alpha}\in\mathcal{Y}} A_{\alpha}$. 
\item[] $\Ainf$: entrywise infimum of all matrices in $\mathcal{Y}$.
\end{itemize}
\end{notation}

Note that the concept of $\Asup$ has been used before for various purposes. In~\cite{MA-AHP}, Gursoy, Mason and Sergeev use the same definition to develop a common subeigenvector for the entire semigroup of matrices used to create $\Asup$, which is a technique we will use later on. In~\cite{Gursoy11}, Gursoy and Mason use $\Asup$, and $\lambda(\Asup)$ to develop bounds for the max-eigenvalues over a set of matrices.

\begin{asssumption}{A}{}
\label{as:irred}
Any matrix $A_{\alpha}\in \mathcal{X}$ is irreducible.
\end{asssumption}

\begin{asssumption}{B}{}
\label{as:geom}
Any two matrices $A_{\alpha}, A_{\beta}\in\mathcal{X}$ are strongly geometrically equivalent to each other and to 
$\Asup$, which has all entries in $\Rmax$.
\end{asssumption}

\begin{notation}
\label{not:digrcrit}
The common associated digraph of the matrices from $\mathcal{X}$ will be denoted by $\digr(\mathcal{X})=(\nodes,\edges)$, and the common critical digraph by $\crit(\mathcal{X})=(\critnodes,\critedges)$. In general, this critical digraph has $m\geq 1$ strongly connected components, denoted by $\crit_{\nu}$, for $\nu=1,\ldots,m.$

\end{notation}

\begin{asssumption}{C}{}
\label{as:infgeom}
Any matrix $A_{\alpha}\in\mathcal{X}$ is weakly geometrically equivalent to $\Ainf$. In other words, for each $(i,j) \in \edges$, we have  $(\Ainf)_{ij} \neq - \infty$. 
\end{asssumption}

\begin{asssumption}{D}{1}
\label{as:eigensup}
For the matrix $A^{sup}$, we have $\lambda(A^{sup})=0$.
\end{asssumption}

The first three assumptions come from the previous works by Shue et al.~\cite{Shue-98} and Kennedy-Cochran-Patrick~et~al.~\cite{KCP-19}: however, we will no longer assume that the critical graph consists just of one loop.

The final assumption below is inspired by the visualisation scaling studied in Sergeev~et~al~\cite{SSB}, see also~\cite{Ser-FP} and references therein for more background on this scaling. 

\begin{definition}[Visualisation]
Matrix $B$ is called a \emph{visualisation} of $A$ if there exists a diagonal matrix $X=\diag(x)$, with entries $X_{ii}=x_i$ on the diagonal and $X_{ij}=-\infty$ off the diagonal (i.e., if $i\neq j$), such that $B=X^{-1}AX$ and $B$ satisfies the following conditions: $B_{ij}=\lambda(B)$ for $(i,j)\in\critedges(B)$ and $B_{ij}\leq\lambda(B)$ for $(i,j)\notin\critedges(B)$.
\end{definition}

Once $\lambda(A)\neq -\infty$, a visualisation of $A$ always exists and, moreover, vectors $x$ providing a visualisation by means of diagonal matrix scaling $A\mapsto X^{-1}AX$ are precisely the tropical subeigenvectors of $A$, i.e., vectors satisfying $Ax\leq \lambda(A)x$. Using this information we have the following lemma.

\begin{lemma} \label{l:visscaling}
Suppose that the vector $x$ satisfies $\Asup x \leq x$. Then $x$ provides a simultaneous visualisation for all matrices of $\mathcal{X}$ (and $\mathcal{Y}$).
\end{lemma}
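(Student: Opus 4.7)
The plan is to reduce the claim directly to the characterisation of visualisations as subeigenvectors stated in the paragraph immediately preceding the lemma: once we know that $x$ is a tropical subeigenvector of a matrix $M$ at subeigenvalue $\lambda(M)$, the diagonal matrix $X=\diag(x)$ automatically provides a visualisation of $M$. Since $X$ is determined by $x$ alone, the same $X$ will serve every matrix in $\mathcal{X}$ and $\mathcal{Y}$, and the only thing that has to be checked for each individual matrix is the subeigenvector inequality with the correct subeigenvalue.

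First I would handle $\mathcal{Y}$. Each $A'_\alpha\in\mathcal{Y}$ is normalised so that $\lambda(A'_\alpha)=0$, and from the very definition of $\Asup$ as an entrywise supremum we have $A'_\alpha\leq \Asup$ entrywise. Combined with the hypothesis $\Asup x\leq x$ and the monotonicity of tropical matrix--vector multiplication this gives
\begin{equation*}
A'_\alpha x \;\leq\; \Asup x \;\leq\; x \;=\; \lambda(A'_\alpha)\otimes x,
\end{equation*}
so $x$ is a subeigenvector of $A'_\alpha$ at its maximum cycle mean, and therefore $X$ provides a visualisation of $A'_\alpha$. The matrix $\Asup$ itself is covered by exactly the same reasoning, since by Assumption~$\mathcal{D}$1 we have $\lambda(\Asup)=0$ and the hypothesis already reads $\Asup x\leq \lambda(\Asup)\otimes x$.

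For $\mathcal{X}$ I would then rescale: each $A_\alpha$ relates to the corresponding $A'_\alpha$ through $A_\alpha = \lambda(A_\alpha)\otimes A'_\alpha$, so tropically multiplying $A'_\alpha x\leq x$ through by the scalar $\lambda(A_\alpha)$ yields $A_\alpha x\leq \lambda(A_\alpha)\otimes x$, and the same conclusion applies. The one technical point worth verifying is that $x$ has all finite entries, so that $X^{-1}$ is actually defined; this follows because Assumptions~$\mathcal{A}$ and~$\mathcal{B}$ make $\Asup$ irreducible, and any subeigenvector of an irreducible tropical matrix at a finite subeigenvalue has no $-\infty$ components. I do not foresee a serious obstacle here: the argument is essentially bookkeeping on top of the subeigenvector/visualisation correspondence already recorded in the paper, with monotonicity of tropical matrix multiplication doing all the real work.
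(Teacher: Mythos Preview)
Your proposal is correct and follows essentially the same route as the paper: use $A'_\alpha\leq \Asup$ entrywise together with monotonicity of tropical matrix--vector multiplication to deduce $A'_\alpha x\leq x$, then invoke the subeigenvector characterisation of visualisation. If anything, your version is more complete than the paper's, since you spell out the rescaling that passes from $\mathcal{Y}$ back to $\mathcal{X}$ and note the finiteness of $x$, both of which the paper leaves implicit.
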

\begin{proof}
Let $x$ be the vector that satisfies $\Asup x \leq x$. By construction, $\Asup$ is the supremum matrix of all the normalised generators in $\mathcal{X}$. Therefore for these normalised generators $A_{\alpha}$, $A_{\alpha} \leq \Asup$. Hence the vector $x$ also satisfies $A_{\alpha} x\leq x$ and it can be used to visualise $A_{\alpha}$. As this applies for all $\alpha$ then they can be simultaneously visualised. As $\mathcal{Y}$ is the set of normalised matrices from $\mathcal{X}$ then the same applies to any matrix from $\mathcal{Y}$ as well.
\end{proof}

This is referred to as the set of matrices having a \emph{common visualisation}, therefore, without loss of generality we assume that we have performed this common visualisation on all of the matrices in $\mathcal{X}$ (and $\mathcal{Y}$) to give the final core assumption.

\begin{asssumption}{D}{2}
\label{as:vis}
For all $A_{\alpha}\in\mathcal{Y}$, we have $(A_{\alpha})_{ij}=0$ and $(\Asup)_{ij}=0$ for $(i,j)\in\critedges$, and $(A_{\alpha})_{ij}\leq 0$ and 
$(\Asup)_{ij}\leq 0$ for $(i,j)\notin\critedges$.
\end{asssumption}

From now on we will use Assumption~\ref{as:vis} instead of Assumption~\ref{as:eigensup} without loss of generality.

\subsection{Extension to inhomogeneous products}

Recall now that we have a set of matrices $\mathcal{Y}$,
from which we can select matrices in arbitrary sequence.

\begin{definition} \label{d:word}
The~\emph{word} associated with the matrix product $\Gamma(k)$ is the string of characters $i$ from $A_{i}\in\mathcal{Y}$ that make up said $\Gamma(k)$.
\end{definition}

Let us also introduce the trellis digraph associated with a matrix product 
$\Gamma(k)=A_1\otimes A_2\otimes\ldots\otimes A_k$ (as in~\cite{KCP-19}, inspired by Viterbi algorithm).

\begin{definition} \label{d:trellis}
The \emph{trellis digraph} $\trellis(\Gamma(k)) = (\trellisnodes, \trellisedges)$ associated with the~product $\Gamma(k)=A_1\otimes A_2\otimes\ldots\otimes A_k$ is the~digraph with the~set of nodes $\trellisnodes$ and the~set of edges $\trellisedges$, where:
	\begin{itemize}
		\item[(1)] $\trellisnodes$ consists of $k+1$ copies of
		$\nodes$ which are denoted $\nodes_0,\ldots, \nodes_k$, and the~nodes in $\nodes_l$ for each $0\leq l\leq k$ are denoted by $1:l,\ldots, n:l$;
		\item[(2)] $\trellisedges$ is defined by the following rules:
		\begin{itemize}
			\item[a)] there are edges only between $\nodes_l$ and $\nodes_{l+1}$ for each $l$,
			\item[b)] we have $(i:(l-1),j:l)\in \trellisedges$ if and only if $(i,j)$ is an edge of $\digr(\mathcal{Y})$, and the~weight of that edge is $(A_l)_{i,j}$.
		\end{itemize}
	\end{itemize}
	The weight of a walk $W$ on $\trellis(\Gamma(k))$ is denoted by $p_{\trellis}(W)$.
\end{definition}

Below we will need to use 1) walks that start at one side of the trellis and end at an intermediate node, 2) walks that start at an intermediate node and end at the other side of the trellis, 3) walks that connect one side of the trellis to the other. More formally, we give the following definition. 

\begin{definition} \label{d:walks2}
Consider a~trellis digraph $\trellis(\Gamma(k))$.
	
	By an~\emph{initial walk} connecting $i$ to $j$ on  $\trellis(\Gamma(k))$ we mean a~walk on $\trellis(\Gamma(k))$ connecting node $i:0$ to $j:m$, where  $0\leq m\leq k$. 
	
	By a~\emph{final walk} connecting $i$ to $j$ on  $\trellis(\Gamma(k))$ we mean a~walk on $\trellis(\Gamma(k))$  connecting node $i:l$ to $j:k$, where $0\leq l\leq k$. 
	
	A~\emph{full walk} connecting $i$ to $j$ on $\trellis(\Gamma(k))$ is a~walk on $\trellis(\Gamma(k))$ connecting node $i:0$ to $j:k$.
\end{definition}

We will mostly work with the following sets of walks on $\trellis$.

\begin{notation}[Walk sets on $\trellis(\Gamma(k))$]
\label{not:trelliswalks}
\begin{itemize}
    \item[] $\walkslen{i}{j}{k}{\trellis,\full},$ $\walkslen{i}{j}{l}{\trellis,\init}$ and $\walkslen{i}{j}{l}{\trellis,\final}:$  set of full walks (of length $k$), and sets of initial and final walks of length $l$ on $\trellis$ connecting $i$ to $j$. 
    \item[] $\walkslennode{i}{j}{k}{\trellis,\full}{\critnodes},$ $\walkslennode{i}{j}{l}{\trellis,\init}{\critnodes}$ and $\walkslennode{i}{j}{l}{\trellis,\final}{\critnodes}:$  set of full walks (of length $k$), and sets of initial and final walks of length $l$ on $\trellis$ traversing a critical node and connecting $i$ to $j$;
    \item[] $\walks{i}{\critnodes\|}{\trellis,\init}$: set of initial walks connecting $i$ to a node in $\critnodes$ so that this node of $\critnodes$ is the only node of $\critnodes$ that is visited by the walk and it is visited only once;
    \item[] $\walks{\|\critnodes}{j}{\trellis,\final}$: set of final walks connecting a node in $\critnodes$ to $j$ so that this node of $\critnodes$ is the only node of $\critnodes$ that is visited by the walk and it is visited only once.
\item[] $i\to_{\trellis} j:$ this denotes the situation where $i:0$ can be connected to $j:k$ on $\trellis$ by a full walk.
\end{itemize}
\end{notation}

Recall that $p(\mathcal{W})$ denotes the optimal weight of a walk in a set of walks $\mathcal{W}$.
The \emph{optimal walk interpretation} of entries of $\Gamma(k)$
in terms of walks on $\trellis=\trellis(\Gamma(k))$ is now apparent:
\begin{equation} \label{eq:walkrep}
    \Gamma(k)_{i,j} = p\left(\walkslen{i}{j}{k}{\trellis,\full}\right).
\end{equation}

 We will also need special notation for the optimal weights of walks in the sets $\walks{i}{\critnodes\|}{\trellis,\init}$ and $\walks{\|\critnodes}{j}{\trellis,\final}$ introduced above.
 
\begin{notation}[Optimal weights of walks on $\trellis(\Gamma(k))$]
\label{not:w*v*}
\begin{itemize}
    \item[] $w^{\ast}_{i,\critnodes}=p(\walks{i}{\critnodes\|}{\trellis,\init})$ : the maximal weight of walks in $\walks{i}{\critnodes\|}{\trellis,\init}$,
    \item[] $v^{\ast}_{\critnodes,j}=p(\walks{\|\critnodes}{j}{\trellis,\final})$ : the maximal weight of walks in  $\walks{\|\critnodes}{j}{\trellis,\final}$.
\end{itemize}
\end{notation}

The following notation is for optimal values of various optimisation problems involving paths and walks on $\digr(\Asup)$, $\digr(\Ainf)$, which will be used in our factor rank bounds.

\begin{notation}[Optimal weights of walks on $\digr(\Asup)$ and $\digr(\Ainf)$]
\label{not:weightssupinf}
\begin{itemize}
    \item[] $\alpha_{i,\critnodes}$ : the weight of the optimal path on $\digr(\Asup)$ connecting node $i$ to a node in $\critnodes$;
    \item[] $\beta_{\critnodes,j}$ : the weight of the optimal path on $\digr(\Asup)$ connecting a node in $\critnodes$ to node $j$;
    \item[] $\gamma_{i,j}$ : the weight of the optimal path on $\digr(\Asup)$ connecting node $i$ to node $j$ without traversing any node in $\critnodes$.
    \item[] $w_{i,\critnodes}$ : the weight of the optimal path on $\digr(\Ainf)$ connecting node $i$ to a node in $\critnodes$;
    \item[] $v_{\critnodes,j}$ : the weight of the optimal path on $\digr(\Ainf)$ connecting a node in $\critnodes$ to node $j$;
    \item[] $u^{k}_{i,j}$ : the weight of the optimal walk on $\digr(\Ainf)$ of length $k$ connecting node $i$ to node $j$.
\end{itemize}
\end{notation}

We remark by saying that the Kleene star, which is explored in~\cite{MainBook} and is defined as $(A)^{\ast}=I \oplus A \oplus A^{2} \oplus \ldots$, of $\Asup$ can be used to find the values of $\alpha_{i,\critnodes}$ and $\beta_{\critnodes,j}$. Similarly the Kleene star of $\Ainf$ can be used to find $w_{i,\critnodes}$ and $v_{\critnodes,j}$. Let us end this section with the following observation, which follows from the geometric equivalence (Assumptions~\ref{as:geom} and~\ref{as:infgeom})

\begin{lemma}
\label{l:elemequiv}
The following are equivalent:
{\rm (i)} $i\to_{\trellis} j$;
{\rm (ii)} $(\Gamma(k))_{i,j}>\tropzero$;
{\rm (iii)} $u^k_{i,j}>\tropzero$.
\end{lemma}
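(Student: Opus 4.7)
The plan is to prove the equivalence cyclically, observing that all three conditions simply assert the existence of a walk of length $k$ from $i$ to $j$ in the common underlying digraph, while the value being $>\tropzero$ is forced because the relevant matrix entries along any such walk are finite. The geometric equivalence assumptions (Assumptions~\ref{as:geom} and~\ref{as:infgeom}) are exactly what guarantees that ``edge existence'' in $\digr(A_\alpha)$, $\digr(\Asup)$, $\digr(\Ainf)$, and in each layer of $\trellis(\Gamma(k))$ all coincide with edges of the common digraph $\digr(\mathcal{X})=(\nodes,\edges)$.

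First I would show (i) $\Leftrightarrow$ (ii). By definition, $i\to_{\trellis} j$ means $\walkslen{i}{j}{k}{\trellis,\full}$ is nonempty. Any walk in this set is a sequence of nodes $(i=i_0,i_1,\dots,i_k=j)$ with $(i_{s-1},i_s)\in\edges$ for each $s$; since by Assumption~\ref{as:geom} every $A_\alpha\in\mathcal{Y}$ is weakly geometrically equivalent to $\Asup$ and the latter has entries in $\Rmax$ (hence every $(A_\alpha)_{i_{s-1},i_s}$ is a real number), the trellis weight of such a walk is finite. Using the walk representation~\eqref{eq:walkrep},
\[
    \Gamma(k)_{i,j}=p\!\left(\walkslen{i}{j}{k}{\trellis,\full}\right),
\]
so the set being nonempty yields a finite supremum, i.e.\ $(\Gamma(k))_{i,j}>\tropzero$. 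Conversely, if $\walkslen{i}{j}{k}{\trellis,\full}=\emptyset$ then the supremum is $\tropzero$ by convention, so $(\Gamma(k))_{i,j}=\tropzero$, contradicting (ii).

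Next I would show (i) $\Leftrightarrow$ (iii) by the same edge-set argument applied to $\digr(\Ainf)$. A walk of length $k$ on $\digr(\Ainf)$ connecting $i$ to $j$ is a node sequence $(i=i_0,\dots,i_k=j)$ with each $(i_{s-1},i_s)$ an edge of $\digr(\Ainf)$. By Assumption~\ref{as:infgeom}, $\digr(\Ainf)$ has exactly the edge set $\edges$ of $\digr(\mathcal{X})$ and, for every $(i,j)\in\edges$, $(\Ainf)_{ij}\in\mathbb{R}$. Thus the same node sequences parametrise both the full walks on $\trellis(\Gamma(k))$ from $i:0$ to $j:k$ and the length-$k$ walks on $\digr(\Ainf)$ from $i$ to $j$. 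Consequently the set of length-$k$ walks on $\digr(\Ainf)$ is nonempty iff $i\to_{\trellis} j$, and when nonempty its maximal weight $u^k_{i,j}$ is finite, so $u^k_{i,j}>\tropzero$.

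There is no real obstacle here: the entire content of the lemma is that the existence of a trellis walk, of a finite-weight product entry, and of a walk on $\digr(\Ainf)$ all reduce to the same combinatorial condition on the common edge set $\edges$, and this is exactly what the geometric equivalence Assumptions~\ref{as:geom} and~\ref{as:infgeom} were designed to encode. The only point requiring modest care is the convention that the supremum over an empty walk set equals $\tropzero$, which is needed to convert ``the walk set is nonempty'' into ``the optimal weight exceeds $\tropzero$''.
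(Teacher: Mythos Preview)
Your proof is correct and is precisely the argument the paper has in mind: the paper states the lemma as an immediate observation ``which follows from the geometric equivalence (Assumptions~\ref{as:geom} and~\ref{as:infgeom})'' without writing out details, and your proposal simply spells out that one-line justification carefully.
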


\section{CSR products}
\label{s:CSR}

In this section we introduce CSR decomposition of inhomogeneous products and study its properties. We will give the two definitions of the CSR decomposition of $\Gamma(k)$ and prove their equivalence. However in order to do that we require another definition.

\begin{definition} \label{d:ultimatep}
Let the matrix $A$ have cyclicity $\gamma$. The \emph{threshold of ultimate periodicity} of powers of $A$, is a bound $T(A)$ such that $\forall k \geq T(A)$, $A^{k}=A^{k+\gamma}$.
\end{definition}

This threshold is required to develop the CSR decomposition for $\Gamma(k)$ as seen in the following definitions.

\begin{definition}[CSR-1] 
\label{d:CSR1}
Let $\Gamma(k)=A_{1}\otimes\ldots\otimes A_{k}$ be a matrix product of length $k$. Define $C$, $S$ and $R$ as follows:
\begin{itemize}
    \item[] $S$ is the matrix associated with the 
    critical graph, i.e.
    \begin{equation}
        S=(s_{i,j})=\begin{cases} 0 & \text{if $(i,j)\in\critedges$} \ \\
        \tropzero & \text{otherwise.}
        \end{cases}
    \end{equation}
   \item[] Let $\gamma$ be the cyclicity of critical graph, and $t$ be a big enough number, such that $t\gamma\geq T(S)$, where $T(S)$ is the threshold of ultimate periodicity of (the powers of) $S$. 
    \item[] $C$ and $R$ are defined by the following formulae:
    \begin{equation*}
    C=\Gamma(k)\otimes S^{(t+1)\gamma-k\modd\gamma}, \quad R=S^{(t+1)\gamma-k\modd\gamma}\otimes\Gamma(k).    
    \end{equation*}
         \item[] The product of $C$, $S^{k\modd{\gamma}}$ and $R$ will be denoted by $CS^{k\modd{\gamma}}R[\Gamma(k)]$. We say that $\Gamma(k)$ is \emph{CSR} if  $CS^{k\modd{\gamma}}R[\Gamma(k)]$ is equal to $\Gamma(k)$. 
\end{itemize}
\end{definition}

For completeness we must also state that for any matrix in $A\in\Rmax^{n\times n}$, $A^{0}=I$, where $I$ is the tropical identity matrix, i.e. $I=\diag(0)$. In the next definition, we prefer to define CSR terms corresponding to the components of the critical graph.

\begin{definition}[CSR-2] 
\label{d:CSR2}
Let $\Gamma(k)=A_{1}\otimes\ldots\otimes A_{k}$ be a matrix product of length $k$, and let $\crit_{\nu}$, for $\nu=1,\ldots,m$ be the components of $\crit(\mathcal{Y})$. For each $\nu=1,\ldots,m$ define $C_{\nu}$, $S_{\nu}$ and $R_{\nu}$ as follows: 
\begin{itemize}
    \item[] $S_{\nu}\in\Rmax^{n \times n}$ is the matrix associated with the s.c.c. $\crit_{\nu}$ of the 
    critical graph, i.e.,
    \begin{equation}
        S_{\nu}=(s_{i,j})=\begin{cases} 0 & \text{if $(i,j)\in\crit_{\nu}$}, \ \\
        \tropzero & \text{otherwise.}
        \end{cases}
    \end{equation}
   \item[] Let $\gamma_{\nu}$ be the cyclicity of critical component, and $t_{\nu}$ be a big enough number, such that $t_{\nu}\gamma_{\nu}\geq T(S_{\nu})$, where $T(S_{\nu})$ is the threshold of ultimate periodicity of (the powers of) $S_{\nu}$. 
    \item[] $C_{\nu}$ and $R_{\nu}$ are defined by the following formulae:
    \begin{equation*}
    C_{\nu}=\Gamma(k)\otimes S_{\nu}^{(t_{\nu}+1)\gamma_{\nu}-k\modd{\gamma_{\nu}}}, \quad R_{\nu}=S_{\nu}^{(t_{\nu}+1)\gamma_{\nu}-k\modd{\gamma_{\nu}}}\otimes\Gamma(k).    
    \end{equation*}
         \item[] The product of $C_{\nu}$, $S_{\nu}^{k\modd{\gamma_{\nu}}}$ and $R_{\nu}$ will be denoted by $C_{\nu}S_{\nu}^{k\modd{\gamma_{\nu}}}R_{\nu}[\Gamma(k)]$. We say that $\Gamma(k)$ is \emph{CSR} if  
         $$\Gamma(k)=\bigoplus_{\nu=1}^m C_{\nu} S_{\nu}^{k\modd{\gamma_{\nu}}}R_{\nu}[\Gamma(k)].$$
\end{itemize}
\end{definition}

Using the definitions given above, we can write out the 
CSR terms more explicitly:
\begin{equation*}
\begin{split}
    CS^{k\modd{\gamma}}R[\Gamma(k)] & = \Gamma(k)\otimes S^{(t+1)\gamma -k\modd{\gamma}} \otimes S^{k\modd{\gamma}}\otimes S^{(t+1)\gamma -k\modd{\gamma}} \otimes \Gamma(k)\\
    & = \Gamma(k) \otimes S^{2(t+1)\gamma-k\modd\gamma} \otimes \Gamma(k),\\
     C_{\nu}S_{\nu}^{k\modd{\gamma_{\nu}}}R_{\nu}[\Gamma(k)] &= \Gamma(k) \otimes S_{\nu}^{2(t_{\nu}+1)\gamma_{\nu}-k\modd{\gamma_{\nu}}} \otimes \Gamma(k),
\end{split}
\end{equation*}
Since the powers of $S$ are ultimately periodic with period 
$\gamma$ and the powers of $S_{\nu}$ are ultimately periodic with period $\gamma_{\nu}$, and since also we have $t\gamma\geq T(S)$ and $t_{\nu}\gamma_{\nu}\geq T(S_{\nu})$, we can reduce the exponents of $S$ and $S_{\nu}$ to $(t+1)\gamma-k\modd{\gamma}$ and $(t_{\nu}+1)\gamma_{\nu}-k\modd{\gamma_{\nu}}$, respectively, and thus 
\begin{equation}
\label{e:CSRdirect}    
 \begin{split}
& CS^{k\modd{\gamma}}R[\Gamma(k)]=\Gamma(k)\otimes S^v\otimes \Gamma(k),\quad  C_{\nu}S_{\nu}^{k\modd{\gamma_{\nu}}}R_{\nu}[\Gamma(k)]=
  \Gamma(k)\otimes S_{\nu}^{v_{\nu}}\otimes \Gamma(k),\\
& \text{for}\  v=(t+1)\gamma-k\modd\gamma,\  v_{\nu}=(t_{\nu}+1)\gamma_{\nu}-k\modd{\gamma_{\nu}},\ t\gamma\geq T(S),\ t_{\nu}\gamma_{\nu}\geq T(S_{\nu}).
\end{split}
\end{equation}
Below we will also need the following elementary observation.

\begin{lemma}
\label{l:elem}
Let $v=(t+1)\gamma-k\modd\gamma$, where $t\gamma\geq T(S)$. 
Then, for any $\nu$, we can find $t_{\nu}$ such that  $v=(t_{\nu}+1)\gamma_{\nu}-k\modd{\gamma_{\nu}}$ and $t_{\nu}\gamma_{\nu}\geq T(S_{\nu})$. 
\end{lemma}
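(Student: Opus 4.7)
The plan is to exhibit $t_\nu$ explicitly, then check separately that it satisfies the modular identity and the lower bound. The construction needs two preliminary facts. First, $\gamma_\nu$ divides $\gamma$: by Definition~\ref{d:cyclicity}, $\gamma$ is the least common multiple of the cyclicities $\gamma_\nu$ of the strongly connected components of $\crit(\mathcal{Y})$. Second, $T(S_\nu)\leq T(S)$: since the node sets of the components $\crit_\nu$ are pairwise disjoint, any walk on the critical digraph stays inside a single component, so $S^k=\bigoplus_\nu S_\nu^k$ for every $k$; hence $S^k=S^{k+\gamma}$ is equivalent to the conjunction $S_\nu^k=S_\nu^{k+\gamma}$ over all $\nu$, and because each $S_\nu$ has ultimate period $\gamma_\nu\mid\gamma$ this conjunction coincides with ultimate periodicity of the individual $S_\nu$, giving $T(S)=\max_\nu T(S_\nu)$.

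I would then define $t_\nu$ by the equation $(t_\nu+1)\gamma_\nu := v+(k\modd{\gamma_\nu})$. To see that this produces a legitimate nonnegative integer, note that $v+(k\modd\gamma)=(t+1)\gamma$ is divisible by $\gamma$ and hence by $\gamma_\nu$, while $(k\modd\gamma)-(k\modd{\gamma_\nu})$ is a nonnegative multiple of $\gamma_\nu$ (namely $\lfloor(k\modd\gamma)/\gamma_\nu\rfloor\,\gamma_\nu$). Subtracting, $v+(k\modd{\gamma_\nu})$ is a nonnegative multiple of $\gamma_\nu$; it is strictly positive because $v\geq(t+1)\gamma-(\gamma-1)\geq 1$. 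So $t_\nu$ is well defined in $\mathbb{N}\cup\{0\}$, and the identity $v=(t_\nu+1)\gamma_\nu-(k\modd{\gamma_\nu})$ holds by construction.

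It remains to check $t_\nu\gamma_\nu\geq T(S_\nu)$. Writing $(k\modd\gamma)=j\gamma_\nu+(k\modd{\gamma_\nu})$ with $j\geq 0$ an integer, one computes
\[
t_\nu\gamma_\nu \;=\; v+(k\modd{\gamma_\nu})-\gamma_\nu \;=\; (t+1)\gamma-(j+1)\gamma_\nu.
\]
Since $j\gamma_\nu\leq k\modd\gamma\leq\gamma-1<\gamma$ and $\gamma_\nu\mid\gamma$, integrality of $\gamma/\gamma_\nu$ forces $j+1\leq\gamma/\gamma_\nu$, i.e.\ $(j+1)\gamma_\nu\leq\gamma$. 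Substituting yields $t_\nu\gamma_\nu\geq(t+1)\gamma-\gamma=t\gamma\geq T(S)\geq T(S_\nu)$, as required. The main (and essentially only) subtlety is the preliminary fact $T(S_\nu)\leq T(S)$, which one must justify from the block-diagonal structure of $S$ on the disjoint critical components; once that and $\gamma_\nu\mid\gamma$ are in hand, the remainder is bookkeeping with the two residues $k\modd\gamma$ and $k\modd{\gamma_\nu}$.
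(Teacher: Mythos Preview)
Your argument is correct and follows the same line as the paper's proof, which merely asserts that $\gamma_\nu\mid\gamma$ gives existence of $t_\nu$ and then writes the chain $t_\nu\gamma_\nu\geq t\gamma\geq T(S)\geq T(S_\nu)$ without further comment. You have simply carried out in full the bookkeeping (explicit definition of $t_\nu$, verification of integrality and of $(j+1)\gamma_\nu\leq\gamma$, and the justification of $T(S_\nu)\leq T(S)$ via the block decomposition $S^k=\bigoplus_\nu S_\nu^k$) that the paper leaves implicit.
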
 
 \begin{proof} 
 The existence of $t_{\nu}$ such that 
 $v=(t_{\nu}+1)\gamma_{\nu}-k\modd{\gamma_{\nu}}$
 follows since $\gamma$ is a multiple of $\gamma_{\nu}$, and then we also have 
 $t_{\nu}\gamma_{\nu}\geq t\gamma\geq T(S)\geq T(S_{\nu})$.
 \end{proof}
This lemma allows us to also write 
\begin{equation}
    \label{e:CSRdirect2}
  C_{\nu}S_{\nu}^{k\modd{\gamma_{\nu}}}R_{\nu}[\Gamma(k)]=
  \Gamma(k)\otimes S_{\nu}^v\otimes \Gamma(k),   
\end{equation}
with $v$ as in~\eqref{e:CSRdirect}.

\begin{proposition}~\label{p:definitionequiv}
$\Gamma(k)$ is CSR by Definition~\ref{d:CSR1} if and only if 
it is CSR by Definition~\ref{d:CSR2}.
\end{proposition}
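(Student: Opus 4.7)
The plan is to reduce both definitions to the same explicit matrix expression via the formulas already derived in~\eqref{e:CSRdirect} and~\eqref{e:CSRdirect2}, and then exploit the block-diagonal-like structure of the critical graph to show that powers of $S$ decompose componentwise.

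First, I would invoke Lemma~\ref{l:elem}: given any $t$ with $t\gamma \geq T(S)$, set $v=(t+1)\gamma-k\modd{\gamma}$. Then for every $\nu$ there exists $t_\nu$ with $t_\nu\gamma_\nu \geq T(S_\nu)$ and $v=(t_\nu+1)\gamma_\nu-k\modd{\gamma_\nu}$. Combining this with~\eqref{e:CSRdirect} and~\eqref{e:CSRdirect2}, the CSR-1 product becomes $CS^{k\modd{\gamma}}R[\Gamma(k)] = \Gamma(k)\otimes S^v \otimes \Gamma(k)$, and each CSR-2 term becomes $C_{\nu}S_{\nu}^{k\modd{\gamma_{\nu}}}R_{\nu}[\Gamma(k)]=\Gamma(k)\otimes S_\nu^v\otimes \Gamma(k)$, all with a single common exponent $v$. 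Note also that $v \geq 1$ always holds since $(t+1)\gamma - k\modd{\gamma} \geq t\gamma + 1 \geq 1$.

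The key identity to establish is
\begin{equation*}
S^v \;=\; \bigoplus_{\nu=1}^{m} S_\nu^v \qquad \text{for all } v\geq 1.
\end{equation*}
This is the main technical step, though not hard: since $\crit(\mathcal{Y})$ is completely reducible, its s.c.c.'s $\crit_\nu$ partition $\critnodes$ and there are no edges in $\critedges$ between distinct components. An entry $(S^v)_{ij}$ corresponds to the supremum of weights of walks of length $v$ on $\crit(\mathcal{Y})$ from $i$ to $j$; but every such walk (having at least one edge) is forced to remain entirely inside a single component $\crit_\nu$, and hence contributes only to $(S_\nu^v)_{ij}$. Since $S=\bigoplus_\nu S_\nu$, the reverse inclusion is immediate (walks on $\crit_\nu$ are walks on $\crit(\mathcal{Y})$). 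A short induction on $v$, or a direct walk-counting argument, closes this step.

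Finally, distributing tropical multiplication over $\oplus$ yields
\begin{equation*}
\Gamma(k)\otimes S^v\otimes \Gamma(k) \;=\; \bigoplus_{\nu=1}^{m} \Gamma(k)\otimes S_\nu^v\otimes \Gamma(k),
\end{equation*}
so the RHS of CSR-1 equals the RHS of CSR-2. Consequently $\Gamma(k)$ equals one if and only if it equals the other, proving the equivalence. The main obstacle is just the structural observation about powers of $S$; the rest is algebraic bookkeeping handled by~\eqref{e:CSRdirect}, \eqref{e:CSRdirect2} and Lemma~\ref{l:elem}.
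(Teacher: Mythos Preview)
Your proposal is correct and follows essentially the same approach as the paper: both reduce the claim via~\eqref{e:CSRdirect} and~\eqref{e:CSRdirect2} to showing $S^v=\bigoplus_{\nu} S_\nu^v$, which the paper obtains from $S=\bigoplus_\nu S_\nu$ together with $S_{\nu_1}\otimes S_{\nu_2}=-\infty$ for $\nu_1\neq\nu_2$, while you phrase the same fact via walks confined to a single component. Your explicit remark that $v\geq 1$ is a nice touch, since the identity $S^v=\bigoplus_\nu S_\nu^v$ would fail for $v=0$.
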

\begin{proof}
We need to show that
\begin{equation}
\label{e:CSRexp}
   CS^{k\modd{\gamma}}R[\Gamma(k)]=\bigoplus_{\nu=1}^m C_{\nu} S_{\nu}^{k\modd{\gamma_{\nu}}}R_{\nu}[\Gamma(k)]
\end{equation}
for arbitrary $k$.
Using~\eqref{e:CSRdirect} and~\eqref{e:CSRdirect2} we can rewrite this equivalently as 
\begin{equation}
\label{e:gammakequal}
    \Gamma(k) \otimes S^{(t+1)\gamma-k\modd{\gamma}} \otimes \Gamma(k) = \Gamma(k) \otimes \left( \bigoplus_{\nu=1}^{m} S_{\nu}^{(t+1)\gamma-k\modd{\gamma}} \right) \otimes \Gamma(k)
\end{equation}
with $t\gamma\geq T(S)$. To obtain this equality, observe that $S=\bigoplus_{\nu=1}^{m}S_{\nu}$, and as $S_{\nu_1}\otimes S_{\nu_2}=-\infty$ for any $\nu_1$ and $\nu_2$ we can raise both sides to the same power to give us $S^{t}=\bigoplus_{\nu=1}^{m}S_{\nu}^{t}$ for any $t$.
This shows~\eqref{e:gammakequal}, and the claim follows.
\end{proof}
For a similar reason, we also have the following identities:
\begin{equation}
\label{e:CRdecomp}
\begin{split}
C&=\bigoplus_{\nu=1}^m C_\nu,\qquad R=\bigoplus_{\nu=1}^m
R_\nu,\\
C\otimes S^{k\modd{\gamma}}&=\bigoplus_{\nu=1}^m C_{\nu}\otimes S_{\nu}^{k\modd{\gamma_{\nu}}},\quad S^{k\modd\gamma}\otimes R=\bigoplus_{\nu=1}^m S_{\nu}^{k\modd{\gamma_{\nu}}}\otimes R_{\nu}.
\end{split}    
\end{equation}

To give an optimal walk interpretation of CSR, we will need to define the trellis graph corresponding to these terms, by modifying Definition~\ref{d:trellis}.

\begin{definition}[Symmetric extension of the trellis graph]
\label{d:trellisext}
Let $v=(t+1)\gamma -k\modd{\gamma}$, where $t$ be a large enough number such that $t\gamma\geq T(S)$.\\ 
Define $\trellis'(\Gamma(k))$ as the digraph $\trellis'=(\trellisnodes',\trellisedges')$ with the set of nodes $\trellisnodes'$ and edges $\trellisedges'$, such that:
\begin{itemize}
    \item[(1)] $\trellisnodes'$ consists of $2k+v+1$ copies of $\nodes$ which are denoted $\nodes_{0},\ldots,\nodes_{2k+v}$ and the nodes for $\nodes_{l}$ for each $0\leq l \leq 2k+v$ are denoted by $1:l,\ldots,n:l$;
    \item[(2)] $\trellisedges'$ is defined by the following rules:
    \begin{itemize}
        \item[a)] there are edges only between $\nodes_{l}$ and $\nodes_{l+1}$,
        \item[b)] for $1\leq l \leq k$ we have $(i:l-1,j:l)\in \trellisedges'$ if and only if $(i,j)\in\edges(\mathcal{Y})$ and the weight of the edge is $(A_{l})_{i,j}$,
        \item[c)] for $k+v+1\leq l \leq 2k+v$ we have $(i:l-1,j:l)\in \trellisedges'$ if and only if $(i,j)\in\edges(\mathcal{Y})$ and the weight of the edge is $(A_{l-k-v})_{i,j}$,
        \item[d)] for $k < l < k+v+1$ we have $(i:l-1,j:l)\in \trellisedges'$ if and only if $(i,j)\in\crit(\mathcal{Y})$ and the weight of the edge is $0$.
    \end{itemize}
\end{itemize}
The weight of a walk on $\trellis'(\Gamma(k))$ is denoted by $p_{\trellis'}(W)$.
\end{definition}
The following optimal walk interpretation of CSR terms on $\trellis'$ is now obvious.

\begin{lemma}[CSR and optimal walks] \label{r:CSRrep}
The following identities hold for all $i,j$
\begin{equation}
\begin{split}
\label{e:optwalkCSR}
(CS^{k\modd{\gamma}}R[\Gamma(k)])_{i,j}
& = p\left(\walkslen{i}{j}{2k+v}{\trellis',\full}\right),\\
(C_{\nu}S_{\nu}^{k\modd{\gamma_{\nu}}}R_{\nu}[\Gamma(k)])_{i,j}
& = p\left(\walkslennode{i}{j}{2k+v}{\trellis',\full}{\critnodes^{\nu}}\right),
\end{split}
\end{equation}
where $v=(t+1)\gamma-k\modd{\gamma},$ with $t\gamma\geq T(S)$.
\end{lemma}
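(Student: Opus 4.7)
The plan is to reduce both identities to the triple-product forms already established in~\eqref{e:CSRdirect} and~\eqref{e:CSRdirect2}, and then unpack each matrix entry by combining the tropical product rule with the walk representation~\eqref{eq:walkrep} applied segment-by-segment on $\trellis'$. Concretely, I would begin by invoking~\eqref{e:CSRdirect} and~\eqref{e:CSRdirect2} to rewrite the two quantities on the left-hand sides as $(CS^{k\modd{\gamma}}R[\Gamma(k)])_{i,j}=(\Gamma(k)\otimes S^v\otimes\Gamma(k))_{i,j}$ and $(C_{\nu}S_{\nu}^{k\modd{\gamma_{\nu}}}R_{\nu}[\Gamma(k)])_{i,j}=(\Gamma(k)\otimes S_{\nu}^v\otimes\Gamma(k))_{i,j}$, and then expand each triple product as $\max_{p,q}\bigl[(\Gamma(k))_{i,p}+(S^v)_{p,q}+(\Gamma(k))_{q,j}\bigr]$, respectively with $S_{\nu}^v$ in place of $S^v$.

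The second step is to match the three factors with walks on the three consecutive segments of $\trellis'$. By rules (2b) and (2c) of Definition~\ref{d:trellisext}, the layers $\nodes_{0},\ldots,\nodes_{k}$ and $\nodes_{k+v},\ldots,\nodes_{2k+v}$ form two copies of the trellis of $\Gamma(k)$, so~\eqref{eq:walkrep} identifies $(\Gamma(k))_{i,p}$ with the optimal weight on the first segment from $i:0$ to $p:k$, and $(\Gamma(k))_{q,j}$ with the optimal weight on the third segment from $q:k+v$ to $j:2k+v$. By rule (2d), the middle segment $\nodes_{k},\ldots,\nodes_{k+v}$ contains exactly the critical edges with weight $0$, so $(S^v)_{p,q}$ is the maximal weight of a length-$v$ walk across the middle segment from $p:k$ to $q:k+v$, and $(S_{\nu}^v)_{p,q}$ restricts this to the subdigraph $\crit_{\nu}$. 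Because edges of $\trellis'$ only run between consecutive layers, every full walk from $i:0$ to $j:2k+v$ factors uniquely as a concatenation of three segment-walks; taking $\max_{p,q}$ of the sum of their weights therefore equals $p(\walkslen{i}{j}{2k+v}{\trellis',\full})$, which gives the first identity.

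For the second identity, the same concatenation argument shows that the walks contributing to $(\Gamma(k)\otimes S_{\nu}^v\otimes\Gamma(k))_{i,j}$ are precisely those full walks whose middle segment lies entirely in $\crit_{\nu}$, and any such walk traverses a node of $\critnodes^{\nu}$ in that middle segment. The step I expect to need a little care is the matching between ``full walks traversing $\critnodes^{\nu}$'' (the literal reading of $\walkslennode{i}{j}{2k+v}{\trellis',\full}{\critnodes^{\nu}}$) and ``full walks whose middle segment lies in $\crit_{\nu}$''. The critical digraph is completely reducible, so the middle segment of any full walk on $\trellis'$ is constrained to a single component $\crit_{\nu'}$; I would exploit this to argue that the optimal walk traversing $\critnodes^{\nu}$ may be taken with its middle segment in $\crit_{\nu}$, or equivalently clarify the notation so that the required traversal is placed in the middle segment. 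Once this bookkeeping is settled, the identity falls out of the triple-product expansion exactly as in the first case.
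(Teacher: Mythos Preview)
Your proposal is correct and follows the same approach as the paper: reduce to the triple-product forms $\Gamma(k)\otimes S^v\otimes\Gamma(k)$ and $\Gamma(k)\otimes S_{\nu}^v\otimes\Gamma(k)$ via~\eqref{e:CSRdirect} and~\eqref{e:CSRdirect2}, then read off the walk interpretation segment-by-segment on $\trellis'$. The paper's own proof is a two-line sketch saying exactly this, so your expansion is faithful and more detailed than the original.

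Your flagged concern about the second identity is legitimate and worth a comment. As you observe, the literal reading of $\walkslennode{i}{j}{2k+v}{\trellis',\full}{\critnodes^{\nu}}$ allows the visit to $\critnodes^{\nu}$ to occur in the first or third segment, while the walks arising from $\Gamma(k)\otimes S_{\nu}^v\otimes\Gamma(k)$ are exactly those whose middle segment lies in $\crit_{\nu}$. The paper does not address this discrepancy here, but its later uses of the identity (notably in the proof of Proposition~\ref{p:CSRcrit}, where a walk in this set is decomposed at a middle-segment layer and the node there is assumed to lie in $\critnodes^{\nu}$) make clear that the intended meaning is the middle-segment interpretation. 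So of your two proposed resolutions, the notational clarification is the one the paper implicitly adopts; your alternative route of proving that an optimal walk traversing $\critnodes^{\nu}$ anywhere can be replaced by one with middle segment in $\crit_{\nu}$ is not needed, and indeed would require a separate argument that the paper never supplies.
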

\begin{proof}
With~\eqref{e:CSRdirect}, the first identity follows from the optimal walk interpretation of $\Gamma(k)\otimes S^v\otimes \Gamma(k)$, and the second identity follows from~\eqref{e:CSRdirect2} and the optimal walk interpretation of 
$\Gamma(k)\otimes S_{\nu}^v\otimes\Gamma(k)$.
\end{proof}

In what follows, we mostly work with Definition~\ref{d:CSR2}, but we can switch between the equivalent definitions if we find it convenient.

We now present a useful lemma that shows equality for columns of $C_{\nu}$ and rows of $R_{\nu}$ with indices in the same cyclic class.

\begin{lemma} \label{l:cyclicequality}
For any $i$ and for any two nodes $x$ and $y$ in the same cyclic class of the critical component $\crit_{\nu}$ we have
\begin{equation}
    (C_{\nu})_{i,x} = (C_{\nu})_{i,y} \quad \text{ and } \quad (R_{\nu})_{x,i} = (R_{\nu})_{y,i}
\end{equation}
\end{lemma}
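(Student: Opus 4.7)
The plan is to reduce the claim to a structural property of the high matrix power $S_\nu^w$, where $w=(t_\nu+1)\gamma_\nu - k\modd{\gamma_\nu}$. By Definition~\ref{d:CSR2} we have $C_\nu=\Gamma(k)\otimes S_\nu^w$, and hence for every $i$ and every column index $x$,
\[
(C_\nu)_{i,x}=\bigoplus_z \Gamma(k)_{i,z}\otimes (S_\nu^w)_{z,x}.
\]
It therefore suffices to show that $(S_\nu^w)_{z,x}=(S_\nu^w)_{z,y}$ for every $z$, whenever $x$ and $y$ lie in the same cyclic class of $\crit_\nu$.

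To establish this, I would exploit the fact that $S_\nu$ is a $0/\tropzero$ matrix whose support is exactly the edge set of the strongly connected component $\crit_\nu$; consequently, any entry of $S_\nu^w$ equals $0$ if there exists a walk of length $w$ in $\crit_\nu$ between the two indices, and equals $\tropzero$ otherwise. Since $w\geq t_\nu\gamma_\nu\geq T(S_\nu)$, the power $S_\nu^w$ lies in the ultimately periodic regime, so combined with the strong connectedness and cyclicity $\gamma_\nu$ of $\crit_\nu$, one obtains the standard characterisation: $(S_\nu^w)_{z,x}=0$ precisely when $z,x$ both lie in $\crit_\nu$ and $[x]$ is the cyclic class obtained from $[z]$ by the shift of $w$ modulo $\gamma_\nu$, and $(S_\nu^w)_{z,x}=\tropzero$ otherwise.

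This criterion depends on $x$ only through its cyclic class $[x]$. Hence, if $x$ and $y$ lie in the same cyclic class of $\crit_\nu$, then $(S_\nu^w)_{z,x}=(S_\nu^w)_{z,y}$ for every $z$, and the equality $(C_\nu)_{i,x}=(C_\nu)_{i,y}$ follows directly from the formula above. The statement for $R_\nu$ is proved by the symmetric argument starting from $R_\nu=S_\nu^w\otimes\Gamma(k)$, which gives $(R_\nu)_{x,i}=\bigoplus_z (S_\nu^w)_{x,z}\otimes\Gamma(k)_{z,i}$, and invoking the analogous dependence of $(S_\nu^w)_{x,z}$ on the row index $x$ through its cyclic class alone.

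The main hurdle is pinning down the cyclic-class criterion for the non-$\tropzero$ entries of $S_\nu^w$: it rests on the fact noted after Definition~\ref{d:cyclicity} that in a strongly connected digraph with cyclicity $\gamma_\nu$ all walks between two given nodes have lengths in a single residue class modulo $\gamma_\nu$, combined with the observation that $w\geq T(S_\nu)$ guarantees the actual existence of walks of length exactly $w$ whenever the residue class condition is satisfied. Once this is in place, the remainder of the proof is immediate.
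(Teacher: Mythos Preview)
Your proof is correct and follows essentially the same approach as the paper: both arguments reduce the claim to the fact that, for $w=(t_\nu+1)\gamma_\nu-k\modd{\gamma_\nu}\geq T(S_\nu)$, the entry $(S_\nu^w)_{z,x}$ depends on $x$ only through its cyclic class in $\crit_\nu$. The paper phrases this via the optimal-walk decomposition on the extended trellis $\trellis'$ (splitting a walk into a full walk on $\trellis$ from $i$ to some $s\in\critnodes^\nu$ followed by a length-$w$ walk on $\crit_\nu$), whereas you express the same decomposition directly through the matrix product $C_\nu=\Gamma(k)\otimes S_\nu^w$; the underlying idea is identical.
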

\begin{proof}
We prove the lemma for columns, as the case of the rows is similar. 

For any $i,j$, denote $(C_{\nu})_{i,j}$ by $c_{i,j}$. From the definition of $C_{\nu}$, it follows that $c_{i,x}$ is the weight of an optimal walk in $\walkslennode{i}{j}
{k+(t_{\nu}+1)\gamma_{\nu}-k\modd{\gamma_{\nu}}}{\trellis',\init}{\critnodes^{\nu}}$ where $t_{\nu}\gamma_{\nu}\geq T(S_{\nu})$, and such walk consists of two parts. The first part is a full walk on $\trellis$ connecting $i$ to the critical subgraph at some node $s$.  The second part is a walk over the critical subgraph of length $(t_{\nu}+1)\gamma_{\nu} -k\modd{\gamma_{\nu}}$ connecting $s$ to $x$ with weight zero. As the length of the second walk is greater than $T(S_{\nu})$, a walk connecting $s$ to $x$ exists if and only if $[s]\to_{-k\modd{\gamma_{\nu}}}[x]$. If a full walk connecting $i$ to $[s]$ on $\trellis$ exists then, for arbitrary $x,y$ in the same cyclic class,  $c_{i,x}$ and $c_{i,y}$ are both equal to the optimal weight of all walks connecting $i$ to $[s]$ on $\trellis$,  where $[s]\to_{-k\modd{\gamma_{\nu}}}[x]$, otherwise both $c_{i,x}$ and $c_{i,y}$ are equal to $-\infty$. This shows that $c_{i,x}=c_{i,y}$. 

The case of rows of $R_{\nu}$ is considered similarly, but instead of initial walks one has to use final walks on $\trellis'$.
\end{proof}

We can use this to prove the same property for $C$ and $R$ of Definition~\ref{d:CSR1}.

\begin{corollary} \label{c:cyclicequalityfull}
For any $i$ and for any two nodes $x$ and $y$ in the same critical component and the same cyclic class of said critical component, we have
\begin{equation}
    C_{i,x} = C_{i,y} \quad \text{ and } \quad R_{x,i} = R_{y,i}
\end{equation}
\end{corollary}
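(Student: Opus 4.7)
The plan is to derive the corollary directly from Lemma~\ref{l:cyclicequality} together with the decomposition identities $C=\bigoplus_{\mu=1}^m C_\mu$ and $R=\bigoplus_{\mu=1}^m R_\mu$ from~\eqref{e:CRdecomp}. The key auxiliary observation is that $(C_\mu)_{i,z}=\tropzero$ whenever $z$ does not lie in the critical component $\crit_\mu$, and symmetrically $(R_\mu)_{z,i}=\tropzero$ in that case.

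First I would fix $i$ and two nodes $x,y$ belonging to the same cyclic class of some critical component $\crit_\nu$. By~\eqref{e:CRdecomp},
\begin{equation*}
C_{i,x}=\bigoplus_{\mu=1}^m (C_\mu)_{i,x},\qquad C_{i,y}=\bigoplus_{\mu=1}^m (C_\mu)_{i,y}.
\end{equation*}
Next I would justify the auxiliary observation. Recall that $C_\mu=\Gamma(k)\otimes S_\mu^{v_\mu}$ with $v_\mu=(t_\mu+1)\gamma_\mu-k\bmod\gamma_\mu\geq 1$. Since $S_\mu$ is supported on the edges of $\crit_\mu$ and the components $\crit_1,\dots,\crit_m$ are pairwise disjoint and mutually disconnected, any positive power $S_\mu^{v_\mu}$ has $(S_\mu^{v_\mu})_{s,z}=\tropzero$ unless both $s$ and $z$ belong to $\crit_\mu$. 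Hence for $z\notin\crit_\mu$ we get $(C_\mu)_{i,z}=\bigoplus_s\Gamma(k)_{i,s}\otimes(S_\mu^{v_\mu})_{s,z}=\tropzero$.

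Since $x,y\in\crit_\nu$ and the critical components are disjoint, the previous step yields $(C_\mu)_{i,x}=(C_\mu)_{i,y}=\tropzero$ for every $\mu\neq\nu$. For $\mu=\nu$, Lemma~\ref{l:cyclicequality} applies and gives $(C_\nu)_{i,x}=(C_\nu)_{i,y}$. Combining these we obtain
\begin{equation*}
C_{i,x}=(C_\nu)_{i,x}=(C_\nu)_{i,y}=C_{i,y},
\end{equation*}
as desired. The statement for $R$ follows by the symmetric argument, replacing initial walks with final walks and using the dual identity $R=\bigoplus_\mu R_\mu$.

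There is no real obstacle here; the only subtlety worth checking carefully is that the exponent $v_\mu$ is strictly positive (so that the support of $S_\mu^{v_\mu}$ genuinely lies inside $\crit_\mu\times\crit_\mu$), which follows immediately from $(t_\mu+1)\gamma_\mu\geq\gamma_\mu>k\bmod\gamma_\mu$.
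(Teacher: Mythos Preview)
Your proof is correct and follows essentially the same approach as the paper: use the decomposition $C=\bigoplus_\mu C_\mu$ from~\eqref{e:CRdecomp}, observe that $(C_\mu)_{i,x}=\tropzero$ for $\mu$ different from the component containing $x$, and then apply Lemma~\ref{l:cyclicequality} to the remaining term. Your explicit verification that the exponent $v_\mu$ is strictly positive is a nice touch that the paper leaves implicit.
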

\begin{proof}
We will prove only the first identity, as the proof of the second identity is similar.
Let $x,y$ belong to the same component $\crit_{\mu}$ of $\crit(\mathcal{Y})$, and let them belong to the same cyclic class of that component. By Lemma~\ref{l:cyclicequality}
we have $(C_{\mu})_{i,x}=(C_{\mu})_{i,y}$, and we also have $(C_{\nu})_{i,x}=(C_{\nu})_{i,y}=\tropzero$ for any $\nu\neq\mu$.
Using these identities and~\eqref{e:CRdecomp},  we have
\begin{align*}
     C_{i,x}=
     \left(\bigoplus_{\nu=1}^{m} C_{\nu}\right)_{i,x} = (C_{\mu})_{i,x}=(C_{\mu})_{i,y}=
     \left(\bigoplus_{\nu=1}^{m} C_{\nu}\right)_{i,y}
     =C_{i,y}.
     \end{align*}
\end{proof}

The next theorem explains why CSR is useful for inhomogeneous products. Note that in the proof of it we use the CSR structure rather than the $\Gamma(k)\otimes S^v\otimes \Gamma(k)$ representation that was used above.

\begin{theorem}
\label{t:CSRrank}
 The factor rank of each $C_{\nu}S_{\nu}^{k\modd{\gamma_{\nu}}}R_{\nu}[\Gamma(k)]$ is no more than $\gamma_{\nu}$, for $\nu=1,\ldots,m$, and the factor rank of $CS^{k\modd{\gamma}}R[\Gamma(k)]$ is no more than $\sum_{\nu=1}^m \gamma_{\nu}$.
\end{theorem}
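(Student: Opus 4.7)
The plan is to exploit Lemma~\ref{l:cyclicequality} to produce an explicit rank-$\gamma_\nu$ factorisation of each term $C_\nu S_\nu^{k\modd{\gamma_\nu}} R_\nu[\Gamma(k)]$, and then use the characterisation of factor rank as the minimum number of rank-$1$ summands to bound the factor rank of $CS^{k\modd{\gamma}}R[\Gamma(k)]$.

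First I would observe that for every $x\notin\critnodes^\nu$, column $x$ of $S_\nu^{(t_\nu+1)\gamma_\nu-k\modd{\gamma_\nu}}$ is identically $\tropzero$, because the support of $S_\nu$ (and hence of all its positive powers) is confined to $\crit_\nu$, and the exponent $(t_\nu+1)\gamma_\nu-k\modd{\gamma_\nu}\geq 1$. Consequently column $x$ of $C_\nu=\Gamma(k)\otimes S_\nu^{(t_\nu+1)\gamma_\nu-k\modd{\gamma_\nu}}$ is also identically $\tropzero$. Combined with Lemma~\ref{l:cyclicequality}, this says that $C_\nu$ has at most $\gamma_\nu$ distinct non-$\tropzero$ columns: one per cyclic class of $\crit_\nu$, of which there are exactly $\gamma_\nu$.

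I would then quantify this by picking a representative $z_s$ from each cyclic class of $\crit_\nu$ (for $s=0,\ldots,\gamma_\nu-1$) and defining $U_\nu\in\Rmax^{n\times\gamma_\nu}$ to be the matrix whose $s$-th column is the $z_s$-th column of $C_\nu$, together with $V_\nu\in\Rmax^{\gamma_\nu\times n}$ defined by $(V_\nu)_{s,x}=0$ if $x$ lies in the cyclic class of $z_s$ and $(V_\nu)_{s,x}=\tropzero$ otherwise. A direct calculation using Lemma~\ref{l:cyclicequality} and the column observation above gives $C_\nu=U_\nu\otimes V_\nu$, hence
\begin{equation*}
C_\nu S_\nu^{k\modd{\gamma_\nu}} R_\nu[\Gamma(k)] \;=\; U_\nu \otimes \bigl(V_\nu\otimes S_\nu^{k\modd{\gamma_\nu}}\otimes R_\nu\bigr),
\end{equation*}
where the left factor is $n\times\gamma_\nu$ and the right factor is $\gamma_\nu\times n$. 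This proves the first half of the theorem.

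For the second half I would invoke the characterisation (recalled in the introduction after~\cite{AGG-09}[Definition 7.1]) that the factor rank of a matrix equals the minimum number of factor rank-$1$ matrices summing tropically to it. By Proposition~\ref{p:definitionequiv},
\begin{equation*}
    CS^{k\modd{\gamma}}R[\Gamma(k)]=\bigoplus_{\nu=1}^{m} C_\nu S_\nu^{k\modd{\gamma_\nu}} R_\nu[\Gamma(k)],
\end{equation*}
and since each summand has just been exhibited as a tropical sum of at most $\gamma_\nu$ rank-$1$ matrices, the whole right-hand side is a tropical sum of at most $\sum_{\nu=1}^{m}\gamma_\nu$ rank-$1$ matrices, giving the claimed bound. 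I do not expect any serious obstacle here: the structural heavy lifting was already done by Lemma~\ref{l:cyclicequality}, and the rest is an explicit factorisation together with the subadditivity of factor rank with respect to $\oplus$.
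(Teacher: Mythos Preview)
Your proof is correct and follows essentially the same approach as the paper: both arguments use Lemma~\ref{l:cyclicequality} to show that $C_\nu$ has at most $\gamma_\nu$ distinct non-$\tropzero$ columns (indexed by cyclic classes of $\crit_\nu$), and then build an explicit rank-$\gamma_\nu$ factorisation from representatives of those classes. The only notable difference is in the second half: the paper stays constructive, assembling $C'=\bigoplus_\nu C'_\nu$ and $R'=\bigoplus_\nu R'_\nu$ and using the disjointness $C'_{\nu_1}\otimes R'_{\nu_2}=\tropzero$ for $\nu_1\neq\nu_2$ to exhibit $CS^{k\modd{\gamma}}R[\Gamma(k)]=C'\otimes R'$ directly, whereas you appeal to the characterisation of factor rank as the minimum number of rank-$1$ summands to get subadditivity under $\oplus$; both routes are valid and yield the same bound.
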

\begin{proof}


For each $\nu=1,\ldots,m,$ take all the nodes from $\subcrit_{\nu}$ and order them into cyclic classes $\cclass^\nu_{0},\ldots,\cclass^\nu_{\gamma_{\nu}-1}$. 
Take two columns with indices $x,y\in\cclass^{\nu}_{i}$ from the matrix $C_{\nu}$. As they are in the same cyclic class, by Lemma~\ref{l:cyclicequality} the columns are equal to each other. This means that we can take a column representing a single node from each cyclic class and since there are $\gamma_{\nu}$ distinct classes then there will be $\gamma_{\nu}$ distinct columns of $C_{\nu}$. The same also holds for any two rows of $R_{\nu}$: if the row indices are in the same cyclic class, then the rows are equal, so that we have $\gamma_{\nu}$ distinct rows.

Let us now check that the same holds for $S_{\nu}^{k\modd{\gamma_{\nu}}}\otimes R_{\nu}$. By the construction of $S_{\nu}^{k\modd{\gamma_{\nu}}}$ we know that if $(S_{\nu}^{k\modd{ \gamma_{\nu}}})_{ij}\neq 0$ then $[i] \to_{k\modd{\gamma_{\nu}}}[j]$. Therefore $$(S_{\nu}^{k\modd{ \gamma_{\nu}}}\otimes R_{\nu})_{i,\cdot}=\bigoplus_{j\in N_c} (S_{\nu}^{k\modd{ \gamma_{\nu}}})_{ij}\otimes  (R_{\nu})_{j,\cdot}=\bigoplus_{j\colon [i]\to_{k \modd{ \gamma_{\nu}}} [j]} (S_{\nu}^{k\modd{ \gamma_{\nu}}})_{ij}\otimes  (R_{\nu})_{j,\cdot}=(R_{\nu})_{j,\cdot}.$$ This means that for a row $i$ such that $[i]\to_{k \modd{ \gamma_{\nu}}} [j]$ we have $(S_{\nu}^{k\modd{\gamma_{\nu}}}\otimes R_{\nu})_{i,\cdot}=(R_{\nu})_{j,\cdot}$ and all such rows of $S_{\nu}^{k\modd{\gamma_{\nu}}}\otimes R_{\nu}$ are equal to each other.

Our next aim is to define, for each $\nu$, matrices $C'_{\nu}$ and $R'_{\nu}$ with $\gamma_{\nu}$ rows and $\gamma_{\nu}$ columns, such that 
$C_{\nu}S_{\nu}^{k\modd{\gamma_{\nu}}}R_{\nu}[\Gamma(k)]=C'_{\nu}\otimes R'_{\nu}$. To form matrix $C'_{\nu}$, we select a node of $\crit_{\nu}$ from each cyclic class $\cclass^\nu_{0},\ldots,\cclass^\nu_{\gamma_{\nu}-1}$ and define the column of $C'_{\nu}$ whose index is the number of this node to be the column of $C_{\nu}$
with the same index. The rest of the columns of $C'_{\nu}$ are set to $-\infty$. To form matrix $R'_{\nu}$, we use the same selected nodes, but this time (instead of taking columns of $C_{\nu}$ and making them columns of $C'_{\nu}$) we take the rows from $S_{\nu}^{k\modd{\gamma_{\nu}}}\otimes R_{\nu}$ whose indices are the numbers of selected nodes and make them rows of $R'_{\nu}$. The rest of the rows of $R'_{\nu}$ are set to $-\infty$. 
Since the rows of $C_{\nu}$ with indices in the same cyclic class are equal to each other and the same is true about the rows of $S_{\nu}^{k\modd{\gamma_{\nu}}}\otimes R_{\nu}$, we have $C_{\nu}S_{\nu}^{k\modd{\gamma_{\nu}}}R_{\nu}[\Gamma(k)]=C'_{\nu}\otimes R'_{\nu}$, thus the factor rank of any of these terms is no more than $\gamma_{\nu}$.

We next form the matrices $C'=\bigoplus_{\nu=1}^m C'_{\nu}$ and $R'=\bigoplus_{\nu=1}^m R'_{\nu}$. Obviously, $C'_{\nu_1}\otimes R'_{\nu_2}=-\infty$ for $\nu_1\neq \nu_2$ and therefore 
\begin{align*}
    C'\otimes R' = \bigoplus_{\nu=1}^{m} C'_{\nu} \otimes R'_{\nu} = \bigoplus_{\nu=1}^m C_{\nu} S_{\nu}^{k\modd{\gamma_{\nu}}} R_{\nu}[\Gamma(k)]=CS^{k\modd{\gamma}}R[\Gamma(k)].
\end{align*}
Finally, as $C'$ and, respectively, $R'$ have 
$\sum_{\nu=1}^m \gamma_{\nu}$ columns with finite entries and, respectively, rows with finite entries with the same indices, $CS^{k\modd{\gamma}}R[\Gamma(k)]=C'\otimes R'$ has factor rank at most $\sum_{\nu=1}^{m} \gamma_{\nu}$.


\end{proof}

\begin{corollary}
\label{c:CSRrank} 
If $\Gamma(k)$ is CSR, then its rank is no more than $\sum_{\nu=1}^m \gamma_{\nu}$.
\end{corollary}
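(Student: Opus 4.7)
The corollary is an immediate consequence of Theorem~\ref{t:CSRrank} once we unpack what ``CSR'' means. My plan is simply to combine the hypothesis with the factor-rank bound already proved.

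First, invoke the hypothesis: by Definition~\ref{d:CSR1} (equivalently Definition~\ref{d:CSR2}, thanks to Proposition~\ref{p:definitionequiv}), saying that $\Gamma(k)$ is CSR means precisely
\[
  \Gamma(k)=CS^{k\modd{\gamma}}R[\Gamma(k)]=\bigoplus_{\nu=1}^m C_{\nu}S_{\nu}^{k\modd{\gamma_{\nu}}}R_{\nu}[\Gamma(k)].
\]
So the matrix whose rank we wish to bound coincides with the matrix whose factor rank has just been estimated.

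Next, apply Theorem~\ref{t:CSRrank} directly: the factor rank of $CS^{k\modd{\gamma}}R[\Gamma(k)]$ is at most $\sum_{\nu=1}^{m}\gamma_{\nu}$. Since this matrix equals $\Gamma(k)$ under the CSR hypothesis, the same bound carries over to $\Gamma(k)$, which is exactly what the corollary asserts.

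There is no genuine obstacle here; the entire content of the corollary is ``replace $CS^{k\modd{\gamma}}R[\Gamma(k)]$ by $\Gamma(k)$ in Theorem~\ref{t:CSRrank}.'' The only thing worth flagging is that the statement says ``rank'' without a qualifier, so I would mention explicitly that we mean tropical factor rank (the notion introduced in the introduction and used throughout Theorem~\ref{t:CSRrank}), to avoid any ambiguity with the other notions of rank discussed in~\cite{AGG-09}.
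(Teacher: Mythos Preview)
Your proposal is correct and matches the paper's approach: the corollary is stated immediately after Theorem~\ref{t:CSRrank} without proof, precisely because it follows at once from the theorem together with the definition of $\Gamma(k)$ being CSR. Your remark that ``rank'' here means tropical factor rank is a sensible clarification.
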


Let us also prove the following results that are similar to \cite[Corollary 3.7]{SerSch}. 

\begin{proposition}
\label{p:CSRcrit}
For each $\nu=1,\ldots,m$
\begin{align*}
    (C_{\nu}\otimes S_{\nu}^{k\modd{\gamma_{\nu}}}\otimes R_{\nu})_{\cdot,j} = (C_{\nu}\otimes S_{\nu}^{k\modd{\gamma_{\nu}}})_{\cdot,j} \quad \text{for} \quad j\in \critnodes^{\nu} \\ (C_{\nu}\otimes S_{\nu}^{k\modd{\gamma_{\nu}}}\otimes R_{\nu})_{i,\cdot} = (S_{\nu}^{k\modd{\gamma_{\nu}}}\otimes R_{\nu})_{i,\cdot} \quad \text{for} \quad i\in\critnodes^{\nu}.
\end{align*}
\end{proposition}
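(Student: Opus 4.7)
The plan is to leverage the simplifications derived right after Definitions~\ref{d:CSR1}--\ref{d:CSR2}: namely $C_\nu \otimes S_\nu^{k \modd{\gamma_\nu}} \otimes R_\nu = \Gamma(k) \otimes S_\nu^{v_\nu} \otimes \Gamma(k)$ from~\eqref{e:CSRdirect2}, together with $C_\nu \otimes S_\nu^{k \modd{\gamma_\nu}} = \Gamma(k) \otimes S_\nu^{v_\nu + k\modd{\gamma_\nu}} = \Gamma(k) \otimes S_\nu^{(t_\nu+1)\gamma_\nu}$. Under these simplifications, for $j \in \critnodes^\nu$ the first identity amounts to showing
\[
\left(\Gamma(k) \otimes S_\nu^{v_\nu} \otimes \Gamma(k)\right)_{\cdot,\, j} = \left(\Gamma(k) \otimes S_\nu^{(t_\nu+1)\gamma_\nu}\right)_{\cdot,\, j}.
\]
Expanding entries via the optimal walk interpretation, the right-hand side at row $i$ equals $\max_{l \in \critnodes^\nu,\, [l] = [j]} \Gamma(k)_{i, l}$, while the left-hand side equals $\max_{p,\, q \in \critnodes^\nu,\, [q] \equiv [p] - k \pmod{\gamma_\nu}} \left[\Gamma(k)_{i, p} + \Gamma(k)_{q, j}\right]$.

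The $\geq$ direction is short: for any $l \in \critnodes^\nu$ with $[l] = [j]$, set $p = l$ and pick $q \in \critnodes^\nu$ with $[q] \equiv [l] - k \pmod{\gamma_\nu}$ for which a critical walk of length $k$ on $\crit_\nu$ from $q$ to $j$ exists (feasible by the cyclic class match $[j] = [q] + k$ and the strong connectivity of $\crit_\nu$ for $k$ large enough). Such a walk has weight $0$ by Assumption~\ref{as:vis}, forcing $\Gamma(k)_{q, j} = 0$, so the summand equals $\Gamma(k)_{i, l}$; maximizing over $l$ yields the RHS.

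The $\leq$ direction is the main work. For an arbitrary contributing pair $(p, q)$, if $[p] = [j]$ then $\Gamma(k)_{i, p} + \Gamma(k)_{q, j} \leq \Gamma(k)_{i, p}$ by Assumption~\ref{as:vis}, which is already dominated by the RHS. If $[p] \neq [j]$, the cyclic class mismatch forces $\Gamma(k)_{q, j}$ to be strictly negative: any walk from $q$ to $j$ of weight $0$ on $\trellis$ must use only zero-weight edges, and the structure of the zero-subgraph of $\Asup$ (whose zero-s.c.c.'s coincide with the critical components) confines such a walk to $\crit_\nu$, mandating $[j] = [q] + k$, which contradicts $[p] \neq [j]$. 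The remaining obstacle is to show that this strict deficit $-\Gamma(k)_{q, j} > 0$ dominates any excess of $\Gamma(k)_{i, p}$ over $\max_{l:\, [l] = [j]} \Gamma(k)_{i, l}$. I would address this by reorganising the walk $i \to_\trellis p \to_{\crit_\nu,\, v_\nu} q \to_\trellis j$ of length $2k + v_\nu$ on $\trellis'$ into a walk on the asymmetric trellis for $C_\nu S_\nu^{k \modd{\gamma_\nu}}$ (namely, $k$ trellis steps followed by $(t_\nu+1)\gamma_\nu$ critical steps on $\crit_\nu$) ending at $j$ with at least the same weight, exploiting that $v_\nu + k$ is a multiple of $\gamma_\nu$ exceeding $T(S_\nu)$, so that $S_\nu^{v_\nu + k} = S_\nu^{(t_\nu+1)\gamma_\nu}$ by ultimate periodicity (cf.~Lemma~\ref{l:elem}).

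The second identity, for rows $i \in \critnodes^\nu$, follows by the symmetric argument that swaps the roles of the initial and terminal $\Gamma(k)$ factors in the simplified product, paralleling the dual treatment of rows of $R_\nu$ and columns of $C_\nu$ in Lemma~\ref{l:cyclicequality}.
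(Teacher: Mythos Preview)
Your approach differs from the paper's. You expand both sides via the algebraic identities $C_\nu S_\nu^{k\modd{\gamma_\nu}}R_\nu=\Gamma(k)\otimes S_\nu^{v_\nu}\otimes\Gamma(k)$ and $C_\nu S_\nu^{k\modd{\gamma_\nu}}=\Gamma(k)\otimes S_\nu^{(t_\nu+1)\gamma_\nu}$ and then compare the resulting maxima term by term, splitting into cases on whether $[p]=[j]$. The paper instead works directly with walk sets on the extended trellis $\trellis'$: it identifies the right-hand side as $p\bigl(\walkslen{i}{j}{k+t_\nu\gamma_\nu}{\trellis',\init}\bigr)$, first shows that this optimal weight is the same for every endpoint $j'\in[j]$, and for the inequality $\leq$ simply truncates a full walk of length $2k+v$ at position $k+t_\nu\gamma_\nu$ (which lies in the critical middle of $\trellis'$) and bounds the weight of the remaining final subwalk by $0$ via Assumption~\ref{as:vis}. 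No case split on cyclic classes is performed.

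Two concrete issues with your proposal. First, in the $\geq$ direction the caveat ``for $k$ large enough'' is unnecessary and signals a gap in your reasoning: for any $k\geq 1$ you can backtrack $k$ steps from $j$ along $\crit_\nu$ (every node of a strongly connected digraph has an in-edge), landing at some $q$ from which a length-$k$ critical walk to $j$ exists, whence $\Gamma(k)_{q,j}=0$. Second, and more seriously, your $\leq$ argument does not go through as written. The claim that $\Gamma(k)_{q,j}$ is \emph{strictly} negative when $[p]\neq[j]$ is not justified: Assumption~\ref{as:vis} only gives $(A_\alpha)_{rs}\leq 0$ on non-critical edges, so weight-zero non-critical edges are allowed, and the paper nowhere establishes that the zero-subgraph of a visualised matrix has its strongly connected components equal to the critical components. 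Even if strict negativity held, you would still need a quantitative bound relating the deficit $-\Gamma(k)_{q,j}$ to the excess $\Gamma(k)_{i,p}-\max_{l\in[j]}\Gamma(k)_{i,l}$, and your ``reorganising the walk'' proposal is left entirely unspecified: the two walks have different lengths $2k+v_\nu$ versus $k+(t_\nu+1)\gamma_\nu$, and you do not say which piece is discarded or why the weight does not decrease. The paper's truncation-plus-tail-bound gives a concrete mechanism for precisely this step; your case analysis is a detour that still leaves the main work undone.
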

\begin{proof}
As the proofs are very similar for both statements we will only prove the first and omit the proof for the second statement. We begin by observing that $$(C_{\nu}\otimes S_{\nu}^{k\modd{\gamma_{\nu}}})_{i,j}=p\left(\walkslen{i}{j}{k+t_{\nu}\gamma_{\nu}}{\trellis', \init}\right)
,$$
where we used the definitions of $C_{\nu}$ and $S_{\nu}$ and the identity
$S_{\nu}^{(t_{\nu}+1)\gamma_{\nu}}=S_{\nu}^{t_{\nu}\gamma_{\nu}}$ (since $t_{\nu}\gamma_{\nu}\geq T(S_{\nu})$).
Here it is convenient to choose $t_{\nu}$ that satisfies $(t_{\nu}+1)\gamma_{\nu}-k\modd{\gamma_{\nu}}=(t+1)\gamma-k\modd\gamma$, with 
$t$ used in the definition of $\trellis'$.  With this choice $t_{\nu}\gamma_{\nu}\leq t\gamma$.

 Using~\eqref{e:optwalkCSR}, all we need to show is that $p\left(\walkslennode{i}{j}{2k+v}{\trellis',\full}{\critnodes^{\nu}}\right) = p\left(\walkslen{i}{j}{k+t_{\nu}\gamma_{\nu}}{\trellis',\init}\right)$, where $v=(t+1)\gamma-k\modd\gamma$. We will achieve this by proving these two inequalities:
\begin{equation} \label{eq:CSRcolumn}
\begin{split}
    p\left(\walkslennode{i}{j}{2k+v}{\trellis',\full}{\critnodes^{\nu}}\right) & \geq p\left(\walkslen{i}{j}{k+t_{\nu}\gamma_{\nu}}{\trellis',\init}\right),\\  
 p\left(\walkslennode{i}{j}{2k+v}{\trellis',\full}{\critnodes^{\nu}}\right) & \leq p\left(\walkslen{i}{j}{k+t_{\nu}\gamma_{\nu}}{\trellis',\init}\right)
    \end{split}
\end{equation}
To prove the first inequality of~\eqref{eq:CSRcolumn} we first consider $\walkslen{i}{j'}{k+t_{\nu}\gamma_{\nu}}{\trellis',\init},$ where $j'\in [j]$. Optimal walk in any of these sets can be decomposed into 
1) an optimal full walk on $\trellis$ connecting $i$ to a node of $[j]$, and 2) a walk of weight $0$ and length $t_{\nu}\gamma_{\nu}$ on $\crit_{\nu}$ connecting that node of $[j]$ to $j'$, whose existence follows since $t_{\nu}\gamma_{\nu}\geq T(S_{\nu}).$ This decomposition implies that the weights of all these optimal walks are equal. One of them, denote it by $W_1$ can be concatenated with a walk $W_2$ on $\crit_{\nu}$ of length $k-k\modd{\gamma_{\nu}}+\gamma$ and ending in $j$. We see that $p(W_1W_2)=p(W_1)$ and $W_1W_2\in \walkslennode{i}{j}{2k+v}{\trellis',\full}{\critnodes^{\nu}}.$


To prove the second inequality of~\eqref{eq:CSRcolumn} we take a
walk in $\walkslennode{i}{j}{2k+v}{\trellis',\full}{\critnodes^{\nu}}$ and decompose it into 1) a walk in $\walkslen{i}{j'}{k+t_{\nu}\gamma_{\nu}}{\trellis',\init},$ where $j'\in [j]$, 2) a walk in $\walkslen{j'}{j}{k-k\modd{\gamma_{\nu}}+\gamma_{\nu}}{\trellis',\final}$. 
The weight of the first walk is bounded by $p\left(\walkslen{i}{j}{k+t_{\nu}\gamma_{\nu}}{\trellis',\init}\right)$, and the weight of the second walk is bounded by $0$, thus the second inequality also holds.
\end{proof}

\begin{corollary} \label{c:CSRcritfull}
For CSR as defined in Definition~\ref{d:CSR1} we have,
\begin{align*}
    (C\otimes S^{k\modd{\gamma}}\otimes R)_{\cdot,j} = (C\otimes S^{k\modd{\gamma}})_{\cdot,j} \quad \text{for} \quad j\in \critnodes \\ (C\otimes S^{k\modd{\gamma}}\otimes R)_{i,\cdot} = (S^{k\modd{\gamma}}\otimes R)_{i,\cdot} \quad \text{for} \quad i\in\critnodes.
\end{align*}
\end{corollary}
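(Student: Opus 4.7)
The plan is to reduce the statement to its component-wise analogue, namely Proposition~\ref{p:CSRcrit}, by exploiting the block-like decompositions~\eqref{e:CRdecomp} together with the equivalence of the two CSR definitions established in Proposition~\ref{p:definitionequiv}. The key structural observation is that for any $\nu$, the matrix $S_\nu^{k\modd{\gamma_\nu}}$ has finite entries only in positions whose both indices lie in $\critnodes^\nu$, hence $(C_\nu\otimes S_\nu^{k\modd{\gamma_\nu}})_{\cdot,j}=\tropzero$ whenever $j\notin\critnodes^\nu$, and similarly for rows indexed outside $\critnodes^\nu$ of $S_\nu^{k\modd{\gamma_\nu}}\otimes R_\nu$.

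To prove the first identity, fix $j\in\critnodes$ and let $\mu$ be the unique index with $j\in\critnodes^\mu$ (unique because the components $\crit_\nu$ are pairwise disjoint). Using Proposition~\ref{p:definitionequiv} and~\eqref{e:CRdecomp} we can write
\begin{equation*}
(C\otimes S^{k\modd\gamma}\otimes R)_{\cdot,j}=\bigoplus_{\nu=1}^m (C_\nu\otimes S_\nu^{k\modd{\gamma_\nu}}\otimes R_\nu)_{\cdot,j},\qquad (C\otimes S^{k\modd\gamma})_{\cdot,j}=\bigoplus_{\nu=1}^m (C_\nu\otimes S_\nu^{k\modd{\gamma_\nu}})_{\cdot,j}.
\end{equation*}
By the structural observation above, for $\nu\neq\mu$ both $j$-th columns vanish (equal the all-$\tropzero$ column), so each side collapses to its $\mu$-th summand. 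The $\mu$-th summands are equal by the first identity of Proposition~\ref{p:CSRcrit}, which gives the desired equality.

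The second identity is handled by the entirely symmetric argument, applied to rows instead of columns: for $i\in\critnodes$ there is again a unique $\mu$ with $i\in\critnodes^\mu$, all terms with $\nu\neq\mu$ contribute an all-$\tropzero$ row on both sides, and the surviving $\mu$-th summands are equated by the second identity of Proposition~\ref{p:CSRcrit}. I do not anticipate any real obstacle here — the only subtle point is to verify carefully the "off-diagonal" vanishing $(C_\nu\otimes S_\nu^{k\modd{\gamma_\nu}})_{\cdot,j}=\tropzero$ for $j\notin\critnodes^\nu$, which follows directly from the definition of $S_\nu$ and the fact that powers of $S_\nu$ are supported on $\critnodes^\nu\times\critnodes^\nu$.
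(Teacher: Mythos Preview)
Your argument contains a genuine gap on the left-hand side. The structural observation you state is correct for $(C_\nu\otimes S_\nu^{k\modd{\gamma_\nu}})_{\cdot,j}$: since all finite entries of $S_\nu^{k\modd{\gamma_\nu}}$ lie in $\critnodes^\nu\times\critnodes^\nu$, this column is indeed $\tropzero$ when $j\notin\critnodes^\nu$. However, the same reasoning does \emph{not} apply to $(C_\nu\otimes S_\nu^{k\modd{\gamma_\nu}}\otimes R_\nu)_{\cdot,j}$. The factor $R_\nu=S_\nu^{\ldots}\otimes\Gamma(k)$ ends with $\Gamma(k)$, whose $j$-th column is typically finite regardless of whether $j$ lies in $\critnodes^\nu$; equivalently, by Lemma~\ref{r:CSRrep} this entry is the optimal weight of full walks on $\trellis'$ from $i$ to $j$ that traverse $\critnodes^\nu$, and such walks exist even when $j\in\critnodes^\mu$ with $\mu\neq\nu$. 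So the left-hand side does not ``collapse by vanishing'' to the $\mu$-th summand.

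The paper handles exactly this point differently: rather than claiming the off-diagonal terms vanish, it shows they are \emph{dominated} by the $\mu$-th term. Concretely, any walk in $\walkslennode{i}{j}{2k+v}{\trellis',\full}{\critnodes^{\nu}}$ also belongs to $\walkslennode{i}{j}{2k+v}{\trellis',\full}{\critnodes^{\mu}}$ because it ends at $j\in\critnodes^\mu$, hence $(C_\nu S_\nu^{k\modd{\gamma_\nu}}R_\nu)_{\cdot,j}\leq (C_\mu S_\mu^{k\modd{\gamma_\mu}}R_\mu)_{\cdot,j}$ for every $\nu$. Only after this domination argument can one reduce both sides to the $\mu$-th summand and invoke Proposition~\ref{p:CSRcrit}. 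You need to insert this step; the rest of your outline is fine.
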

\begin{proof}
The proofs for both statements are similar so we will only prove the first one.

Let $j\in\critnodes$. As all nodes from $\critnodes$ can be sorted into $\critnodes^{\nu}$ for some $\nu=1,\ldots,m$, assume without loss of generality that $j\in\critnodes^{\mu}$.

Taking the right-hand side of the first statement and using~\eqref{e:CRdecomp}, we have
\begin{equation*}
    (C\otimes S^{k\modd{\gamma}})_{\cdot,j} = \left(\bigoplus_{\nu=1}^{m} C_{\nu}\otimes S_{\nu}^{k\modd{\gamma_{\nu}}}\right)_{\cdot,j}.
\end{equation*}
By Definition~\ref{d:CSR2}, if $j\in\critnodes^{\mu}$ then for all $\nu\neq\mu$, $(C_{\nu}\otimes S_{\nu}^{k\modd{\gamma_{\nu}}})_{\cdot,j} = -\infty$. Therefore, for every $\nu$, $(C_{\nu}\otimes S_{\nu}^{k\modd{\gamma_{\nu}}})_{\cdot,j}$ will be dominated by $(C_{\mu}\otimes S_{\mu}^{k\modd{\gamma_{\mu}}})_{\cdot,j}$. Hence,
\begin{equation} \label{e:CSmu}
    \left(\bigoplus_{\nu=1}^{m} C_{\nu}\otimes S_{\nu}^{k\modd{\gamma_{\nu}}}\right)_{\cdot,j} = (C_{\mu}\otimes S_{\mu}^{k\modd{\gamma_{\mu}}})_{\cdot,j}.
\end{equation}

Turning our attention to the left-hand side of the first statement, by~\eqref{e:CRdecomp} we get
\begin{equation*}
    (C\otimes S^{k\modd{\gamma}}\otimes R)_{\cdot,j} = \left(\bigoplus_{\nu=1}^{m} C_{\nu}\otimes S_{\nu}^{k\modd{\gamma_{\nu}}}\otimes R_{\nu}\right)_{\cdot,j}.
\end{equation*}
Now we must show that, for $j\in\critnodes^{\mu}$ and for all $\nu$, $(C_{\nu}\otimes S_{\nu}{k\modd{\gamma_{\nu}}}\otimes R_{\nu})_{\cdot,j}\leq (C_{\mu}\otimes S_{\mu}^{k\modd{\gamma_{\mu}}}\otimes R_{\mu})_{\cdot,j}$. By~\eqref{e:optwalkCSR} this is the same as saying 
\begin{align*}
    p\left( \walkslennode{i}{j}{2k+v}{\trellis',\full}{\critnodes^{\nu}} \right) \leq  p\left( \walkslennode{i}{j}{2k+v}{\trellis',\full}{\critnodes^{\mu}} \right)
\end{align*}
for some arbitrary node $i$. Let $W$ be the walk of length $2k+v$ connecting $i$ to $j$ that traverses $\critnodes^{\nu}$, such that $p(W)= p\left( \walkslennode{i}{j}{2k+v}{\trellis',\full}{\critnodes^{\nu}} \right)$. As $j\in\critnodes^{\mu}$ then $W$ is also a walk of length $2k+v$ connecting $i$ to $j$ that traverses $\critnodes^{\mu}$, hence $W\in\walkslennode{i}{j}{2k+v}{\trellis',\full}{\critnodes^{\mu}}$ and the inequality holds.

Therefore, as with the right-hand side, we have
\begin{equation} \label{e:CSRmu}
    \left(\bigoplus_{\nu=1}^{m} C_{\nu}\otimes S_{\nu}^{k\modd{\gamma_{\nu}}}\otimes R_{\nu}\right)_{\cdot,j} = (C_{\mu}\otimes S_{\mu}^{k\modd{\gamma_{\mu}}}\otimes R_{\mu})_{\cdot,j}.
\end{equation}
Finally the first statement of Proposition~\ref{p:CSRcrit} gives us equality between~\eqref{e:CSmu} and~\eqref{e:CSRmu}. As $j$ was chosen arbitrarily, this holds for any $j\in\critnodes$ and the result follows.
\end{proof}

\section{General results} \label{cpt:genresults}

This section presents some results that hold for general inhomogeneous products satisfying the assumptions set out in Section~\ref{ss:assumptions}. Before we proceed, let us introduce the following piece of notation, inspired by the weak CSR expansion of Merlet~et~al.~\cite{MNS}:
\begin{notation}[$B^{\sup}$ and $\lambda_*$]
Denote
\begin{equation*}
(B^{\sup})_{i,j}=
\begin{cases}
\tropzero, & \text{if $i\in\critnodes$ or $j\in\critnodes$},\\
(\Asup)_{i,j}, &\text{otherwise}
\end{cases}
\end{equation*}
and by $\lambda_{\ast}$ the maximum cycle mean of $B^{\sup}$.
\end{notation}

We remark that the the metric matrix, given in~\cite{MainBook} and defined as $A^{+}=A\oplus A^{2} \oplus\ldots$, of $B^{\sup}$ is useful in calculating all the entries of $\gamma_{i,j}$ simultaneously.

\begin{notation}[$q$]
\label{not:q} 
We will denote by $q$ the number of critical nodes, i.e., $q=|\critnodes|$.
\end{notation}

The following results generalize~\cite[Lemmas 3.1-3.2]{KCP-19} for initial and final walks to the case of a general critical subgraph.
Observe that, under Assumptions~\ref{as:geom} and~\ref{as:vis},
we have $\lambda_{\ast}<0$, so that the bounds in the following lemmas make sense. Recall the sets of walks $\walks{i}{\critnodes\|}{\trellis,\init}$ and $\walks{\|\critnodes}{j}{\trellis,\final}$ introduced in Notation~\ref{not:trelliswalks}.

\begin{lemma} \label{l:initial}
Let $W_{i,\critnodes}$ be an optimal walk in $\walks{i}{\critnodes\|}{\trellis,\init}$, so that 
$p(W_{i,\critnodes})=w^*_{i,\critnodes}$. Then we have the following bound on the length of $W_{i,\critnodes}$:
\begin{equation}
\label{e:initialbound}
    l(W_{i,\critnodes}) \leq 
    \begin{cases}
    n-q, &\text{if $\lambda_\ast=\tropzero$},\\
    \frac{ w_{i,\critnodes}^{\ast} - \alpha_{i,\critnodes}}{\lambda_{\ast}} + (n-q), & \text{if $\lambda_*>\tropzero$}
    \end{cases}
\end{equation}
\end{lemma}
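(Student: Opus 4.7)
The plan is to lift $W_{i,\critnodes}$ from the trellis to a walk on $\digr(\Asup)$, then shorten it by repeatedly removing cycles through non-critical nodes, and finally compare the resulting path against $\alpha_{i,\critnodes}$. If $i\in\critnodes$, the only walk in $\walks{i}{\critnodes\|}{\trellis,\init}$ has length $0$ and the bound is trivial, so I assume $i\notin\critnodes$ and write $W_{i,\critnodes}=(i_0,\ldots,i_l)$ with $i_0=i$, $i_l\in\critnodes$, and $i_0,\ldots,i_{l-1}\notin\critnodes$ by the defining property of the walk set. Since each trellis edge weight $(A_{s+1})_{i_s,i_{s+1}}$ is bounded above by $(\Asup)_{i_s,i_{s+1}}$, the same node sequence read as a walk $\tilde W$ on $\digr(\Asup)$ satisfies $p_{\Asup}(\tilde W)\geq w^*_{i,\critnodes}$. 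Moreover, every internal edge $(i_s,i_{s+1})$ for $0\leq s\leq l-2$ connects two non-critical nodes, so it also lives in $\digr(B^{\sup})$ with the same weight.

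For the case $\lambda_*=\tropzero$, $\digr(B^{\sup})$ has no cycles at all, so the $l$ non-critical nodes $i_0,\ldots,i_{l-1}$ must be pairwise distinct, immediately giving $l\leq n-q$.

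For the main case $\lambda_*>\tropzero$, Assumptions~\ref{as:geom} and~\ref{as:vis} force every non-critical cycle of $\Asup$ to have strictly negative mean, hence $\lambda_*<0$. I would then iteratively splice cycles out of $\tilde W$: whenever two positions $s<t\leq l-1$ carry the same node $i_s=i_t$, delete the subwalk $(i_{s+1},\ldots,i_t)$, which is a cycle of $\digr(B^{\sup})$ of length $t-s$ and weight at most $\lambda_*(t-s)$. After exhausting all repetitions I obtain a path $P$ in $\digr(\Asup)$ from $i$ to $i_l\in\critnodes$ of length $l'\leq n-q$ (at most $n-q$ distinct non-critical interior nodes plus one critical endpoint), whose weight is at most $\alpha_{i,\critnodes}$ by definition. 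Tracking weights across the splicings yields $p_{\Asup}(P)\geq p_{\Asup}(\tilde W)-\lambda_*(l-l')\geq w^*_{i,\critnodes}-\lambda_*(l-l')$, so $w^*_{i,\critnodes}-\alpha_{i,\critnodes}\leq \lambda_*(l-l')$, and dividing by $\lambda_*<0$ (which flips the inequality) gives $l-l'\leq (w^*_{i,\critnodes}-\alpha_{i,\critnodes})/\lambda_*$, whence the stated bound.

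The main delicacy is keeping the time-indexed trellis walk and the time-free walks on $\digr(\Asup)$ and $\digr(B^{\sup})$ conceptually separated: cycle removal is only meaningful on the latter, because the ordered product $A_1\otimes\cdots\otimes A_l$ forbids skipping time steps on the trellis. The argument works precisely because the shortened walk never has to be realized back on the trellis; the one-way inequality $w^*_{i,\critnodes}\leq p_{\Asup}(\tilde W)$ decouples the two settings and lets the classical pigeonhole-plus-cycle-removal argument go through cleanly.
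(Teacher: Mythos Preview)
Your proof is correct and follows essentially the same route as the paper: lift the walk to $\digr(\Asup)$, decompose it into a path (length at most $n-q$, weight at most $\alpha_{i,\critnodes}$) plus non-critical cycles (total weight at most $\lambda_{\ast}$ times their combined length), and read off the length bound. The only difference is cosmetic: the paper packages the same inequalities into a short contradiction argument, whereas you carry out the cycle-splicing explicitly and argue directly; your version is in fact a bit more transparent about why the path has length at most $n-q$.
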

\begin{proof}
If $\lambda_*=\tropzero$, then any walk in 
$\walks{i}{\critnodes\|}{\trellis,\init}$ has to be a path, and its length is bounded by $n-q$.
Now let $\lambda_{\ast}>\tropzero$.
As $\lambda_{\ast} < 0$, the weight of the walk $W_{i,\critnodes}$ connecting $i$ to a node in $\critnodes$ is less than or equal to that of a path $P_{i,\critnodes}$ on $\digr(\Asup)$ connecting $i$ to a node in $\critnodes$ plus the remaining length multiplied by $\lambda_{\ast}$. The remaining length is bounded from above by $n-q$, since all intermediate nodes in $W_{i,\critnodes}$ are non-critical. Hence
\begin{equation*}
    p_{\trellis}(W_{i,\critnodes}) \leq p_{\sup}(P_{i,\critnodes}) + (l(W_{i,\critnodes}) - (n-q))\lambda_{\ast}.
\end{equation*}
We can bound $p_{\sup}(P_{i,\critnodes}) \leq \alpha_{i,\critnodes}$, so
\begin{equation} \label{intial-a}
   p_{\trellis}(W_{i,\critnodes}) \leq \alpha_{i,\critnodes} + (l(W_{i,\critnodes}) - (n-q))\lambda_{\ast}.
\end{equation}
Now assuming for contradiction that  $l(W_{i,\critnodes}) > \frac{ w_{i,\critnodes}^{\ast} - \alpha_{i,\critnodes} }{\lambda_{\ast}} + (n-q)$ . This is equivalent to 
\begin{equation} \label{intial-b}
    \alpha_{i,\critnodes} + (l(W_{i,\critnodes}) - (n-q))\lambda_{\ast} < w^{\ast}_{i,\critnodes}.
\end{equation}
In combining~\eqref{intial-a} and~\eqref{intial-b} we get $ p_{\trellis}(W_{i,\critnodes}) < w^{\ast}_{i,\critnodes}$ meaning that $W_{i,\critnodes}$ is not optimal, a contradiction. So we know that for for any $l\in \critnodes$ 
\begin{equation*}
     l(W_{i,\critnodes}) \leq \frac{w_{i,\critnodes}^{\ast} - \alpha_{i,\critnodes} }{\lambda_{\ast}} + (n-q).
\end{equation*}
The proof is complete.
\end{proof}

\begin{lemma} \label{l:final}
Let $W_{\critnodes,j}$ be an optimal walk in $\walks{\|\critnodes}{j}{\trellis,\final}$, so that 
$p(W_{\critnodes,j})=v^*_{\critnodes,j}$. Then we have the following bound on the length of $W_{\critnodes,j}$:
\begin{equation}
\label{e:finalbound}
    l(W_{\critnodes,j}) \leq
    \begin{cases}
    n-q, & \text{if $\lambda_*=\tropzero$},\\ 
    \frac{v_{\critnodes,j}^{\ast} - \beta_{\critnodes,j}}{\lambda_{\ast}} + (n-q), &\text{if $\lambda^*>\tropzero$}.
\end{cases}
\end{equation}
\end{lemma}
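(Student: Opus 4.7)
The proof will mirror the argument for Lemma~\ref{l:initial}, simply reversing the role of start and end nodes: a walk in $\walks{\|\critnodes}{j}{\trellis,\final}$ enters the critical set at a node of $\critnodes$, visits only non-critical nodes after that (by the definition of this walk set), and ends at $j$. So the structural setup is completely analogous.

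First, I would dispose of the case $\lambda_\ast=\tropzero$. Under Assumption~\ref{as:geom} and Assumption~\ref{as:vis}, any cycle on non-critical nodes has weight $\tropzero$ in $\digr(B^{\sup})$, hence in the maximally scaled setting a walk through purely non-critical nodes cannot revisit a node without acquiring weight $\tropzero$; an optimal walk in $\walks{\|\critnodes}{j}{\trellis,\final}$ must therefore be a path on these non-critical nodes (appended at the start to a single critical node), giving the bound $l(W_{\critnodes,j})\leq n-q$.

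For the case $\lambda_\ast>\tropzero$, the key inequality is obtained by the same comparison with $\digr(\Asup)$ that was used in Lemma~\ref{l:initial}: decompose $W_{\critnodes,j}$ as a path $P_{\critnodes,j}$ on $\digr(\Asup)$ (from a critical node to $j$, never revisiting any critical node) and the "extra" length, which is composed of cycles on non-critical nodes. Since each such cycle has mean weight bounded above by $\lambda_\ast$, and the path portion contributes weight at most $\beta_{\critnodes,j}$, while the residual length is at most $l(W_{\critnodes,j})-(n-q)$, I obtain
\begin{equation*}
    p_{\trellis}(W_{\critnodes,j}) \leq \beta_{\critnodes,j} + (l(W_{\critnodes,j})-(n-q))\lambda_\ast.
\end{equation*}

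Assuming for contradiction that the length exceeds $\frac{v^{\ast}_{\critnodes,j}-\beta_{\critnodes,j}}{\lambda_\ast}+(n-q)$ and using $\lambda_\ast<0$ to invert the inequality correctly, I would conclude that $p_{\trellis}(W_{\critnodes,j})<v^{\ast}_{\critnodes,j}$, contradicting optimality. The only mild subtlety, and the main place where care is needed, is keeping the sign bookkeeping straight since $\lambda_\ast<0$ (so dividing by $\lambda_\ast$ flips the inequality); this is exactly parallel to the previous lemma and presents no real obstacle.
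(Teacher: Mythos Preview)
Your proposal is correct and follows exactly the approach the paper intends: the paper itself omits the proof of this lemma, stating only that it is analogous to Lemma~\ref{l:initial}, and your argument is precisely that analogue with $\beta_{\critnodes,j}$ and $v^{\ast}_{\critnodes,j}$ in place of $\alpha_{i,\critnodes}$ and $w^{\ast}_{i,\critnodes}$. One small wording slip: the residual (cycle) length is at \emph{least} $l(W_{\critnodes,j})-(n-q)$, not at most, but since $\lambda_\ast<0$ this is exactly what is needed to obtain your displayed inequality, so the argument goes through unchanged.
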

As the proof of this lemma is analogous to the proof of Lemma~\ref{l:initial} it is omitted. 
Also, we can observe that $n-q$ is the limit of the expressions on the right-hand side of~\eqref{e:initialbound} and~\eqref{e:finalbound} as $\lambda_*\to\tropzero$, hence we will not consider this case separately in the rest of the paper.

The following result is a generalised form of~\cite[Lemma 3.4]{KCP-19} which uses a nominal weight $\omega$.

\begin{lemma} \label{l:avoiding}
If $\gamma_{i,j}=\tropzero$, then any full walk connecting $i$ to $j$ on $\trellis(\Gamma(k))$ traverses a node in $\critnodes$.\\
If $\gamma_{i,j}>\tropzero$, let
\begin{equation} \label{full-l}
    k > \frac{\omega-\gamma_{i,j}}{\lambda_{\ast}}+(n-q)
\end{equation}
for some $\omega \in \mathbb{R}$. Then any full walk $W$ connecting $i$ to $j$ on $\trellis(\Gamma(k))$ that does not go through any node $l\in \critnodes$ has weight smaller than $\omega$.
\end{lemma}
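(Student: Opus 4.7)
The plan is to handle the two cases separately. For the first case ($\gamma_{i,j}=\tropzero$), I would argue by reachability: the equality $\gamma_{i,j}=\tropzero$ means that no path from $i$ to $j$ avoiding $\critnodes$ exists in $\digr(\Asup)$. Since $\digr(\Asup)$ coincides with the common underlying digraph $\digr(\mathcal{Y})$ by Assumption~\ref{as:geom}, and since reachability by walks equals reachability by paths, no walk from $i$ to $j$ avoiding $\critnodes$ exists in $\digr(\mathcal{Y})$ either. Any full walk on $\trellis(\Gamma(k))$ projects to a walk on $\digr(\mathcal{Y})$ by forgetting the time layer, so a full walk from $i:0$ to $j:k$ must traverse a node of $\critnodes$.

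For the second case ($\gamma_{i,j}>\tropzero$), I would follow the spirit of the proof of Lemma~\ref{l:initial}. Let $W$ be a full walk of length $k$ on $\trellis(\Gamma(k))$ connecting $i$ to $j$ that avoids $\critnodes$. Since each normalized matrix $A_l \in \mathcal{Y}$ is dominated entrywise by $\Asup$ and every edge of $W$ has both endpoints non-critical, $p_{\trellis}(W)$ is bounded above by the weight of the corresponding walk $\tilde W$ on $\digr(B^{\sup})$ (same node sequence). I would then invoke the standard decomposition of $\tilde W$ as a simple path $P$ from $i$ to $j$ together with a collection of cycles, all contained in $\digr(B^{\sup})$: the path satisfies $p(P)\leq \gamma_{i,j}$ by definition, with $l(P)\leq n-q$ (only non-critical nodes are available), while each cycle has mean weight at most $\lambda_{\ast}$.

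Combining these bounds yields
\begin{equation*}
p_{\trellis}(W) \leq \gamma_{i,j} + (k - l(P))\,\lambda_{\ast} \leq \gamma_{i,j} + (k - (n-q))\,\lambda_{\ast},
\end{equation*}
where the second inequality uses $\lambda_{\ast}<0$ (which holds under Assumptions~\ref{as:geom} and~\ref{as:vis}) together with $l(P)\leq n-q$. Substituting the hypothesis $k>(\omega-\gamma_{i,j})/\lambda_{\ast}+(n-q)$ and flipping the inequality upon multiplication by $\lambda_{\ast}<0$ gives $p_{\trellis}(W)<\omega$, as required. The main care required is purely bookkeeping: keeping the direction of the inequalities straight under multiplication by the negative $\lambda_{\ast}$, and noting that the cycles extracted in the decomposition genuinely lie in $\digr(B^{\sup})$ rather than just in $\digr(\Asup)$. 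The latter is immediate because $W$ avoids $\critnodes$, so every node visited by $W$---and hence every cycle excised from it---uses only edges between non-critical nodes.
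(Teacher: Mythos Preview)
Your proof is correct and follows essentially the same approach as the paper: for $\gamma_{i,j}=\tropzero$ you appeal to geometric equivalence and reachability, and for $\gamma_{i,j}>\tropzero$ you decompose the avoiding walk into a path of length at most $n-q$ plus cycles of mean weight at most $\lambda_{\ast}$, arriving at the same bound $p_{\trellis}(W)\leq\gamma_{i,j}+(k-(n-q))\lambda_{\ast}$. Your version is slightly more explicit about invoking $B^{\sup}$ and about the intermediate inequality $\gamma_{i,j}+(k-l(P))\lambda_{\ast}$, but the argument is the same.
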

\begin{proof}
In the case when $\gamma_{i,j}=\tropzero$, the claim follows by the definition of $\gamma_{i,j}$ and by the geometric equivalence between $\Asup$ and the matrices from $\mathcal{Y}$. So we assume that $\gamma_{i,j}> \tropzero$. Any walk $W$ that does not traverse any node in $\critnodes$ can be decomposed into a path $P$ connecting $i$ to $j$  avoiding $\critnodes$ and a number of cycles. Hence we have the following bound:
\begin{equation*}
    p_{\trellis}(W) \leq p_{\sup}(P) + (k-(n-q))\lambda_{\ast}.
\end{equation*}
We can further bound $p_{\sup}(P) \leq \gamma_{i,j}$ so
\begin{equation} \label{full-a}
    p_{\trellis}(W) \leq \gamma_{i,j} + (k-(n-q))\lambda_{\ast}.
\end{equation}
Now~\eqref{full-l} can be rewritten as
\begin{equation} \label{full-b}
    \gamma_{i,j} + (k-(n-q))\lambda_{\ast} < \omega.
\end{equation}
By combining~\eqref{full-a} with~\eqref{full-b} we have $p_{\trellis}(W) < \omega$, which completes the proof.
\end{proof}

Using this bound we can obtain a bound after which the CSR term becomes a valid upper bound for $\Gamma(k)$.

\begin{theorem} \label{th:weakcsr}
If $\gamma_{i,j}=\tropzero$ then $\Gamma(k) \leq CS^{k\modd{\gamma}}R[\Gamma(k)]$.\\
If $\gamma_{i,j}>\tropzero$, let  
\begin{equation} \label{eq:weakcsr}
    k > \max_{i,j\colon i\to_\trellis j,\gamma_{i,j}>\tropzero} \left( \frac{\Gamma(k)_{i,j}-\gamma_{i,j}}{\lambda_\ast} + (n-q) \right).
\end{equation}
Then $\Gamma(k) \leq CS^{k\modd{\gamma}}R[\Gamma(k)]$.
\end{theorem}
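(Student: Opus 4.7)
The plan is to use the optimal walk interpretation of Lemma~\ref{r:CSRrep}, namely $(CS^{k\modd{\gamma}}R[\Gamma(k)])_{i,j}=p(\walkslen{i}{j}{2k+v}{\trellis',\full})$ with $v=(t+1)\gamma-k\modd\gamma$ and $t\gamma\geq T(S)$, and to exhibit, for each entry, a full walk on $\trellis'(\Gamma(k))$ whose weight is at least $\Gamma(k)_{i,j}$. If $i\not\to_\trellis j$, the claim is trivial by Lemma~\ref{l:elemequiv}, so assume $i\to_\trellis j$ and let $W$ be an optimal full walk on $\trellis$ from $i:0$ to $j:k$ with $p(W)=\Gamma(k)_{i,j}$.

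The first step is to apply Lemma~\ref{l:avoiding} to force $W$ to traverse a critical node. If $\gamma_{i,j}=\tropzero$ this is immediate; if $\gamma_{i,j}>\tropzero$, setting $\omega=\Gamma(k)_{i,j}$ in Lemma~\ref{l:avoiding} together with the hypothesis on $k$ shows that every full walk from $i$ to $j$ avoiding $\critnodes$ has weight strictly less than $\Gamma(k)_{i,j}$, so cannot be optimal. Fix a critical node $c$ visited by $W$ at position $r$, belonging to some component $\crit_\nu$, and split $W=W_1\cdot W_2$ at $c:r$.

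Next, I construct a walk $W'$ on $\trellis'$ from $i:0$ to $j:2k+v$ by inserting a critical detour at $c$. The walk $W'$ consists of three pieces: (a) the walk $W_1$ traced in the first $\trellis$-copy of $\trellis'$, from $i:0$ to $c:r$, contributing weight $p(W_1)$; (b) a closed zero-weight walk $Z$ in $\crit_\nu$ of length $k+v$ starting and ending at $c$, straddling positions $r$ through $k+v+r$ of $\trellis'$ (traversing the remainder of the first $\trellis$-copy via critical edges, then the central critical section, then the initial portion of the second $\trellis$-copy via critical edges), contributing weight $0$; (c) the walk $W_2$ traced in the second $\trellis$-copy from $c:k+v+r$ to $j:2k+v$, contributing weight $p(W_2)$. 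By Assumption~\ref{as:vis} every matrix in $\mathcal{Y}$ has weight zero on critical edges, so it is harmless that the edges of $Z$ occurring in the two outer $\trellis$-copies are labelled by various $A_l$. The total length is $r+(k+v)+(k-r)=2k+v$ and the total weight is $p(W_1)+0+p(W_2)=\Gamma(k)_{i,j}$, so $(CS^{k\modd{\gamma}}R[\Gamma(k)])_{i,j}\geq\Gamma(k)_{i,j}$ as required.

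The main obstacle lies in verifying that the detour $Z$ actually exists in $\crit_\nu$. For this it suffices that $k+v$ is divisible by $\gamma_\nu$ and is at least $T(S_\nu)$. Writing $k=q\gamma+(k\modd\gamma)$ yields $k+v=(q+t+1)\gamma$, which is a multiple of $\gamma$ and hence of $\gamma_\nu$; and $(q+t+1)\gamma\geq(t+1)\gamma>T(S)\geq T(S_\nu)$, where the final inequality is the one implicit in the proof of Lemma~\ref{l:elem}. Beyond this combinatorial check the rest of the argument is essentially the identity $p(W')=p(W_1)+p(W_2)=p(W)$, so the heart of the proof really is the single application of Lemma~\ref{l:avoiding} combined with the critical-edge insertion.
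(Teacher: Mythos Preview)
Your proof is correct and follows essentially the same approach as the paper: reduce to the case $i\to_\trellis j$, use Lemma~\ref{l:avoiding} to force the optimal walk $W$ through a critical node, split $W$ there, and insert a closed critical walk of length $k+v$ to obtain a full walk on $\trellis'$ of the same weight. Your verification that $k+v=(q+t+1)\gamma$ is a multiple of $\gamma_\nu$ exceeding $T(S_\nu)$ matches the paper's, and your explicit remark that the critical edges of $Z$ have weight zero even where they lie in the two $\trellis$-copies (by Assumption~\ref{as:vis}) is a point the paper leaves implicit.
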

\begin{proof}
If $i\not\to_\trellis j$, then $(\Gamma(k))_{i,j}=-\infty$. In this case, obviously, $\Gamma(k)_{i,j}\leq (CS^{k\modd{\gamma}}R[\Gamma(k)])_{i,j}$.  


If $i\to_\trellis j$, then $(\Gamma(k))_{i,j}\neq \tropzero$. Let $W^{\ast}$ be the optimal walk of length $k$ on $\trellis(\Gamma(k))$ connecting $i$ to $j$ with weight $\Gamma(k)_{i,j}$. If $k$ is greater than the bound~\eqref{eq:weakcsr} then, by Lemma~\ref{l:avoiding}, for the walk to have weight equal to $\Gamma(k)_{i,j}$, it must traverse at least one node in $\critnodes$, and the same is true when $\gamma_{i,j}=\tropzero$. Hence this walk belongs to the set $\walkslennode{i}{j}{k}{\trellis}{\critnodes}$ and further $\Gamma(k)_{i,j}=p(W^{\ast}) \leq p\left( \walkslennode{i}{j}{k}{\trellis}{\critnodes} \right)$.

Let $f\in\critnodes$ be the first critical node in the first critical s.c.c $\crit_{\nu}$, with cyclicity $\gamma_{\nu}$, that $W^{\ast}$ traverses. We can split the walk into $W^{\ast}=W_{1}W_{3}$ where $W_{1}$ is a walk connecting $i$ to $f$ of length $r$ and $W_{3}$ is a walk connecting $f$ to $j$ of length $k-r$. We have $p(W^{\ast})=p(W_{1})+p(W_{3})$. 

Let $\trellis'$ be the trellis extension for the matrix product $CS^{k\modd{\gamma}}R[\Gamma(k)]$ with length $2k + v$ where $v=(t+1)\gamma-k\modd{\gamma}$ as described in Definition~\ref{d:trellisext}.

We now introduce the new walk $W'=W_{1}W_{2}W_{3}$ on $\trellis'$. Here $W_{1}$ and $W_{3}$ are the subwalks from $W^{\ast}$ introduced before, where $W_1$ is viewed as an initial walk on $\trellis'$ and $W_3$ as a final walk on $\trellis'$, and $W_{2}$ is a closed walk of length $k+v$ that starts and ends at $f$. Since $k+v\equiv 0\modd{\gamma_{\nu}}$ and $k+v\geq T(S)\geq T(S_{\nu})$, this closed walk exists and can be entirely made up of edges from $\crit_{\nu}$. This means the walk $W'$ is of length $2k+v$ and it traverses the set of nodes $\critnodes^{\nu}$ therefore $W' \in \walkslennode{i}{j}{2k+v}{\trellis'}{\critnodes^{\nu}}$.

As $W_{2}$ is made entirely from critical edges, we have $p(W_{2})=0$ and $p(W^{\ast})=p(W')\leq p\left( \walkslennode{i}{j}{2k+v}{\trellis'}{\critnodes^{\nu}} \right)$, and using~\eqref{e:optwalkCSR2} gives us
\begin{equation*}
    \Gamma(k)_{i,j} = p(W^{\ast}) \leq (C_{\nu}S^{k\modd{\gamma_{\nu}}}_{\nu}R_{\nu}[\Gamma(k)])_{i,j} \leq (CS^{k\modd{\gamma}}R[\Gamma(k)])_{i,j},
\end{equation*}
where the last inequality is due to Proposition~\ref{p:definitionequiv}. The claim follows.

\end{proof}

This bound is implicit, as it requires $\Gamma(k)$ to be calculated in order to generate the transient. However, we can use $A^{\inf}$ and $u_{i,j}$ to develop an explicit bound.

\begin{corollary} \label{c:weakcsrinf}
Let 
\begin{equation} \label{eq:weakcsrinf}
    k > \max_{i,j\colon i\to_\trellis j, \gamma_{i,j}>\tropzero} \left( \frac{u^{k}_{i,j}-\gamma_{i,j}}{\lambda_{\ast}} + (n-q) \right).
\end{equation}
Then $\Gamma(k) \leq CS^{k\modd{\gamma}}R[\Gamma(k)]$.
\end{corollary}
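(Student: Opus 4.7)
The plan is to obtain Corollary~\ref{c:weakcsrinf} as a direct consequence of Theorem~\ref{th:weakcsr} by showing that the explicit bound~\eqref{eq:weakcsrinf} is a tightening of the implicit bound~\eqref{eq:weakcsr}. The essential observation is an entrywise comparison between walk weights on $\digr(\Ainf)$ and walk weights on the trellis $\trellis(\Gamma(k))$.

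First, I would verify that $u^k_{i,j} \leq (\Gamma(k))_{i,j}$ for every $i,j$ and every $k$. By Notation~\ref{not:asupinf}, every matrix $A_{\alpha}\in\mathcal{Y}$ dominates $\Ainf$ entrywise, so every edge $(i:l-1,j:l)$ in $\trellis(\Gamma(k))$ carries a weight $(A_l)_{i,j}\geq (\Ainf)_{i,j}$. Summing along a walk, any length-$k$ walk on $\digr(\Ainf)$ lifts to a full walk on $\trellis(\Gamma(k))$ of no smaller weight; taking the maximum and using the optimal walk interpretation~\eqref{eq:walkrep} gives $u^k_{i,j}\leq (\Gamma(k))_{i,j}$.

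Next, under Assumptions~\ref{as:geom} and~\ref{as:vis} we have $\lambda_*<0$. Subtracting $\gamma_{i,j}$ and dividing by the negative quantity $\lambda_*$ reverses the inequality, yielding
\begin{equation*}
    \frac{u^k_{i,j}-\gamma_{i,j}}{\lambda_*} \;\geq\; \frac{(\Gamma(k))_{i,j}-\gamma_{i,j}}{\lambda_*}.
\end{equation*}
Consequently, for any $(i,j)$ with $i\to_\trellis j$ and $\gamma_{i,j}>\tropzero$, if $k>\frac{u^k_{i,j}-\gamma_{i,j}}{\lambda_*}+(n-q)$ then also $k>\frac{(\Gamma(k))_{i,j}-\gamma_{i,j}}{\lambda_*}+(n-q)$. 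Taking maxima over the same index set---and noting by Lemma~\ref{l:elemequiv} that $i\to_\trellis j$ is equivalent to $u^k_{i,j}>\tropzero$, so the admissible pairs coincide---shows that~\eqref{eq:weakcsrinf} implies~\eqref{eq:weakcsr}. An application of Theorem~\ref{th:weakcsr} then yields $\Gamma(k)\leq CS^{k\modd{\gamma}}R[\Gamma(k)]$, as claimed.

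The proof is essentially routine and there is no serious obstacle; the only point requiring care is the sign reversal when dividing through by $\lambda_*<0$, and the verification that the maxima on the two sides are indexed over the same set of pairs $(i,j)$.
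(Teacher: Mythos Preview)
Your proposal is correct and follows essentially the same approach as the paper: establish $u^k_{i,j}\leq (\Gamma(k))_{i,j}$ from the definition of $\Ainf$, use $\lambda_*<0$ to show that~\eqref{eq:weakcsrinf} implies~\eqref{eq:weakcsr}, and then apply Theorem~\ref{th:weakcsr}. Your write-up is in fact slightly more detailed than the paper's, which simply asserts the inequality and the sign reversal without the intermediate walk-lifting argument.
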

\begin{proof}
By Lemma~\ref{l:elemequiv}, $i\to_\trellis j$ is equivalent to 
$u_{i,j}^k>\tropzero$, so maximum 
in~\eqref{eq:weakcsrinf} is taken over $i,j$ for which $u^k_{i,j}$ and $\gamma_{i,j}$ are finite. We also have
$u_{i,j}^k\leq(\Gamma(k))_{i,j}$ by the definition of $A^{\inf}$.




Further, as $\lambda_{\ast} < 0$, then any $k$ that satisfies~\eqref{eq:weakcsrinf} will also satisfy~\eqref{eq:weakcsr}. The claim now follows from Theorem~\ref{th:weakcsr}.
\end{proof}

\begin{remark}
All the results in this section do not require proper visualisation scaling on the matrices from $\mathcal{Y}$, but we need $\lambda_{\ast}<0$ and we require all critical edges to have weight zero in all matrices of $\mathcal{Y}$.
\end{remark}

\section{The case where CSR works} \label{cpt:ambientcase}

In the case when $\crit(\mathcal{X})$ is just one loop,
Kennedy-Cochrane-Patrick et al.~\cite{KCP-19} established a bound on the lengths of inhomogeneous products, after which these products are of tropical factor rank $1$.  In this section we extend this result to the case when 
$\digr(\mathcal{X})$ and $\crit(\mathcal{X})$ satisfy the following assumption, in addition to the assumptions that were set out in Section~\ref{ss:assumptions}.


\begin{asssumption}{P}{0} \label{as:ambientgamma}
$\crit(\mathcal{X})$ is strongly connected and its cyclicity $\gamma$ is equal to the cyclicity of $\digr(\mathcal{X})$.
\end{asssumption}

The equality between cyclicities means that the associated digraph $\digr(\mathcal{X})$ has the same number of cyclic classes $\gamma$ as $\crit(\mathcal{X})$. 

\begin{notation}
The cyclic classes of $\digr(\mathcal{X})$ are denoted by $\cclass'_0,\ldots,\cclass'_{\gamma-1}$.\\
For a node $i\in\nodes ,$ the cyclic class of this node with respect to $\digr(\mathcal{X})$ will be denoted by $[i]'$.  
\end{notation}

For a node $i\in\critnodes$, we will use both $[i]$ (the cyclic class with respect to $\crit(\mathcal{X})$) and $[i]'$ (the cyclic class with respect to $\digr(\mathcal{X})$), and an obvious inclusion relation between them: $[i]\subseteq [i]'$.

One of the ideas is to combine Lemmas~\ref{l:initial} and~\ref{l:final} together with Schwarz's bound. To define this bound, following~\cite{MNS}, we first introduce {\em Wielandt's number}
\begin{align*}
\wiel(n) = \begin{cases}
(n-1)^{2}+1 & \text{if} \; n\geq 1, \\
0 & \text{if} \; n=0,
\end{cases}
\end{align*} 
and then {\em Schwarz's number}
\begin{equation*}
\sch(\gamma,n)= \gamma\wiel\left(\left\lfloor\frac{n}{\gamma}\right\rfloor\right)+n\modd\gamma.
\end{equation*}

Let us now prove the following lemma.

\begin{lemma} \label{l:ambientcyclicitygamma}
Let 
\begin{equation}
    k \geq   \frac{ w_{i,\critnodes}^{\ast} - \alpha_{i,\critnodes} }{\lambda_{q\ast}} + (n-q) + \sch(\gamma,q)+  \frac{ v_{\critnodes,j}^{\ast} - \beta_{\critnodes,j} }{\lambda_{q\ast}} + (n-q).
\end{equation}
Then
\begin{itemize}
\item[(i)] If $[i]'\not{\to}_k[j]'$ then there are no full walks connecting $i$ to $j$ on $\trellisgammak$ (i.e., $i\not\to_{\trellis} j$).
\item [(ii)] If $[i]'\to_k[j]'$, then there is a full walk $W$ connecting $i$ to $j$ on $\trellisgammak$ and going through a critical node, and we have
$p_{\trellis}(W) = w_{i,\critnodes}^{\ast} + v_{\critnodes,j}^{\ast}$ if $W$ is optimal.
\end{itemize}
\end{lemma}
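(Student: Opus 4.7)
I would handle the two parts separately, using Lemmas~\ref{l:initial}~and~\ref{l:final} for the non-critical portions and Schwarz's bound for the central portion. For part (i), note that by Assumption~\ref{as:geom} every matrix in $\mathcal{X}$ shares the common digraph $\digr(\mathcal{X})$, so a full walk on $\trellis(\Gamma(k))$ from $i$ to $j$ exists iff some walk of length $k$ connects $i$ to $j$ in $\digr(\mathcal{X})$. By Definition~\ref{d:cyclicity} (applied to $\digr(\mathcal{X})$, which has cyclicity $\gamma$ by Assumption~\ref{as:ambientgamma}), every such walk has length congruent to the fixed residue determined by $[i]'$ and $[j]'$, so $[i]'\not\to_k[j]'$ forces the absence of any such walk.

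For part (ii), I would construct $W=W_1W_2W_3$ explicitly and then argue optimality. Let $W_1$ be an optimal walk in $\walks{i}{\critnodes\|}{\trellis,\init}$ of length $l_1$ ending at a critical node $f$; by Lemma~\ref{l:initial}, $l_1\leq (w^{\ast}_{i,\critnodes}-\alpha_{i,\critnodes})/\lambda_{\ast}+(n-q)$. Symmetrically, take $W_3$ optimal in $\walks{\|\critnodes}{j}{\trellis,\final}$ of length $l_3$ starting at a critical node $f'$; by Lemma~\ref{l:final}, $l_3\leq (v^{\ast}_{\critnodes,j}-\beta_{\critnodes,j})/\lambda_{\ast}+(n-q)$. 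Setting $l_2=k-l_1-l_3$, the hypothesis on $k$ yields $l_2\geq\sch(\gamma,q)$.

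The central step is to produce a walk $W_2$ of length exactly $l_2$ and weight $0$ in $\crit(\mathcal{X})$ from $f$ to $f'$. For this I would exploit Assumption~\ref{as:ambientgamma}: because $\gamma(\digr(\mathcal{X}))=\gamma(\crit(\mathcal{X}))=\gamma$, two critical nodes lie in the same cyclic class of $\crit(\mathcal{X})$ iff they lie in the same cyclic class of $\digr(\mathcal{X})$; indeed, walks in $\crit(\mathcal{X})$ are walks in $\digr(\mathcal{X})$, and by the uniqueness of residues modulo $\gamma$ in each strongly connected graph the two equivalence relations must coincide on $\critnodes$. From $[i]'\to_{l_1}[f]'$, $[f']'\to_{l_3}[j]'$ and the hypothesis $[i]'\to_k[j]'$, subtraction yields $[f]'\to_{l_2}[f']'$, and hence $[f]\to_{l_2}[f']$ in $\crit(\mathcal{X})$. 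Since $l_2\geq\sch(\gamma,q)$, Schwarz's bound applied to $\crit(\mathcal{X})$ (strongly connected on $q$ nodes with cyclicity $\gamma$) guarantees a walk of length $l_2$ from $f$ to $f'$ in $\crit(\mathcal{X})$, and by Assumption~\ref{as:vis} its weight is $0$. Concatenating, $W=W_1W_2W_3$ is a full walk of length $k$ through critical nodes, with $p_{\trellis}(W)=w^{\ast}_{i,\critnodes}+v^{\ast}_{\critnodes,j}$.

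For optimality, any full walk $W'$ from $i$ to $j$ traversing a critical node decomposes as $W'_1W'_2W'_3$, where $W'_1$ ends at the first critical node visited (so $W'_1\in\walks{i}{\critnodes\|}{\trellis,\init}$), $W'_3$ starts at the last one (so $W'_3\in\walks{\|\critnodes}{j}{\trellis,\final}$), and $W'_2$ is some walk on the trellis. Since every trellis edge has weight at most $0$ under Assumption~\ref{as:vis}, $p(W'_2)\leq 0$ and hence $p(W')\leq w^{\ast}_{i,\critnodes}+v^{\ast}_{\critnodes,j}=p(W)$, proving the optimum is attained by $W$. The step I expect to be most delicate is the cyclic-class reconciliation in paragraph three: verifying that the forced residue of $l_2$ modulo $\gamma$ in $\digr(\mathcal{X})$ is admissible inside $\crit(\mathcal{X})$, which is precisely where Assumption~\ref{as:ambientgamma} is indispensable.
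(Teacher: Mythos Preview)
Your proposal is correct and follows essentially the same route as the paper: construct the walk as an optimal initial piece, a critical middle piece whose existence is guaranteed by Schwarz's bound, and an optimal final piece, using Assumption~\ref{as:ambientgamma} to reconcile the cyclic-class residue of the middle piece between $\digr(\mathcal{X})$ and $\crit(\mathcal{X})$, and then bound any competing walk by decomposing at its first and last critical node. The paper carries out the cyclic-class bookkeeping with explicit class labels $p_1,p_2,p_3$ rather than your residue-subtraction phrasing, but the argument is the same.
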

\begin{proof}
The property $[i]'\not\to_k [j]'$ implies that there is no full walk $W$ connecting $i$ to $j$
on $\trellisgammak$.

In the case $[i]'\to_k [j]'$, we construct a walk $W'=W_{i,\critnodes}W_c W_{\critnodes,j}$ of length $k$, 
where $W_{i,\critnodes}$ be an optimal walk in $\walks{i}{\critnodes\|}{\trellis,\init}$ (see Lemma~\ref{l:initial}) , $W_{\critnodes,j}$ be an optimal walk in $\walks{\|\critnodes}{j}{\trellis,\final}$ (see Lemma~\ref{l:final}),
and $W_c$ is a walk that connects the end of $W_{i,\critnodes}$ to the beginning of $W_{\critnodes,j}$ and such that all edges of $W_c$ are critical (the existence of such $W_c$ is yet to be proved). Without loss of generality set $[i]'=\cclass'_{0}$ and $[j]'=\cclass'_{p_{3}}$: the cyclic classes of $\digr(\mathcal{X})$ to which $i$ and $j$ belong. Let $x$ be the final node of $W_{i,\critnodes}$ and let $y$ be the first node of $W_{\critnodes,j}$. Set $[x]'=\cclass'_{p_{1}}$ and 
$[y]'= \cclass'_{p_{2}}$.

By \cite[Lemma 3.4.1.iv]{BR:91} $l(W_{i,\critnodes})\equiv p_{1}\modd{\gamma}$, $l(W_{\critnodes,j})\equiv (p_{3}-p_{2})\modd{\gamma}$. Hence the congruence of the walk $W_c$ to be inserted is $(p_{3}-p_{1}-(p_{3}-p_{2}))\modd{\gamma} \equiv (p_{2}-p_{1})\modd{ \gamma}$. As the cyclicity of the critical subgraph is the same as that of the digraph, the cyclic classes of the critical subgraph are $\cclass_{0},\ldots, \cclass_{\gamma-1}$ and we can assume that the numbering is such that $\cclass_{0} \subseteq \cclass'_{0}$,\ldots, $\cclass_{\gamma-1}\subseteq \cclass'_{\gamma-1}$. Then $x\in \cclass_{p_{1}}$ and $y \in \cclass_{p_{2}}$ and by \cite[Lemma 3.4.1.iv]{BR:91} there exists a walk on the critical subgraph of length congruent to $(p_{2}-p_{1})\modd{\gamma}$. Moreover, all walks connecting $x$ to $y$ have such length and by Schwarz's bound if $k-l(W_{i,\critnodes})-l(W_{\critnodes,j}) \geq \sch(\gamma,q)$ then there is a walk of length equal to $l(W')-l(W_{i,\critnodes})-l(W_{\critnodes,j})$. According to Lemmas~\ref{l:initial} and~\ref{l:final} $l(W_{i,\critnodes}) \leq \frac{ w_{i,\critnodes}^{\ast} - \alpha_{i,\critnodes} }{\lambda_{\ast}} + (n-q)$ , $l(W_{\critnodes,j}) \leq \frac{ v_{\critnodes,j}^{\ast} - \beta_{\critnodes,j} }{\lambda_{\ast}} + (n-q)$, therefore $k$ is a sufficient length for $k-l(W_{i,\critnodes})-l(W_{\critnodes,j})$ 
to satisfy Schwarz's bound, so a walk of the form $W'=W_{i,\critnodes} W_c  W_{\critnodes,j}$ exists and $p(W')=w^{\ast}_{i,\critnodes}+v^{\ast}_{\critnodes,j}$.

Let now $W$ be an optimal full walk connecting $i$ to $j$ on $\trellis$ that passes through $\critnodes$ at least once. As it passes through the critical nodes then the walk can be decomposed into $W=\tilde{W}_{i,\critnodes} \tilde{W_c} \tilde{W}_{\critnodes,j}$ where $\tilde{W}_{i,\critnodes}$ is a walk in $\walks{i}{\critnodes\|}{\trellis,\init}$, and  $\tilde{W}_{\critnodes,j}$ is a  walk in $\walks{\|\critnodes}{j}{\trellis,\final}$, and  
$\tilde{W_c}$ connects the end of  $\tilde{W}_{i,\critnodes}$ to the beginning of $\tilde{W}_{\critnodes,j}$ on $\trellis(\Gamma(k))$.
We then have  $p_{\trellis}(\tilde{W}_{i,\critnodes})\leq p_{\trellis}(W_{i,\critnodes})$ and $p_{\trellis}(\tilde{W}_{\critnodes,j}) \leq p_{\trellis}(W_{\critnodes,j})$ and also $p_{\trellis}(\tilde{W_c}) \leq p(W_c) = 0$. Since $W$ is optimal then all of these inequalities hold with equality, 
and $p_{\trellis}(W) = w_{i,\critnodes}^{\ast}+v_{\critnodes,j}^{\ast}$, as claimed.
\end{proof}


\begin{remark}
\label{r:turnpike}
It follows from the proof that, under the conditions of this lemma
and in the case $[i]\to_k[j]$, there is an optimal full walk connecting $i$ to $j$ on $\trellis_{\Gamma(k)}$ and traversing a critical node that can be decomposed as $W = W_{i,\critnodes}W_cW_{\critnodes,j}$, where $W_{i,\critnodes}$ is an optimal walk in $\walks{i}{\critnodes\|}{\trellis,\init}$  
and $W_{\critnodes,j}$ is an optimal walk
in $\walks{\|\critnodes}{j}{\trellis,\final}$, and $W_c$ consists of edges solely in the critical subgraph.
If semigroup's generators are also strictly visualised in the sense of~\cite{SSB}, then any such optimal full walk has to be of this form. 
\end{remark}

Lemma~\ref{l:ambientcyclicitygamma} gives us the first part of the final bound for the case. In order to be able to use this lemma we must ensure that the walk must traverse $\critnodes$ hence we can use Lemma~\ref{l:avoiding} in conjunction with Lemma~\ref{l:ambientcyclicitygamma} to give us the following theorem.

\begin{theorem} \label{th:ambientgamma}
Denote $u_{i,\critnodes,j}^{\ast}= w_{i\critnodes}^{\ast}+v_{\critnodes,j}^{\ast}$. Let
\begin{align} \label{eq:ambientgammabound1}
    k \geq \max  \left( \frac{ u_{i,\critnodes,j}^{\ast} - \alpha_{i,\critnodes}  - \beta_{\critnodes,j} }{\lambda_{\ast}} + 2(n-q) + 
    \sch(\gamma,q),\ 
    \frac{u_{i,\critnodes,j}^{\ast}-\gamma_{i,j}}{\lambda_{\ast}}+(n-q+1)\right)
\end{align}
if $\gamma_{i,j}>\tropzero$ or just 
\begin{align} \label{eq:ambientgammabound2}
    k \geq \frac{ u_{i,\critnodes,j}^{\ast} - \alpha_{i,\critnodes}  - \beta_{\critnodes,j} }{\lambda_{\ast}} + 2(n-q) + 
    \sch(\gamma,q),
\end{align}
if $\gamma_{i,j}=\tropzero$, for some $i,j \in \nodes$. Then  
\begin{itemize}
\item[{\rm (i)}] If $[i]'\not\to_k [j]'$ then $\Gamma(k)_{i,j}=-\infty$,
\item[{\rm (ii)}] If $[i]'\to_k [j]'$ then $\Gamma(k)_{i,j}= u_{i,\critnodes,j}^{\ast}=w^{\ast}_{i,\critnodes} + v^{\ast}_{\critnodes,j}$.
\end{itemize}
\end{theorem}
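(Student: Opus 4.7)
The plan is to split the argument into the two statements of the theorem and, for (ii), to match lower and upper bounds on $\Gamma(k)_{i,j}$ using the two preceding lemmas in a complementary way.

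For part (i), the bound on $k$ in~\eqref{eq:ambientgammabound1} and~\eqref{eq:ambientgammabound2} is at least as large as what is required by Lemma~\ref{l:ambientcyclicitygamma}. Hence in the case $[i]'\not\to_k [j]'$ I would invoke part (i) of that lemma directly to obtain $i\not\to_{\trellis} j$, and then conclude $\Gamma(k)_{i,j}=\tropzero$ via the walk interpretation~\eqref{eq:walkrep} (or equivalently via Lemma~\ref{l:elemequiv}).

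For part (ii), assume $[i]'\to_k[j]'$. The lower bound $\Gamma(k)_{i,j}\geq u_{i,\critnodes,j}^{\ast}$ comes for free from Lemma~\ref{l:ambientcyclicitygamma}(ii): since $k$ meets the bound of that lemma, there exists a full walk from $i$ to $j$ through a node of $\critnodes$ whose weight is exactly $w_{i,\critnodes}^{\ast}+v_{\critnodes,j}^{\ast}=u_{i,\critnodes,j}^{\ast}$. For the matching upper bound, I would take an optimal full walk $W^{\ast}$ from $i$ to $j$ on $\trellis$ and split on whether $W^{\ast}$ traverses a node of $\critnodes$. In the critical-traversing case, the second clause of Lemma~\ref{l:ambientcyclicitygamma}(ii) already gives $p_{\trellis}(W^{\ast})=u_{i,\critnodes,j}^{\ast}$. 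In the critical-avoiding case, if $\gamma_{i,j}=\tropzero$ the first clause of Lemma~\ref{l:avoiding} forbids such a walk outright, giving a contradiction; while if $\gamma_{i,j}>\tropzero$ I would apply Lemma~\ref{l:avoiding} with $\omega=u_{i,\critnodes,j}^{\ast}$, noting that the second term in the maximum of~\eqref{eq:ambientgammabound1} (namely $\frac{u_{i,\critnodes,j}^{\ast}-\gamma_{i,j}}{\lambda_{\ast}}+(n-q+1)$) strictly exceeds the threshold $\frac{u_{i,\critnodes,j}^{\ast}-\gamma_{i,j}}{\lambda_{\ast}}+(n-q)$ required by that lemma (which is legitimate because $\lambda_{\ast}<0$). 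Hence $p_{\trellis}(W^{\ast})<u_{i,\critnodes,j}^{\ast}$, contradicting the lower bound already established, so this case cannot occur. Either way, $\Gamma(k)_{i,j}=u_{i,\critnodes,j}^{\ast}$.

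The only bookkeeping subtlety I foresee is coordinating the two cases $\gamma_{i,j}=\tropzero$ and $\gamma_{i,j}>\tropzero$ with the two clauses inside the maximum in~\eqref{eq:ambientgammabound1}, and making sure the strict-versus-weak inequality issue between the theorem statement and Lemma~\ref{l:avoiding} is handled cleanly by the $+1$ in $(n-q+1)$. Beyond that, the proof is essentially a packaging of Lemma~\ref{l:ambientcyclicitygamma} (which constructs the turnpike walk through $\critnodes$ and pins down its weight) and Lemma~\ref{l:avoiding} (which rules out any competitive walk that avoids $\critnodes$).
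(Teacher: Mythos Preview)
Your proposal is correct and follows essentially the same route as the paper: combine Lemma~\ref{l:ambientcyclicitygamma} (to produce and weigh the turnpike walk through $\critnodes$) with Lemma~\ref{l:avoiding} applied at $\omega=u_{i,\critnodes,j}^{\ast}$ (to dispose of any walk avoiding $\critnodes$). Your treatment is in fact slightly more careful than the paper's in that you make explicit why the $+1$ in $(n-q+1)$ is needed to convert the strict inequality required by Lemma~\ref{l:avoiding} into the weak inequality in~\eqref{eq:ambientgammabound1}.
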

\begin{proof}
We only need to prove the second part. By Lemma~\ref{l:avoiding} and taking $\omega = w_{i,\critnodes}^{\ast}+v_{\critnodes,j}^{\ast}$, if 
\begin{equation*}
    k > \frac{w_{i,\critnodes}^{\ast}+v_{\critnodes,j}^{\ast}-\gamma_{i,j}}{\lambda_{q\ast}}+(n-q)
\end{equation*}
then any walk on $\trellis(\Gamma(k))$ that does not traverse the nodes in $\critnodes$ will have weight smaller than $w_{i,\critnodes}^{\ast}+v_{\critnodes,j}^{\ast}$, or such walk will not exist if $\gamma_{i,j}=\tropzero$. Using Lemma~\ref{l:ambientcyclicitygamma}, if
\begin{align*}
     k \geq   \frac{ w_{i,\critnodes}^{\ast} - \alpha_{i,\critnodes} }{\lambda_{q\ast}} + (n-q) + \sch(\gamma,q) +  \frac{ v_{\critnodes,j}^{\ast} - \beta_{\critnodes,j} }{\lambda_{q\ast}} + (n-q)
\end{align*}
and $[i]'\to_k [j]'$ then the weight of any optimal full walk on $\trellis(\Gamma(k))$ connecting $i$ to $j$ and traversing a critical node will be equal to $w_{i,\critnodes}^{\ast}+v_{\critnodes,j}^{\ast}$. If $\gamma_{i,j}=\tropzero$, $[i]'\to_k [j]'$ and the above inequality holds, or if $\gamma_{i,j}>\tropzero$, $k$ satisfies both inequalities and $[i]\to_k [j]$, then any optimal full walk  traverses nodes in $\critnodes$ and has weight
\begin{align*}
    \Gamma(k)_{i,j} = w_{i,\critnodes}^{\ast}+v_{\critnodes,j}^{\ast}.
\end{align*}
\end{proof}

Our next aim is to rewrite Theorem~\ref{th:ambientgamma} in a CSR form, and we first want to look at the optimal walk representation of $w_{i,\critnodes}^{\ast}$ and $v_{\critnodes,j}^{\ast}$. This leads to the following lemma.

\begin{lemma} \label{l:intrep}
 We have 
 \begin{equation}
w_{i,\critnodes}^{\ast}=p(\walkslen{i}{\critnodes}{k}{\trellis,\full}),\quad  v_{\critnodes,j}^{\ast}=p(\walkslen{\critnodes}{j}{k}{\trellis,\full}).
\end{equation}

\end{lemma}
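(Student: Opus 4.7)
The plan is to prove both identities by showing two opposite inequalities; by the left/right symmetry between initial and final walks, it suffices to describe the argument for $w_{i,\critnodes}^{\ast}=p(\walkslen{i}{\critnodes}{k}{\trellis,\full})$, the dual argument carrying over verbatim for $v^{\ast}_{\critnodes,j}$. The implicit hypothesis, inherited from the context of this section, is that $k$ is at least as large as the threshold appearing in Theorem~\ref{th:ambientgamma}.

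First I would establish $p(\walkslen{i}{\critnodes}{k}{\trellis,\full})\le w^{\ast}_{i,\critnodes}$. Take any full walk $W$ of length $k$ in $\trellis(\Gamma(k))$ connecting $i$ to some node $c\in\critnodes$, and let $c'$ be the first critical node that $W$ traverses. Decompose $W=W_1W_2$, where $W_1$ runs from $i$ to $c'$ without visiting $\critnodes$ except at $c'$, and $W_2$ runs from $c'$ to $c$. Then $W_1\in\walks{i}{\critnodes\|}{\trellis,\init}$, so $p(W_1)\le w^{\ast}_{i,\critnodes}$. Assumption~\ref{as:vis} ensures that every edge in $\trellis$ has weight at most $0$, hence $p(W_2)\le 0$, and the inequality follows.

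Second, I would establish $p(\walkslen{i}{\critnodes}{k}{\trellis,\full})\ge w^{\ast}_{i,\critnodes}$. Let $W_{i,\critnodes}$ be an optimal walk in $\walks{i}{\critnodes\|}{\trellis,\init}$ of length $l=l(W_{i,\critnodes})$, ending at some critical node $c'$. Lemma~\ref{l:initial} bounds $l$ by $(w^{\ast}_{i,\critnodes}-\alpha_{i,\critnodes})/\lambda_{\ast}+(n-q)$. I extend $W_{i,\critnodes}$ by a walk $W_c$ of length $k-l$ that uses only critical edges (each of weight $0$ by Assumption~\ref{as:vis}) and ends at some node of $\critnodes$. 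Since $\crit(\mathcal{X})$ is strongly connected with cyclicity $\gamma$ by Assumption~\ref{as:ambientgamma}, and since I am free to choose any endpoint in $\critnodes$ (in particular one in the cyclic class $[c']$ shifted by $(k-l)\modd\gamma$), such $W_c$ exists whenever $k-l\ge\sch(\gamma,q)$. The bound on $l$ from Lemma~\ref{l:initial} together with the assumed lower bound on $k$ (precisely the one appearing in Theorem~\ref{th:ambientgamma}) delivers this. The concatenation $W_{i,\critnodes}W_c$ is then a full walk of length $k$ from $i$ to a critical node with weight $w^{\ast}_{i,\critnodes}+0=w^{\ast}_{i,\critnodes}$, which yields the inequality.

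The main technical point is the second direction, and specifically the existence of the critical filler walk $W_c$ of exactly the required length $k-l$. This is precisely where strong connectivity of $\crit(\mathcal{X})$ and equal cyclicities from Assumption~\ref{as:ambientgamma} are needed, in combination with a Schwarz-type length argument. Once that piece is in place, the two inequalities match and the first identity is established; the identity for $v^{\ast}_{\critnodes,j}$ is obtained by reversing the roles of initial and final walks, decomposing a full walk at the \emph{last} critical node it visits, and prepending a zero-weight critical walk to an optimal final walk.
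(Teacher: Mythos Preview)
Your proof is correct and follows the same two-inequality strategy as the paper: decompose a full walk at its first critical node for one direction, and extend an optimal initial walk by a critical filler for the other. However, you over-complicate the extension step and thereby impose an unnecessary hypothesis. You invoke Lemma~\ref{l:initial}, Schwarz's bound, and an implicit assumption that $k$ exceeds the threshold of Theorem~\ref{th:ambientgamma}, but none of this is needed. Since every node of $\crit(\mathcal{X})$ lies on a critical cycle, it has at least one outgoing critical edge; hence from any critical node one can walk along critical edges for \emph{any} prescribed non-negative number of steps, ending at some node of $\critnodes$. Moreover, $W_{i,\critnodes}$ is an initial walk on $\trellis(\Gamma(k))$, so its length $l$ is automatically at most $k$, giving $k-l\ge 0$ unconditionally. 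The paper's proof uses exactly this observation and therefore establishes the lemma for every $k$, with no lower-bound hypothesis; this is confirmed by Remark~\ref{r:intreplm} immediately afterwards. Your argument is not wrong, but it proves a weaker statement than the paper does.
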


\begin{proof}
We will prove only the first of these two equalities, as the second one can be proved in a similar way.  

Let $W_{i,\critnodes}$ be an optimal walk
in $\walks{i}{\critnodes\|}{\trellis,\init}$, with weight $w_{i,\critnodes}^{\ast}$.  
We are required to prove that
\begin{equation}
\label{e:ptoprove}
    p\left(\walks{i}{\critnodes\|}{\trellis,\init} \right) =  
    p\left(\walkslen{i}{\critnodes}{k}{\trellis,\full}\right),
\end{equation}
where on the right we have the set of full walks connecting $i$ to a critical node on $\trellis(\Gamma(k))$.
We split~\eqref{e:ptoprove} into two inequalities,
\begin{equation} \label{eq:maxrep}
  p\left(\walks{i}{\critnodes\|}{\trellis,\init} \right) \leq   
    p\left(\walkslen{i}{\critnodes}{k}{\trellis,\full}\right), \quad  p\left(\walks{i}{\critnodes\|}{\trellis,\init} \right) \geq   
    p\left(\walkslen{i}{\critnodes}{k}{\trellis,\full}\right)
\end{equation}

For the first inequality in~\eqref{eq:maxrep}, observe that 
we can concatenate $W_{i,\critnodes}$ with a walk $V$ on the critical graph which has length $l(V)=k-l(W_{i,\critnodes})$. 
The resulting walk $W_{i,\critnodes}V$ belongs to $\walkslen{i}{\critnodes}{k}{\trellis,\full}$
and has weight $w_{i,\critnodes}^*$, which proves the first inequality. For the second inequality, take an optimal walk $W^*\in \walkslen{i}{\critnodes}{k}{\trellis,\full}$,
whose weight is
$p(\walkslen{i}{\critnodes}{k}{\trellis,\full})$. By observing the first occurrence of a critical node in this walk, we represent $W^*=WV$, where $W\in\walks{i}{\critnodes\|}{\trellis,\init}$. We then have 
$p(W^*)=p(W)+p(V)\leq p(W)\leq w_{i,\critnodes}^{\ast}$ proving the second inequality. Combining both inequalities gives the equality~\eqref{e:ptoprove} and finishes the proof of $w_{i,\critnodes}^{\ast}=p(\walkslen{i}{\critnodes}{k}{\trellis,\full})$. The second part of the claim is proved similarly.
\end{proof}

\begin{remark}
\label{r:intreplm}
In the previous lemma, the length of the walks on the right-hand side does not have to be restricted to $k$.  We can obtain the following results: 
\begin{equation}
\label{e:wvwalksense}
\begin{split}
& w_{i,\critnodes}^{\ast}=p(\walkslen{i}{\critnodes}{l}{\trellis,\init})\quad \text{for any $l\geq \min\left(\frac{ w_{i,\critnodes}^{\ast} - \alpha_{i,\critnodes}}{\lambda_{q\ast}} + (n-q),k\right)$}\\
& v_{\critnodes,j}^{\ast}=p(\walkslen{\critnodes}{j}{m}{\trellis,\final})\quad \text{for any $m\geq \min\left(\frac{ v_{\critnodes,j}^{\ast} - \beta_{\critnodes,j}}{\lambda_{q\ast}} + (n-q),k\right)$}.
\end{split}
\end{equation}

\end{remark}

\if{
\begin{remark}
Observe that we do not have to fix $l$ and $m$ in Lemma~\ref{l:intrep} or $k$ in Lemma~\ref{l:intrepk}. In particular, we also have equalities
$$
w_{i,\critnodes}^{\ast}=p(\walkslen{i}{\critnodes}{l,\gamma}{\trellis}),\quad 
v_{\critnodes,j}^{\ast}=p(\walkslen{\critnodes}{j}{m,\gamma}{\trellis}).
$$
Here $\walkslen{i}{\critnodes}{l,\gamma}{\trellis}$ denotes the set of all initial walks that connect $i$ to $\critnodes$ on 
$\trellis$ and whose length is $l\modd\gamma$ and not exceeding $k$, and $\walkslen{\critnodes}{j}{m,\gamma}{\trellis}$ denotes the
set of all final walks that connect $\critnodes$ to $j$ on 
$\trellis$ and whose length is $m\modd\gamma$ and not exceeding $k$.
\end{remark}
}\fi 

\if{
\begin{remark}
It can be seen that the proof of the first part of~\eqref{eq:maxrep} uses that all critical arcs have weight $0$, and the proof of the second part of~\eqref{eq:maxrep} uses that the non-critical arcs have weight $\leq 0$. For the second part, we can do without this assumption, but only if the cyclicities of the critical graph and the ambient (associated) graph are the same (arguing that $V$ can be replaced with a critical walk, for which it should also be sufficiently long). 
\end{remark}
}\fi

\if{
There also exists a lemma for the optimal final walk. As the proof is analogous to the proof of Lemma~\ref{l:intrep} it will be omitted.
\begin{lemma} \label{l:finrep}
Let $\trellis(\Gamma(k))$ be the trellis digraph for a matrix product of length $k$ using matrices from $\mathcal{Y}$ with the associated digraph for each matrix element of $\mathcal{Y}$ being $\digr$. Let $\cclass_{j,k}$ be the set of nodes in $\critnodes$ such that there exists a walk on $\digr$ of length congruent to $k\modd{\gamma}$ connecting $m\in \cclass_{j,k}$ to $j$. If $W_{\text{opt}}$ is the optimal final walk connecting a node in $\critnodes$ to some node $j$ with weight $ v_{\critnodes,j}^{\ast}$, then
\begin{equation} \label{eq:finrep}
    v_{\critnodes,j}^{\ast} = \max_{m\in\cclass_{j,k}}\left( p\left( \walkslen{m}{j}{k}{\trellis}\right)\right).
\end{equation}
\end{lemma}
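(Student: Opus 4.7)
I would prove the identity by establishing the two inequalities separately. For the direction $\max_{m\in\cclass_{j,k}}p(\walkslen{m}{j}{k}{\trellis}) \leq v^{\ast}_{\critnodes,j}$, fix $m\in\cclass_{j,k}$ and an optimal walk $W$ from $m:0$ to $j:k$ on $\trellis(\Gamma(k))$. Since the starting node $m$ itself lies in $\critnodes$, the walk meets $\critnodes$ at least once; let $m''$ be the \emph{last} critical node visited by $W$, at some level $l'$, and split $W=W_1W_2$ with $W_1$ the portion from $m:0$ to $m'':l'$ and $W_2$ the portion from $m'':l'$ to $j:k$. By the choice of $m''$, the walk $W_2$ visits $m''$ only at its start and never meets $\critnodes$ thereafter, so $W_2\in\walks{\|\critnodes}{j}{\trellis,\final}$ and $p(W_2)\leq v^{\ast}_{\critnodes,j}$. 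Assumption~\ref{as:vis} forces every edge of $\trellis$ to have weight $\leq 0$, so $p(W_1)\leq 0$ and hence $p(W)=p(W_1)+p(W_2)\leq v^{\ast}_{\critnodes,j}$.

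For the reverse inequality $v^{\ast}_{\critnodes,j}\leq \max_{m\in\cclass_{j,k}}p(\walkslen{m}{j}{k}{\trellis})$, the strategy is to upgrade an optimal final walk to a full walk of length exactly $k$ by a critical padding prefix. Take an optimal $W^{\ast}\in\walks{\|\critnodes}{j}{\trellis,\final}$ starting at some $m_0:l$ with $m_0$ lying in a critical component $\crit_\nu$, and prepend a walk $V$ of length $l$ that stays in $\crit_\nu$ and ends at $m_0$. Provided $l\geq T(S_\nu)$, one can pick any $m'\in\critnodes_\nu$ in the cyclic class $[m_0]-l\pmod{\gamma_\nu}$ and realise such a walk $V$ from $m':0$ to $m_0:l$; its edges are all critical and carry weight $0$ by Assumption~\ref{as:vis}. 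The concatenation $VW^{\ast}$ is then a walk on $\trellis(\Gamma(k))$ from $m':0$ to $j:k$ of weight $v^{\ast}_{\critnodes,j}$, and by the geometric equivalence (Assumption~\ref{as:geom}) its projection to $\digr(\mathcal{Y})$ is a walk of length $k$ from $m'$ to $j$, so $m'\in\cclass_{j,k}$. This gives $p(\walkslen{m'}{j}{k}{\trellis})\geq v^{\ast}_{\critnodes,j}$, as needed.

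\textbf{Main obstacle.} The delicate ingredient is producing the prefix walk $V$ of length \emph{exactly} $l$ within $\crit_\nu$ terminating at the prescribed node $m_0$; this requires $l\geq T(S_\nu)$. Here Lemma~\ref{l:final} is essential: it bounds $l(W^{\ast})=k-l$ by $(v^{\ast}_{\critnodes,j}-\beta_{\critnodes,j})/\lambda_{\ast}+(n-q)$, which forces $l\geq T(S_\nu)$ as soon as $k$ is sufficiently large. For small $k$, however, the set $\walkslen{m}{j}{k}{\trellis}$ can be empty even for $m\in\cclass_{j,k}$, so the lemma should be read with the tacit hypothesis that $k$ exceeds the corresponding lower bound, in parallel with Remark~\ref{r:intreplm} for the initial-walk identity of Lemma~\ref{l:intrep}.
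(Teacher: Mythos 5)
Your two-inequality split is exactly the argument the paper intends: the lemma is left without proof on the grounds that it is analogous to Lemma~\ref{l:intrep}, and the proof of Lemma~\ref{l:intrep} uses the same decomposition — split an optimal full walk at its first (in your mirrored setting, last) visit to $\critnodes$ for one inequality, and pad an optimal initial (here, final) walk with a zero-weight critical walk to reach full length $k$ for the other. Your first inequality is correct as written.

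The one point to push back on is your ``main obstacle'': it is not an obstacle, and the tacit largeness hypothesis on $k$ that you propose is not needed. The difficulty arises only because you insist on prescribing the starting node $m'$ (a specific cyclic class) in advance and then invoking $l\geq T(S_{\nu})$ to realise a critical walk of length $l$ from that prescribed node to $m_0$. Instead, note that every critical node lies on a critical cycle and hence has at least one \emph{incoming} critical edge; so you can simply walk backwards from $m_0$ along critical edges for $l$ steps and let $m'$ be whichever critical node you land on. This produces a zero-weight prefix $V$ of length exactly $l$ ending at $m_0$ for every $l\geq 0$, with no appeal to $T(S_{\nu})$ or to Lemma~\ref{l:final}. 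The concatenation $VW^{\ast}$ is then a full walk of length $k$ of weight $v^{\ast}_{\critnodes,j}$, and, as you already observe, its mere existence places $m'$ in $\cclass_{j,k}$, since a walk of length $k$ is trivially of length congruent to $k$ modulo $\gamma$. This is precisely how the paper's proof of Lemma~\ref{l:intrep} handles the corresponding step (the critical walk $V$ is appended forwards, edge by edge, with no threshold condition); the backward version works identically, so the identity holds for all $k$ without any additional hypothesis.
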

}\fi

\if{
\begin{lemma} \label{l:CSRwv}
Let $\Gamma(k)=A_{1} \otimes \ldots \otimes A_{k}$ be a matrix product of length $k$ using matrices from $\mathcal{Y}$ and let $CS^{k\modd{\gamma}} R[\Gamma(k)]$ be the CSR decomposition of $\Gamma(k)$ with respect to Definition~\ref{d:CSR1a} using $t\geq\gamma\text{Wi}\left(\left\lfloor \frac{q}{\gamma}\right\rfloor\right) + q\modd{\gamma}$. If $(CS^{k\modd{\gamma}} R[\Gamma(k)])_{i,j}\neq -\infty$ then $(CS^{k\modd{\gamma}} R[\Gamma(k)])_{i,j}=w_{i,\critnodes}^{\ast} + v_{\critnodes,j}^{\ast}$.
\end{lemma}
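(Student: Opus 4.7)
The plan is to prove both inequalities using the walk interpretation of the CSR term on the symmetric extension $\trellis'$ given by Lemma~\ref{r:CSRrep}: $(CS^{k\modd{\gamma}}R[\Gamma(k)])_{i,j}=p(\walkslen{i}{j}{2k+v}{\trellis',\full})$ with $v=(t+1)\gamma-k\modd{\gamma}$. Every walk in this set decomposes canonically as $W_1W_cW_2$, where $W_1$ has length $k$ on the first trellis copy, $W_c$ has length $v$ on the central critical block (so $p(W_c)=0$), and $W_2$ has length $k$ on the second trellis copy; the two joining nodes are forced to lie in $\critnodes$. For the upper bound this decomposition directly gives $p(W_1)\le p(\walkslen{i}{\critnodes}{k}{\trellis,\full})=w^*_{i,\critnodes}$ and $p(W_2)\le v^*_{\critnodes,j}$ by Lemma~\ref{l:intrep}, so $(CS^{k\modd{\gamma}}R[\Gamma(k)])_{i,j}\le w^*_{i,\critnodes}+v^*_{\critnodes,j}$ follows immediately.

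For the lower bound I will pick $x^*\in\critnodes$ realising $w^*_{i,\critnodes}=p(\walkslen{i}{x^*}{k}{\trellis,\full})$ and $y^*\in\critnodes$ realising $v^*_{\critnodes,j}=p(\walkslen{y^*}{j}{k}{\trellis,\full})$, and then paste the corresponding optimal initial and final walks together through a critical walk of length exactly $v$ from $x^*$ to $y^*$. The whole content of this direction is producing the central walk. Under Assumption~\ref{as:ambientgamma} the cyclic classes of $\crit(\mathcal{X})$ coincide (via $\cclass_l\subseteq\cclass'_l$) with those of $\digr(\mathcal{X})$, so any length-$k$ walk on $\trellis$ from $i$ to $x^*$ forces $[x^*]\equiv[i]'+k\modd{\gamma}$, and any length-$k$ walk from $y^*$ to $j$ forces $[y^*]\equiv[j]'-k\modd{\gamma}$. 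The assumption that the CSR entry is finite supplies at least one witnessing joining pair $(x_0,y_0)$ with $[y_0]\equiv[x_0]+v\modd{\gamma}$; combining these congruences yields $[i]'\to_k[j]'$ and hence $[y^*]\equiv[x^*]+v\modd{\gamma}$. The hypothesis $t\ge\gamma\wiel(\lfloor q/\gamma\rfloor)+q\modd{\gamma}=\sch(\gamma,q)$ gives $v\ge t\gamma\ge T(S)$ via the Hartmann--Arguelles bound, so a critical walk of length exactly $v$ exists between any cyclic-class-compatible pair of nodes of $\crit(\mathcal{X})$, in particular between $x^*$ and $y^*$. Concatenating yields a walk in $\walkslen{i}{j}{2k+v}{\trellis',\full}$ of weight $w^*_{i,\critnodes}+v^*_{\critnodes,j}$, completing the matching lower bound.

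The main obstacle is the cyclic-class matching step. Since $x^*$ and $y^*$ are chosen to maximise the initial and final weights independently, there is no a priori reason the corresponding critical walk of length $v$ between them should exist; without control the lower bound could be strict. Assumption~\ref{as:ambientgamma} is what allows cyclic-class matching to be deduced from the mere finiteness of the CSR entry, and the Schwarz-type hypothesis on $t$ is what upgrades cyclic-class compatibility into the actual existence of a critical walk of length exactly $v$ between the chosen optima.
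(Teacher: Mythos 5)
Your proposal is correct and follows essentially the same route as the paper: both use the optimal-walk interpretation of the CSR term on the symmetric extension $\trellis'$, decompose walks as initial/critical/final blocks, invoke Lemma~\ref{l:intrep} for the upper bound, and use the coincidence of cyclic classes under Assumption~\ref{as:ambientgamma} together with $v\geq T(S)$ to produce the connecting critical walk for the lower bound. The only cosmetic difference is that the paper restricts the joining nodes to the cyclic classes $\cclass_{i,k}$ and $\cclass_{k,j}$ and shows the middle walk exists between any such pair, whereas you fix specific maximisers $x^*,y^*$ and argue class-compatibility from the finiteness of the CSR entry; these are interchangeable.
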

}\fi

We now establish the connection between the previous Lemma and CSR.
\begin{lemma}
\label{l:CSRwv}
We have one of the following cases:
\begin{itemize}
    \item[{\rm (i)}] $(CS^{k\modd{\gamma}}R[\Gamma(k)])_{i,j}=\tropzero$ if 
    $[i]'\not\to_k [j]'$,
    \item[{\rm (ii)}] $(CS^{k\modd{\gamma}}R[\Gamma(k)])_{i,j}=w_{i,\critnodes}^*+v_{\critnodes,j}^*$ if $[i]'\to_k[j]'$.
\end{itemize}
\end{lemma}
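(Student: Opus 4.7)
The plan is to exploit the optimal walk interpretation provided by Lemma~\ref{r:CSRrep}, which expresses $(CS^{k\modd{\gamma}}R[\Gamma(k)])_{i,j}$ as $p(\walkslen{i}{j}{2k+v}{\trellis',\full})$ with $v=(t+1)\gamma-k\modd{\gamma}$. According to Definition~\ref{d:trellisext}, any walk of length $2k+v$ from $i:0$ to $j:2k+v$ on $\trellis'$ decomposes naturally into three parts: an initial full walk $W_1$ of length $k$ on $\trellis$, a middle walk $W_2$ of length $v$ whose edges lie entirely in $\crit(\mathcal{Y})$ (hence of weight $0$ under Assumption~\ref{as:vis}), and a final full walk $W_3$ of length $k$ on $\trellis$. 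Since $W_2$ uses only critical edges, its endpoints must lie in $\critnodes$, forcing $W_1$ to end at some $s\in\critnodes$ and $W_3$ to start at some $s'\in\critnodes$.

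For part (i), I would project any hypothetical such walk to $\digr(\mathcal{X})$. Its length satisfies $2k+v\equiv k\pmod{\gamma}$, so its existence would force $[i]'\to_k[j]'$; the contrapositive gives $(CS^{k\modd{\gamma}}R[\Gamma(k)])_{i,j}=\tropzero$.

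For part (ii), I would prove two inequalities. The upper bound follows from the decomposition $W=W_1W_2W_3$ above: Lemma~\ref{l:intrep} gives $p(W_1)\leq w^*_{i,\critnodes}$ and $p(W_3)\leq v^*_{\critnodes,j}$, while $p(W_2)=0$, so $p(W)\leq w^*_{i,\critnodes}+v^*_{\critnodes,j}$. For the matching lower bound, I would take $W_1$ and $W_3$ to be the two optimal walks provided by Lemma~\ref{l:intrep} and glue them with a suitable critical middle walk of length $v$.

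The main obstacle is verifying that such a connecting critical walk exists. A walk in $\walkslen{i}{\critnodes}{k}{\trellis,\full}$ must terminate in the cyclic class $\cclass'_a$ of $\digr(\mathcal{X})$ with $a\equiv [i]'+k\pmod{\gamma}$, and by Assumption~\ref{as:ambientgamma} together with the inclusion $\cclass_a\subseteq\cclass'_a$ the critical endpoint $s$ lies in the critical cyclic class $\cclass_a$. Similarly $s'\in\cclass_b$ with $b\equiv [j]'-k\pmod{\gamma}$. Under the hypothesis $[i]'\to_k[j]'$ we have $[j]'\equiv [i]'+k\pmod{\gamma}$, hence $b-a\equiv -k\equiv v\pmod{\gamma}$. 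Since $\crit(\mathcal{X})$ is strongly connected with cyclicity $\gamma$ and $v=t\gamma+(\gamma-k\modd{\gamma})>t\gamma\geq T(S)$, a walk on $\crit(\mathcal{X})$ of exact length $v$ from $s$ to $s'$ does exist, and by visualisation its weight is $0$. Concatenating yields $W_1W_2W_3$ with total weight $w^*_{i,\critnodes}+v^*_{\critnodes,j}$, completing the lower bound and hence the equality.
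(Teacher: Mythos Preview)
Your proof is correct and follows essentially the same approach as the paper: you use Lemma~\ref{r:CSRrep} to pass to the optimal-walk interpretation on $\trellis'$, decompose any full walk as $W_1W_2W_3$ with $W_2$ on the critical graph, handle (i) via the congruence $2k+v\equiv k\pmod\gamma$, and handle (ii) by combining Lemma~\ref{l:intrep} with a cyclic-class argument (using Assumption~\ref{as:ambientgamma} and $v\geq T(S)$) to guarantee existence of the connecting critical walk of length $v$. The paper organises the argument for (ii) slightly differently, restricting the endpoints $l,m$ to the cyclic classes $\cclass_{i,k},\cclass_{k,j}$ up front and then checking $\cclass_{i,k}\to_{(\gamma-k\modd\gamma)}\cclass_{k,j}$, but this is the same computation you perform.
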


\begin{proof}
By Lemma~\ref{r:CSRrep} we have $p\left(\walkslen{i}{j}{2k+v}{\trellis',\full}\right)= (CS^{k\modd{\gamma}}R[\Gamma(k)])_{i,j}$, 
where $v=(t+1)\gamma-k\modd{\gamma}$ and $t\gamma\geq T(S)$,
and let $W\in \walkslen{i}{j}{2k+v}{\trellis',\full}$ be optimal. $W$ can be decomposed as $W_1W_2W_3$ where $W_1$ is a full walk (of length $k$) connecting $i$ to some $l\in\critnodes$ on $\trellis$, $W_{3}$ is a (full) walk of length $k$ connecting some $m\in \critnodes$ to $j$ and $W_{2}$ is a walk on the critical graph of length $v$ connecting the end of $W_{1}$ to the beginning of $W_{3}$. In formula,
\begin{equation}
\label{e:optwalkCSR2}
\begin{split}
   (CS^{k\modd{\gamma}}R[\Gamma(k)])_{i,j} &=
  \max\{p(W_1)+p(W_2)+p(W_3)\colon\\ & W_1\in\walkslen{i}{l}{k}{\trellis,\full},\  W_2\in\walkslen{l}{m}{v}{\crit},\ W_3\in\walkslen{m}{j}{k}{\trellis,\full},\ 
  l,m\in\critnodes\} 
\end{split}
\end{equation}
If the weights of $W_1$, $W_2$ and $W_3$ in~\eqref{e:optwalkCSR2} are finite then $[i]'\to_k [l]'$, $[l]'\to_v [m]'$ and $[m]'\to_k [j]'$, hence $[i]'\to_k [j]'$. Thus 
$(CS^tR[\Gamma(k)]_{i,j})> \tropzero$ implies $[i]'\to_k [j]'$ proving (i).

As the cyclicity of the associated graph is the same as the cyclicity of the critical graph, Lemma~\ref{l:intrep} implies that 
\begin{equation}
\label{e:astinterp} 
w_{i,\critnodes}^{\ast}=p(\walkslen{i}{\cclass_{i,k}}{k}{\trellis}),\quad  v_{\critnodes,j}^{\ast}=p(\walkslen{\cclass_{k,j}}{j}{k}{\trellis}),
\end{equation}
where $\cclass_{i,k}=\cclass'_{i,k}\cap\critnodes$ is the cyclic class of $\crit(\mathcal{X})$ that can be found by intersecting with critical nodes $\critnodes$ the cyclic class $\cclass'_{i,k}$ of $\digr$ defined by $[i]'\to_k \cclass'_{i,k}$. Similarly,  $\cclass_{k,j}=\cclass'_{k,j}\cap\critnodes$ is the cyclic class of $\crit(\mathcal{X})$ that can be found by intersecting with critical nodes $\critnodes$ the cyclic class $\cclass'_{k,j}$ of $\digr$ defined by $\cclass'_{k,j}\to_k [j]'$.  

Now note that in~\eqref{e:optwalkCSR2} we can similarly restrict $l$ to $\cclass_{i,k}$ and $m$ to $\cclass_{k,j}$,
which transforms it to
\begin{equation}
\label{e:optwalkCSR3}
\begin{split}
  (CS^{k\modd{\gamma}}R[\Gamma(k)])_{i,j} &=
  \max\{p(W_1)+p(W_2)+p(W_3)\colon\\ & W_1\in\walkslen{i}{l}{k}{\trellis},\  W_2\in\walkslen{l}{m}{v}{\crit},\ W_3\in\walkslen{m}{j}{k}{\trellis},\ 
  l\in\cclass_{i,k},\ m\in\cclass_{k,j}\} 
\end{split}
\end{equation}
Note that if a walk $W_2$ exists between any 
$l\in\cclass_{i,k}$ and $m\in\cclass_{k,j}$ then using~\eqref{e:astinterp} we immediately obtain 
$(CS^{k\modd{\gamma}}R[\Gamma(k)])_{i,j}=w^{\ast}_{i,\critnodes}+v^{\ast}_{\critnodes,j}$. Thus it remains to show existence of $W_2\in\walkslen{l}{m}{v}{\crit}$ between any $l\in\cclass_{i,k}$ and $m\in\cclass_{k,j}$.  
For this note that since $v= (t+1)\gamma-k\modd\gamma\geq T(S)$, either $\cclass_{i,k}\to_{(\gamma-k\modd\gamma)} \cclass_{k,j}$ and a walk on $\crit(\mathcal{X})$ of length $v$ exists between each pair of nodes in $\cclass_{i,k}$ and $\cclass_{k,j}$, or $\cclass_{i,k}\not\to_{(\gamma-k\modd\gamma)} \cclass_{k,j}$ and then no such walk exists. We thus have to check that $\cclass_{i,k}\to_{(\gamma-k\modd\gamma)} \cclass_{k,j}$ on $\digr$. But this follows since we have $[i]'\to_k [j]'$, and since in the sequence $[i]'\to_k \cclass'_{i,k}\to_l \cclass'_{k,j}\to_k [j]'$ we then must have $l\equiv_{\gamma} \gamma-k\modd\gamma$.

\if{
Therefore for any $l\in \tilde{C_{i}}$ and $m \in \tilde{C_{j}}$ we have
\begin{equation*}
    p(W_{l,m})=p\left(\walkslen{i}{l}{k}{\trellis}\right) + p\left(\walkslen{l}{m}{v}{\digr(S)}\right) + p\left(\walkslen{m}{j}{k}{\trellis}\right).
\end{equation*}
Let $P=\{W_{l,m}\vert l\in \tilde{C_{i}} \text{ and } m\in \tilde{C_{j}} \}$. It is easy to see that $W^{\ast}\in P$ and as $W^{\ast}$ is an optimal walk then its weight must be the largest weight of all the walks in $P$, hence
\begin{align*}
     p\left(\walkslennode{i}{j}{2k+v}{\trellis'}{\critnodes}\right)  & = \max_{l\in \tilde{C_{i}},m \in \tilde{C_{j}}} \left( p\left(\walkslen{i}{l}{k}{\trellis}\right) + p\left(\walkslen{l}{m}{v}{\digr(S)}\right) + p\left(\walkslen{m}{j}{k}{\trellis}\right)\right) \\
     & = \max_{l\in \tilde{C_{i}}} \left(p\left(\walkslen{i}{l}{k}{\trellis}\right)\right) + p\left(\walkslen{l}{m}{v}{\digr(S)}\right) + \max_{m \in \tilde{C_{j}}} \left(p\left(\walkslen{m}{j}{k}{\trellis}\right)\right)
\end{align*}
As the walk $W_{2}$ is situated entirely on $\digr(S)$ it must have weight $0$, so by Lemma~\ref{l:intrep} and~\ref{l:finrep},
\begin{equation*}
    \max_{l\in \tilde{C_{i}}} \left(p\left(\walkslen{i}{l}{k}{\trellis}\right)\right) + p\left(\walkslen{l}{m}{v}{\digr(S)}\right) + \max_{m \in \tilde{C_{j}}} \left(p\left(\walkslen{m}{j}{k}{\trellis}\right)\right) = w^{\ast}_{i,\critnodes} + 0 + v^{\ast}_{\critnodes,j}.
\end{equation*}
Thus the proof is complete.
}\fi
\end{proof}

Combining Theorem~\ref{th:ambientgamma} and Lemma~\ref{l:CSRwv} we
obtain the following result.

\begin{theorem} \label{th:ambientgammaCSR}
Denote $u_{i,\critnodes,j}^{\ast}= w_{i\critnodes}^{\ast}+v_{\critnodes,j}^{\ast}$. Let $k$ be greater than or equal to
\begin{align*} 
     \max\left(\max_{i,j} \frac{ u_{i,\critnodes,j}^{\ast} - \alpha_{i,\critnodes}  - \beta_{\critnodes,j} } + 2(n-q) + 
    \sch(\gamma,q),\, \max_{i,j\colon\gamma_{i,j}>\tropzero} 
    \frac{u_{i,\critnodes,j}^{\ast}-\gamma_{i,j}}{\lambda_{\ast}}+n-q+1\right)
\end{align*}
Then $\Gamma(k)= CS^{k\modd{\gamma}}R[\Gamma(k)]$. 
\end{theorem}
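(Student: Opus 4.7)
The plan is to establish the matrix identity $\Gamma(k)=CS^{k\modd{\gamma}}R[\Gamma(k)]$ entrywise by combining the two preceding results: Theorem~\ref{th:ambientgamma}, which computes the entries of $\Gamma(k)$ under the hypothesis of the theorem, and Lemma~\ref{l:CSRwv}, which computes the entries of $CS^{k\modd{\gamma}}R[\Gamma(k)]$ in a matching form. Since the bound on $k$ is stated as a maximum over all pairs $(i,j)$, the first step is to observe that, once $k$ satisfies this bound, Theorem~\ref{th:ambientgamma} applies to every pair $(i,j)\in\nodes\times\nodes$ simultaneously (with the appropriate branch of the bound being used depending on whether $\gamma_{i,j}=\tropzero$ or $\gamma_{i,j}>\tropzero$).

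Next I would fix an arbitrary pair $(i,j)$ and split into the two cases provided by the dichotomy in the statements of Theorem~\ref{th:ambientgamma} and Lemma~\ref{l:CSRwv}. In the case $[i]'\not\to_k[j]'$, Theorem~\ref{th:ambientgamma}(i) gives $\Gamma(k)_{i,j}=\tropzero$, and Lemma~\ref{l:CSRwv}(i) gives $(CS^{k\modd{\gamma}}R[\Gamma(k)])_{i,j}=\tropzero$, so the two entries coincide. In the case $[i]'\to_k[j]'$, Theorem~\ref{th:ambientgamma}(ii) yields $\Gamma(k)_{i,j}=w_{i,\critnodes}^{\ast}+v_{\critnodes,j}^{\ast}$, while Lemma~\ref{l:CSRwv}(ii) yields $(CS^{k\modd{\gamma}}R[\Gamma(k)])_{i,j}=w_{i,\critnodes}^{\ast}+v_{\critnodes,j}^{\ast}$, and again the entries agree. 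Since $(i,j)$ is arbitrary, the matrix identity follows.

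The main obstacle is verifying that the bound in the statement is truly sufficient to invoke both tools in tandem. On the $\Gamma(k)$ side, Theorem~\ref{th:ambientgamma} requires $k$ to dominate both the Schwarz-type quantity (needed to insert a critical connecting walk between the optimal initial and final pieces) and the avoidance quantity coming from Lemma~\ref{l:avoiding}; both are built into the max in~\eqref{eq:ambientgammabound1}. On the CSR side, Lemma~\ref{l:CSRwv} implicitly requires that the optimal values $w^{\ast}_{i,\critnodes}$ and $v^{\ast}_{\critnodes,j}$ actually be realised by full walks on $\trellis$ of length $k$, for which Remark~\ref{r:intreplm} supplies precisely the bounds $\frac{w^{\ast}_{i,\critnodes}-\alpha_{i,\critnodes}}{\lambda_{\ast}}+(n-q)$ and $\frac{v^{\ast}_{\critnodes,j}-\beta_{\critnodes,j}}{\lambda_{\ast}}+(n-q)$; both of these are dominated by the first term in the max of Theorem~\ref{th:ambientgammaCSR}, since $u^{\ast}_{i,\critnodes,j}=w^{\ast}_{i,\critnodes}+v^{\ast}_{\critnodes,j}$ and $\lambda_{\ast}<0$. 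Hence the single bound stated in the theorem simultaneously unlocks both Theorem~\ref{th:ambientgamma} and Lemma~\ref{l:CSRwv} for every entry, and the claimed equality of matrices follows by gluing the entrywise equalities.
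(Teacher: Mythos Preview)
Your proposal is correct and follows exactly the paper's approach, which simply states that the result is obtained by combining Theorem~\ref{th:ambientgamma} and Lemma~\ref{l:CSRwv}. Your extra verification that the bound suffices for Lemma~\ref{l:CSRwv} is slightly overcautious, since Lemma~\ref{l:intrep} (on which Lemma~\ref{l:CSRwv} relies) holds without any additional hypothesis on $k$: the optimal initial walk $W_{i,\critnodes}$ lives on $\trellis(\Gamma(k))$ and therefore already has length at most $k$.
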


As with Theorem~\ref{th:weakcsr} this bound requires $\Gamma(k)$ in order to calculate the bound, which makes it implicit, but as with Corollary~\ref{c:weakcsrinf}  we can use $w_{i,\critnodes}\leq w^*_{i,\critnodes}$ and $v_{\critnodes,j}\leq v^*_{\critnodes,j}$ to give us an explicit bound.

\begin{corollary} \label{c:ambientgamma}
Denote $u_{i,\critnodes,j}= w_{i\critnodes}+v_{\critnodes,j}$. Let $k$ be greater than or equal to
\begin{align*} 
\max\left(\max_{i,j} \frac{ u_{i,\critnodes,j} - \alpha_{i,\critnodes}  - \beta_{\critnodes,j} }{\lambda_{\ast}} + 2(n-q)+
\sch(\gamma,q),\,
 \max_{i,j\colon\gamma_{i,j}>\tropzero}     \frac{u_{i,\critnodes,j}-\gamma_{i,j}}{\lambda_{\ast}}+n-q+1\right)
\end{align*}
Then $\Gamma(k)= CS^{k\modd{\gamma}}R[\Gamma(k)]$. 
\end{corollary}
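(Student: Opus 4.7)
The plan is to derive this corollary from Theorem~\ref{th:ambientgammaCSR} by showing that the explicit bound stated here dominates the implicit bound involving the starred quantities $w^{\ast}_{i,\critnodes}$ and $v^{\ast}_{\critnodes,j}$. The only real work is to compare $u_{i,\critnodes,j}$ with $u^{\ast}_{i,\critnodes,j}$, and then to exploit that $\lambda_{\ast}<0$ (guaranteed by Assumptions~\ref{as:geom} and~\ref{as:vis}) to reverse inequalities when dividing.

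First I would establish the pointwise comparison $w_{i,\critnodes} \leq w^{\ast}_{i,\critnodes}$ and $v_{\critnodes,j} \leq v^{\ast}_{\critnodes,j}$, so that $u_{i,\critnodes,j}\leq u^{\ast}_{i,\critnodes,j}$. The argument is a direct lifting: any optimal path on $\digr(\Ainf)$ from $i$ to a critical node is, by definition of a path, a simple sequence of nodes visiting $\critnodes$ only at its endpoint, so it can be lifted to the trellis $\trellis(\Gamma(k))$ as an initial walk of the same node-sequence. Each traversed edge $(p,r)$ is an edge of $\digr(\mathcal{Y})$ by Assumption~\ref{as:infgeom}, and its weight on the trellis equals $(A_{\ell})_{p,r}\geq (\Ainf)_{p,r}$ by the definition of $\Ainf$. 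Summing over the path, the lifted walk belongs to $\walks{i}{\critnodes\|}{\trellis,\init}$ and has weight at least $w_{i,\critnodes}$, so $w_{i,\critnodes}\leq w^{\ast}_{i,\critnodes}$. The analogous lifting of final paths gives $v_{\critnodes,j}\leq v^{\ast}_{\critnodes,j}$, and adding yields $u_{i,\critnodes,j}\leq u^{\ast}_{i,\critnodes,j}$.

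Next I would use $\lambda_{\ast}<0$ to show that the bound here majorises the bound of Theorem~\ref{th:ambientgammaCSR}. For any fixed $i,j$ and any constants $D,E$ independent of the $u$-quantities, the inequality $u_{i,\critnodes,j}\leq u^{\ast}_{i,\critnodes,j}$ and $\lambda_{\ast}<0$ together give
\begin{equation*}
\frac{u_{i,\critnodes,j}-D}{\lambda_{\ast}}+E\;\geq\;\frac{u^{\ast}_{i,\critnodes,j}-D}{\lambda_{\ast}}+E,
\end{equation*}
so taking this with $(D,E)=(\alpha_{i,\critnodes}+\beta_{\critnodes,j},\,2(n-q)+\sch(\gamma,q))$ for the first maximum and $(D,E)=(\gamma_{i,j},\,n-q+1)$ (restricted to $\gamma_{i,j}>\tropzero$) for the second maximum, and then taking the maximum over $i,j$, shows that the bound in the corollary is at least the bound in Theorem~\ref{th:ambientgammaCSR}. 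Hence any $k$ satisfying the hypothesis here also satisfies the hypothesis of Theorem~\ref{th:ambientgammaCSR}, and we conclude $\Gamma(k)=CS^{k\modd{\gamma}}R[\Gamma(k)]$.

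There is essentially no obstacle; the slightly delicate point is the lifting argument for $w_{i,\critnodes}\leq w^{\ast}_{i,\critnodes}$, where one must verify that a path on $\digr(\Ainf)$ is admissible as a member of $\walks{i}{\critnodes\|}{\trellis,\init}$ (i.e., it visits $\critnodes$ exactly once at its endpoint) — but this is immediate from the definition of a path. Everything else is monotonicity of $(\cdot)/\lambda_{\ast}$ under the sign of $\lambda_{\ast}$.
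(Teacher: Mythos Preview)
Your approach is correct and is exactly the paper's: the paper simply records $w_{i,\critnodes}\leq w^{\ast}_{i,\critnodes}$ and $v_{\critnodes,j}\leq v^{\ast}_{\critnodes,j}$ in the sentence preceding the corollary and then invokes Theorem~\ref{th:ambientgammaCSR} via the negativity of $\lambda_{\ast}$, which is precisely your argument. One small slip in your lifting step: a path from $i$ to $\critnodes$ need not, \emph{by definition of a path}, avoid $\critnodes$ at intermediate nodes; the correct justification is that under Assumption~\ref{as:vis} all edge weights are $\leq 0$, so truncating at the first critical node cannot decrease the weight, and hence an optimal path to $\critnodes$ may be taken to meet $\critnodes$ only at its endpoint (after which your lifting goes through verbatim).
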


We will now present an example of this bound in action.

Let $\digr(G)$ be the eight node digraph with the following structure:

\begin{center}
 \begin{tikzpicture}[thick]
\coordinate (1) at (-2,1.25);
\coordinate (2) at (2,1.25);
\coordinate (3) at (2,-1.25);
\coordinate (4) at (-2,-1.25);
\coordinate (5) at (-6,1.25);
\coordinate (6) at (-6,-1.25);
\coordinate (7) at (6,1.25);
\coordinate (8) at (6,-1.25);

\draw [red, ->,shorten >= 0.15cm]   (1) to (2);
\draw [red, ->,shorten >= 0.15cm]   (2) to[out=-45,in=45] (3);
\draw [red, ->,shorten >= 0.15cm]   (3) to[out=135,in=-135] (2);
\draw [red, ->,shorten >= 0.15cm]   (3) to (4);
\draw [red, ->,shorten >= 0.15cm]   (4) to[out=135,in=-135] (1);
\draw [red, ->,shorten >= 0.15cm]   (1) to[out=-45,in=45] (4);

\draw [black, ->,shorten >= 0.15cm]   (4) to (6);
\draw [black, ->,shorten >= 0.15cm]   (6) to (5);
\draw [black, ->,shorten >= 0.15cm]   (5) to (1);
\draw [black, ->,shorten >= 0.15cm]   (2) to (7);
\draw [black, ->,shorten >= 0.15cm]   (7) to (8);
\draw [black, ->,shorten >= 0.15cm]   (8) to (3);
\draw [black, ->,shorten >= 0.3cm]   (5) to[out=20,in=160] (7);
\draw [black, ->,shorten >= 0.3cm]   (8) to[out=-160,in=-20] (6);

\draw[fill=black] (1) circle(0.1);
\node[label={$(1)$}] at (1) {a};
\draw[fill=black] (2) circle(0.1);
\node[label={$(2)$}] at (2) {a};
\draw[fill=black] (3) circle(0.1);
\node[label=below:{$(3)$}] at (3) {a};
\draw[fill=black] (4) circle(0.1);
\node[label=below:{$(4)$}] at (4) {a};
\draw[fill=black] (5) circle(0.1);
\node[label={$(5)$}] at (5) {a};
\draw[fill=black] (6) circle(0.1);
\node[label=below:{$(6)$}] at (6) {a};
\draw[fill=black] (7) circle(0.1);
\node[label={$(7)$}] at (7) {a};
\draw[fill=black] (8) circle(0.1);
\node[label=below:{$(8)$}] at (8) {a};
\end{tikzpicture}    
\end{center}
along with the associated weight matrix.
\begin{align*}
    A = \begin{pmatrix}
    \tropzero  &   0 & \tropzero &    0 & \tropzero & \tropzero & \tropzero & \tropzero \\
  \tropzero & \tropzero  &   0 & \tropzero &  \tropzero & \tropzero & A_{2,7} & \tropzero \\
  \tropzero &    0 & \tropzero &    0 & \tropzero & \tropzero & \tropzero & \tropzero \\
     0 & \tropzero & \tropzero & \tropzero & \tropzero &   A_{4,6} & \tropzero & \tropzero \\
     A_{5,1} & \tropzero & \tropzero & \tropzero & \tropzero & \tropzero &   A_{5,7} & \tropzero \\
  \tropzero & \tropzero & \tropzero & \tropzero &   A_{6,5} & \tropzero & \tropzero & \tropzero \\
  \tropzero & \tropzero & \tropzero & \tropzero & \tropzero & \tropzero & \tropzero &   A_{7,8} \\
  \tropzero & \tropzero &    A_{8,3} & \tropzero & \tropzero &    A_{8,6} & \tropzero & \tropzero 
    \end{pmatrix}
\end{align*}
There are three critical cycles in this digraph, one cycle of length $4$ traversing $1\to2\to3\to4$, and two cycles of length $2$ traversing $1\to4\to1$ and $2\to3\to2$ respectively. There are also cycles of length $4$, $6$ and $8$ which means that the cyclicity of the whole digraph is $2$, which is the same cyclicity of the critical subgraph. Therefore Assumption~\ref{as:ambientgamma} is satisfied and we can continue.

The semigroup of matrices $\mathcal{X}$ used by this example will be generated by these five matrices:
\small\begin{align*}
        A_{1} & = \begin{pmatrix}
    \tropzero  &   0 & \tropzero &    0 & \tropzero & \tropzero & \tropzero & \tropzero \\
  \tropzero & \tropzero  &   0 & \tropzero &  \tropzero & \tropzero & -16 & \tropzero \\
  \tropzero &    0 & \tropzero &    0 & \tropzero & \tropzero & \tropzero & \tropzero \\
     0 & \tropzero & \tropzero & \tropzero & \tropzero &   -6 & \tropzero & \tropzero \\
     -11 & \tropzero & \tropzero & \tropzero & \tropzero & \tropzero &  -14 & \tropzero \\
  \tropzero & \tropzero & \tropzero & \tropzero &   -18 & \tropzero & \tropzero & \tropzero \\
  \tropzero & \tropzero & \tropzero & \tropzero & \tropzero & \tropzero & \tropzero &   -20 \\
  \tropzero & \tropzero &    -11 & \tropzero & \tropzero &    -3 & \tropzero & \tropzero 
    \end{pmatrix},
        A_{2} = \begin{pmatrix}
    \tropzero  &   0 & \tropzero &    0 & \tropzero & \tropzero & \tropzero & \tropzero \\
  \tropzero & \tropzero  &   0 & \tropzero &  \tropzero & \tropzero & -3 & \tropzero \\
  \tropzero &    0 & \tropzero &    0 & \tropzero & \tropzero & \tropzero & \tropzero \\
     0 & \tropzero & \tropzero & \tropzero & \tropzero &   -6 & \tropzero & \tropzero \\
     -17 & \tropzero & \tropzero & \tropzero & \tropzero & \tropzero &   -6 & \tropzero \\
  \tropzero & \tropzero & \tropzero & \tropzero &  -17 & \tropzero & \tropzero & \tropzero \\
  \tropzero & \tropzero & \tropzero & \tropzero & \tropzero & \tropzero & \tropzero &   -5 \\
  \tropzero & \tropzero &   -19 & \tropzero & \tropzero &    -7 & \tropzero & \tropzero 
    \end{pmatrix}, \\
        A_{3} & = \begin{pmatrix}
    \tropzero  &   0 & \tropzero &    0 & \tropzero & \tropzero & \tropzero & \tropzero \\
  \tropzero & \tropzero  &   0 & \tropzero &  \tropzero & \tropzero & -4 & \tropzero \\
  \tropzero &    0 & \tropzero &    0 & \tropzero & \tropzero & \tropzero & \tropzero \\
     0 & \tropzero & \tropzero & \tropzero & \tropzero &   -6 & \tropzero & \tropzero \\
     -13 & \tropzero & \tropzero & \tropzero & \tropzero & \tropzero &   -10 & \tropzero \\
  \tropzero & \tropzero & \tropzero & \tropzero &   -8 & \tropzero & \tropzero & \tropzero \\
  \tropzero & \tropzero & \tropzero & \tropzero & \tropzero & \tropzero & \tropzero &   -17 \\
  \tropzero & \tropzero &    -12 & \tropzero & \tropzero &    -11 & \tropzero & \tropzero 
    \end{pmatrix},
        A_{4} = \begin{pmatrix}
    \tropzero  &   0 & \tropzero &    0 & \tropzero & \tropzero & \tropzero & \tropzero \\
  \tropzero & \tropzero  &   0 & \tropzero &  \tropzero & \tropzero & -19 & \tropzero \\
  \tropzero &    0 & \tropzero &    0 & \tropzero & \tropzero & \tropzero & \tropzero \\
     0 & \tropzero & \tropzero & \tropzero & \tropzero &   -6 & \tropzero & \tropzero \\
     -16 & \tropzero & \tropzero & \tropzero & \tropzero & \tropzero &   -16 & \tropzero \\
  \tropzero & \tropzero & \tropzero & \tropzero &   -8 & \tropzero & \tropzero & \tropzero \\
  \tropzero & \tropzero & \tropzero & \tropzero & \tropzero & \tropzero & \tropzero &   -12 \\
  \tropzero & \tropzero &    -2 & \tropzero & \tropzero &    -2 & \tropzero & \tropzero 
    \end{pmatrix}, \\
        A_{5} &  = \begin{pmatrix}
    \tropzero  &   0 & \tropzero &    0 & \tropzero & \tropzero & \tropzero & \tropzero \\
  \tropzero & \tropzero  &   0 & \tropzero &  \tropzero & \tropzero & -11 & \tropzero \\
  \tropzero &    0 & \tropzero &    0 & \tropzero & \tropzero & \tropzero & \tropzero \\
     0 & \tropzero & \tropzero & \tropzero & \tropzero &   -16 & \tropzero & \tropzero \\
     -19 & \tropzero & \tropzero & \tropzero & \tropzero & \tropzero &   -3 & \tropzero \\
  \tropzero & \tropzero & \tropzero & \tropzero &  -12 & \tropzero & \tropzero & \tropzero \\
  \tropzero & \tropzero & \tropzero & \tropzero & \tropzero & \tropzero & \tropzero &   -10 \\
  \tropzero & \tropzero &    -1 & \tropzero & \tropzero &    -7 & \tropzero & \tropzero 
    \end{pmatrix}.
\end{align*} \normalsize

Using these matrices we can calculate $\Asup$ and $\Ainf$, 
\small \begin{align*}
    \Asup & = \begin{pmatrix}
    \tropzero  &   0 & \tropzero &    0 & \tropzero & \tropzero & \tropzero & \tropzero \\
  \tropzero & \tropzero  &   0 & \tropzero &  \tropzero & \tropzero & -3 & \tropzero \\
  \tropzero &    0 & \tropzero &    0 & \tropzero & \tropzero & \tropzero & \tropzero \\
     0 & \tropzero & \tropzero & \tropzero & \tropzero &   -6 & \tropzero & \tropzero \\
     -11 & \tropzero & \tropzero & \tropzero & \tropzero & \tropzero &   -3 & \tropzero \\
  \tropzero & \tropzero & \tropzero & \tropzero &  -8 & \tropzero & \tropzero & \tropzero \\
  \tropzero & \tropzero & \tropzero & \tropzero & \tropzero & \tropzero & \tropzero &   -5 \\
  \tropzero & \tropzero &    -1 & \tropzero & \tropzero &    -2 & \tropzero & \tropzero 
    \end{pmatrix},
    \Ainf = \begin{pmatrix}
    \tropzero  &   0 & \tropzero &    0 & \tropzero & \tropzero & \tropzero & \tropzero \\
  \tropzero & \tropzero  &   0 & \tropzero &  \tropzero & \tropzero & -19 & \tropzero \\
  \tropzero &    0 & \tropzero &    0 & \tropzero & \tropzero & \tropzero & \tropzero \\
     0 & \tropzero & \tropzero & \tropzero & \tropzero &   -16 & \tropzero & \tropzero \\
     -19 & \tropzero & \tropzero & \tropzero & \tropzero & \tropzero &   -16 & \tropzero \\
  \tropzero & \tropzero & \tropzero & \tropzero &  -18 & \tropzero & \tropzero & \tropzero \\
  \tropzero & \tropzero & \tropzero & \tropzero & \tropzero & \tropzero & \tropzero &   -20 \\
  \tropzero & \tropzero &    -19 & \tropzero & \tropzero &    -11 & \tropzero & \tropzero 
    \end{pmatrix}
\end{align*} \normalsize
as well as $\alpha_{i,\critnodes}$, $\beta_{\critnodes,j}$, $\gamma_{i,j}$, $w_{i,\critnodes}$ and $v_{\critnodes,j}$:
\begin{align*}
    \alpha_{i,\critnodes} = \begin{pmatrix} 0 \\
0 \\
0 \\
0 \\
-9 \\
-17 \\
-6 \\
-1 
\end{pmatrix}, \quad \quad \quad  \beta_{\critnodes,j}^{T} = \begin{pmatrix} 0 \\
0 \\
0 \\
0 \\
-14 \\
-6 \\
-3 \\
-8 
\end{pmatrix}, \quad \quad \quad  \gamma_{i,j} = \begin{pmatrix}
    \tropzero & \tropzero & \tropzero & \tropzero & \tropzero & \tropzero & \tropzero & \tropzero \\
    \tropzero & \tropzero & \tropzero & \tropzero & \tropzero & \tropzero & \tropzero & \tropzero \\
    \tropzero & \tropzero & \tropzero & \tropzero & \tropzero & \tropzero & \tropzero & \tropzero \\
    \tropzero & \tropzero & \tropzero & \tropzero & \tropzero & \tropzero & \tropzero & \tropzero \\
    \tropzero & \tropzero & \tropzero & \tropzero &   -18  	& -10 &	-3 &	-8 \\
    \tropzero & \tropzero & \tropzero & \tropzero & -18	& -10 &	-3 &	-8 \\
    \tropzero & \tropzero & \tropzero & \tropzero & -15 &	-7 &	-18 &	-5 \\
    \tropzero & \tropzero & \tropzero & \tropzero & -10 &	-2 &	-13 &	-18 
    \end{pmatrix} \\
    \if{w_{i,\critnodes} = \begin{pmatrix} 0 \\
0 \\
0 \\
0 \\
-19 \\
-37 \\
-39 \\
-19 
\end{pmatrix},  v_{\critnodes,j}^{T} = \begin{pmatrix} 0 \\
0 \\
0 \\
0 \\
-34 \\
-16 \\
-19 \\
-39 
\end{pmatrix} \\}\fi
w_{i,\critnodes}^{T} = \begin{pmatrix} 0 &0 &0 &0 &-19 &-37 &-39 &-19 
\end{pmatrix},  v_{\critnodes,j} = \begin{pmatrix} 0 &0 &0 &0 &-34 &-16 &-19 &-39 
\end{pmatrix}.
\end{align*}
With all the pieces ready we can now form the bound of Corollary~\ref{c:ambientgamma},
\footnotesize\begin{align*}
\begin{split}
    k \geq &
    \max\left(\begin{pmatrix}
    12 &    12 &	12 &	12 &	16.4 &	14.2 &	15.6 &	18.9 \\
    12 & 	12 &	12 &	12 &	16.4 &	14.2 &	15.6 &	18.9 \\
    12 &	12 &	12 &	12 &	16.4 &	14.2 &	15.6 &	18.9 \\
    12 &	12 &	12 &	12 &	16.4 &	14.2 &	15.6 &	18.9 \\
    14.2 &	14.2 &	14.2 &	14.2 &	18.7 &	16.4 &	17.8 &	21.1 \\
    16.4 &	16.4 &	16.4 &	16.4 &	20.9 &	18.7 &	20 &	23.3 \\
    19.3 &	19.3 &	19.3 &	19.3 &	23.8 &	21.6 &	22.9 &	26.2 \\
    16 &	16 &	16   & 	16   &  20.4 &	18.22 &	19.6 &	22.9
    \end{pmatrix},
    \begin{pmatrix}
    \tropzero &	\tropzero &	\tropzero &	\tropzero &	\tropzero &	\tropzero &	\tropzero &	\tropzero \\
    \tropzero &	\tropzero &	\tropzero &	\tropzero &	\tropzero &	\tropzero &	\tropzero &	\tropzero \\
    \tropzero &	\tropzero &	\tropzero &	\tropzero &	\tropzero &	\tropzero &	\tropzero &	\tropzero \\
    \tropzero &	\tropzero &	\tropzero &	\tropzero &	\tropzero &	\tropzero &	\tropzero &	\tropzero \\
    \tropzero &	\tropzero &	\tropzero &	\tropzero &	12.8 &	10.6 &	12.8 &	16.1 \\
    \tropzero &	\tropzero &	\tropzero &	\tropzero &	19 &	12.8 &	15 &	18.3 \\
    \tropzero &	\tropzero &	\tropzero &	\tropzero &	17.9 &	15.7 &	13.9 &	21.2 \\
    \tropzero &	\tropzero &	\tropzero &	\tropzero &	14.6 &	12.3 &	10.6 &	13.9
    \end{pmatrix}\right) \\ \normalsize
    & \Rightarrow k \geq 23.8.
\end{split}
\end{align*}
Therefore by Corollary~\ref{c:ambientgamma} if the length of a product using the matrices from $\mathcal{X}$ is greater than or equal to $24$ then the resulting product will be CSR. We will show such a product. Let $\Gamma(24)$ be the inhomogeneous matrix product made using the word $P=5     5     1     5     4     1     2     3     5     5     1     5     5     3     5     1     3     5     4     5     4     1     5     5$ which gives us:
\begin{align*}
    \Gamma(24)=\begin{pmatrix}
    0 & \tropzero  &   0 & \tropzero & \tropzero &  -16 &  -11 & \tropzero \\
  \tropzero &    0 & \tropzero &     0 &  -28 & \tropzero &  \tropzero &  -21 \\
     0 & \tropzero &    0 & \tropzero & \tropzero &  -16 &  -11 & \tropzero \\
  \tropzero &    0 & \tropzero &    0 &  -28 & \tropzero & \tropzero &  -21 \\
  \tropzero &  -19 & \tropzero &  -19 &  -47 & \tropzero & \tropzero &  -40 \\
   -31 & \tropzero &  -31 & \tropzero & \tropzero &  -47 &  -42 & \tropzero \\
   -11 & \tropzero &  -11 & \tropzero & \tropzero &  -27 &  -22 & \tropzero \\
  \tropzero &   -1 & \tropzero &   -1 &  -29 & \tropzero & \tropzero &  -22
    \end{pmatrix}.
\end{align*}
This matrix product is indeed CSR and by Definition~\ref{d:CSR1} we have,
\begin{align*}
    \Gamma(24) & = \begin{pmatrix}
     0 & \tropzero &    0 & \tropzero \\
  \tropzero &    0 & \tropzero &    0 \\
     0 & \tropzero &    0 & \tropzero \\
  \tropzero &    0 & \tropzero &    0 \\
  \tropzero &  -19 & \tropzero &  -19 \\
   -31 & \tropzero &  -31 & \tropzero \\
   -11 & \tropzero &  -11 & \tropzero \\
  \tropzero &   -1 & \tropzero &   -1 
    \end{pmatrix} \otimes
    \begin{pmatrix}
    0 & \tropzero & \tropzero & \tropzero \\
    \tropzero & 0 & \tropzero & \tropzero \\
    \tropzero & \tropzero & 0 & \tropzero \\
    \tropzero & \tropzero & \tropzero & 0 
    \end{pmatrix} \otimes
    \begin{pmatrix}
     0 & \tropzero &    0 & \tropzero & \tropzero &  -16  & -11 & \tropzero \\
  \tropzero &    0 & \tropzero &    0 &  -28 & \tropzero  & \tropzero &  -21 \\
     0 & \tropzero &    0 & \tropzero & \tropzero &  -16  & -11 & \tropzero \\
  \tropzero &    0 & \tropzero &    0 &  -28 &  \tropzero & \tropzero &  -21
    \end{pmatrix} \\
    \Gamma(24) & = \begin{pmatrix}
     0 & \tropzero \\
  \tropzero &    0  \\
     0 & \tropzero  \\
  \tropzero &    0 \\
  \tropzero &  -19  \\
   -31 & \tropzero  \\
   -11 & \tropzero  \\
  \tropzero &   -1  
    \end{pmatrix} \otimes
    \begin{pmatrix}
    0 & \tropzero \\
    \tropzero & 0
    \end{pmatrix} \otimes
    \begin{pmatrix}
     0 & \tropzero &    0 & \tropzero & \tropzero &  -16  & -11 & \tropzero \\
  \tropzero &    0 & \tropzero &    0 &  -28 & \tropzero  & \tropzero &  -21 
    \end{pmatrix}.
\end{align*}
We can see that, for the $C$ matrix, columns $3$ and $4$ are copies of columns $1$ and $2$ respectively. The same is also true for the rows of the $R$ matrix so they can be deleted. As $24\modd{2}=0$ we replace the $S$ matrix with the tropical identity matrix which shows us that the matrix product $\Gamma(24)$ using the word $P$ is indeed CSR and it has factor rank-$2$.

\section{Counterexamples} \label{cpt:nobound}
Here we present a number of counterexamples for the different cases of digraph structure. These counterexamples present families of products which are not CSR, and we construct them in such a way that they have no upper bound on their length.

\subsection{The ambient graph is primitive but the 
critical graph is not} 

We will now look at two cases where we are unable to create a bound for matrix products to become CSR. For the first case we will be looking at digraphs that are primitive but have a critical subgraph with a non-trivial cylicity. Therefore we have the following assumption:

\begin{asssumption}{P}{1} \label{a:ambientprimative}
$\digr(\mathcal{X})$ is primitive (i.e., $\gamma(\digr(\mathcal{X}))=1$) and the critical subgraph $\crit(\mathcal{X})$, which is a single strongly connected component, has cyclicity $\gamma(\crit(\mathcal{X}))=\gamma$.
\end{asssumption}

Using these assumptions we can now present a counterexample which shows that no bound for $k$ in terms of $\Asup$ and $\Ainf$ can exist that ensures that $\Gamma(k)$ is CSR.

Let $\digr(G)$ be the five node digraph with the following structure:
\begin{center}
 \begin{tikzpicture}[thick]
\coordinate (5) at (2,3);
\coordinate (6) at (4,1);
\coordinate (4) at (2,-3);
\coordinate (2) at (0,1);
\coordinate (1) at (0,-1);
\coordinate (3) at (4,-1);

\draw [black, ->,shorten >= 0.15cm]   (2) to (5);
\draw [black, ->,shorten >= 0.15cm]   (5) to (6);
\draw [black, ->,shorten >= 0.15cm]   (6) to (2);
\draw [black, ->,shorten >= 0.15cm]   (1) to (3);
\draw [black, ->,shorten >= 0.15cm]   (3) to (4);
\draw [black, ->,shorten >= 0.15cm]   (4) to (1);
\draw [red, ->,shorten >= 0.15cm]   (2) to[out=-45,in=45] (1);
\draw [red, ->,shorten >= 0.15cm]   (1) to[out=135,in=-135] (2);

\draw[fill=black] (1) circle(0.1);
\node[label={$(1)$}] at (1) {a};
\draw[fill=black] (2) circle(0.1);
\node[label={$(2)$}] at (2) {a};
\draw[fill=black] (3) circle(0.1);
\node[label={$(3)$}] at (3) {a};
\draw[fill=black] (4) circle(0.1);
\node[label={$(4)$}] at (4) {a};
\draw[fill=black] (5) circle(0.1);
\node[label={$(5)$}] at (5) {a};
\draw[fill=black] (6) circle(0.1);
\node[label={$(6)$}] at (6) {a};
\end{tikzpicture}    
\end{center}

This digraph will have the following associated weight matrix.
\begin{align*}
A= \begin{pmatrix}
\tropzero &   0         &    A_{1,3}   & \tropzero & \tropzero & \tropzero \\
   0        & \tropzero & \tropzero & \tropzero &    A_{2,5}   & \tropzero \\
\tropzero & \tropzero & \tropzero &    A_{3,4}   & \tropzero &    A_{3,6}   \\
   A_{4,1}   & \tropzero & \tropzero & \tropzero & \tropzero & \tropzero \\
\tropzero & \tropzero & \tropzero & \tropzero & \tropzero &    A_{5,6}   \\
\tropzero &  A_{6,2}     &    A_{6,3}   & \tropzero & \tropzero & \tropzero 
\end{pmatrix}
\end{align*}
There is a critical subgraph consisting of the cycle between nodes $1$ and $2$. There also exist two cycles, $1 \to 3 \to 4 \to 1$ and $2 \to 5 \to 6 \to 2$, both of length $3$ which makes $\digr(A)$  primitive. We aim to present a family of words with infinite length such that the products made up using these words are not CSR. Since the cyclicity of the critical subgraph is $2$ then we will have to create two classes of words, one of even length and one of odd length to define the family.

The semigroup of matrices we will use is generated by the two matrices:
\small\begin{align*}
A_{1}= \begin{pmatrix}
        \tropzero &          0     &  -100 &       \tropzero     &   \tropzero &       \tropzero \\
           0  &      \tropzero     &   \tropzero   &     \tropzero     &  -100   &     \tropzero \\
        \tropzero   &     \tropzero    &    \tropzero    &   -100    &    \tropzero    &   \tropzero \\
       -100    &    \tropzero   &     \tropzero     &   \tropzero   &     \tropzero     &   \tropzero \\
        \tropzero     &   \tropzero  &      \tropzero      &  \tropzero  &      \tropzero      & -100 \\
        \tropzero      & -100 &      \tropzero       & \tropzero &       \tropzero       & \tropzero \\
\end{pmatrix},
A_{2} = \begin{pmatrix}
        \tropzero &          0      & -100 &       \tropzero      &  \tropzero &       \tropzero \\
           0  &      \tropzero     &   \tropzero  &      \tropzero     &  -1  &      \tropzero \\
        \tropzero   &     \tropzero    &    \tropzero   &    -100    &    \tropzero   &    \tropzero \\
       -1    &    \tropzero   &     \tropzero    &    \tropzero   &     \tropzero    &    \tropzero \\
        \tropzero     &   \tropzero  &      \tropzero     &   \tropzero  &      \tropzero     &  -100 \\
        \tropzero      & -100 &      \tropzero      &  \tropzero &       \tropzero      &  \tropzero \\
\end{pmatrix}
\end{align*} \normalsize

Let us first consider the class of words 
$(1)^{2t}2$ where $t\geq 2$, and let $U=(A_1)^{2t}A_2$ for 
arbitrary such $t$. We will first examine entries $U_{6,1}$, $U_{2,5}$, $U_{6,2}$ and $U_{1,5}$.

The entry $U_{6,1}$ can be obtained as the weight of the walk $6\underbrace{(21)(21)\ldots (21)}_{t-1}341$, which is $-301$. For this observe that the walk $621$ has an even length and therefore we need to use one of the three-cycles to make it odd, and using the southern three-cycle in the end of the walk is the most profitable way to do so.  The entry $U_{25}$ is equal to $-1$, as there is a walk that mostly rests on the critical cycle and only in the end jumps to node $5$. We also have $U_{6,2}=-100$ (go to node $2$ and remain on the critical cycle) and $U_{1,5}=-301$ (use the southern triangle once, then dwell on the critical cycle and in the end jump to node $5$). Note that in the case of $U_{1,5}$ we again need to use one of the triangles to create a walk of an odd length.

We then compute
$$(CSR)[U]_{6,5}= (US^3U)_{6,5}=\max(U_{6,1}+U_{2,5},\; U_{6,2}+U_{1,5})
= -301-1=-302.$$

However, $U_{6,5}$ results from the walk
$6\underbrace{(21)(21)\ldots (21)}_{t-1}2562$, 
with weight $-401$, needing to use the northern triangle to make a walk of odd length. 

The following an example of $U$ and $CS^{2t+1}R[U]$ for $t=10$:

\begin{align*}
U=& \begin{pmatrix}
-201 &  0 & -100 & -500 & -301 & -200 \\
     0 &  -300 & -400 & -200  &  -1 & -500 \\
  -401 & -200 & -300 & -700 & -501 & -400 \\
  -100 & -400 & -500 & -300 & -101 & -600 \\
  -200 & -500 & -600 & -400 & -201 & -700 \\
  -301 & -100 & -200 & -600 & -401 & -300 
\end{pmatrix} \\
CS^{21\modd{2}}R[U]=& \begin{pmatrix}
-201   &  0 & -100 & -401 & -202 & -200 \\
     0 & -300 & -400 & -200 &   -1 & -500 \\
  -401 & -200 & -300 & -601 & -402 & -400 \\
  -100 & -400 & -500 & -300 & -101 & -600 \\
  -200 & -500 & -600 & -400 & -201 & -700 \\
  -301 & -100 & -200 & -501 & -302 & -300
\end{pmatrix}    
\end{align*}

We now consider the class of words $(1)^{2t+1}2$ where $t\geq 1$, and let 
$V=(A_1)^{2t+1}A_2$ for arbitrary such $t$.
We will first examine entries $V_{2,1}$, $V_{1,5}$, $V_{2,2}$ and $V_{2,5}$. 

The entry $V_{2,1}=-201$ is obtained as the weight of the walk $2\underbrace{(12)(12)\ldots (12)}_{t-1}341$: it is necessary to use one of the triangles to create a walk of even length, and using the southern triangle once in the end of the walk is the most profitable way to do so. The walk $125$ already has an even length, and we only have to augment it with enough copies of the critical cycle and use the arc $2\to 5$ in the end of the walk, thus getting $V_{1,5}=-1$. Obviously, $V_{2,2}=0:$ we just stay on the critical cycle. The entry $V_{2,5}=-301$ is obtained as the weight of the walk $\underbrace{(21)(21)\ldots (21)}_{t-1}5625$, where we have to use the northern triangle in the end of the walk to create a walk of even walk and minimise the loss.  

We then find 
$$
(CS^2R[V])_{2,5}=(VS^2V)_{2,5}=\max(V_{2,1}+V_{1,5}, V_{2,2}+V_{2,5})=V_{2,1}+V_{1,5}=-202, 
$$
which is bigger than $V_{2,5}=-301$.

The case for $V_{2,5}$ is one for connecting a critical node to a non critical node. For completeness we should also look at a walk connecting two non critical nodes, namely the walk representing $V_{4,5}$. To do this we will need to also look at the entries $V_{4,1}$ and $V_{4,2}$. For $V_{4,1}=-301$ the entry is obtained as the weight of the walk $4\underbrace{(12)(12)\ldots(12)}_{t-1}341$. As the walk $41$ has odd length, one of the triangles is required to make the walk even so choosing the southern triangle is the most profitable way to achieve an even length walk. The walk $412$ already has an even length so we can augment it with enough copies of the critical cycle to give us the desired length for the walk representing the entry $V_{4,2}=-100$. Using $V_{1,5}$ and $V_{2,5}$ discussed earlier we calculate
$$
(CS^2R[V])_{4,5}=(VS^2V)_{4,5}=\max(V_{4,1}+V_{1,5}, V_{4,2}+V_{2,5})=V_{4,1}+V_{1,5}=-302, 
$$
which is bigger than $V_{4,5}=-401$.

We now show an example of $V$ for $t=10$:

\begin{align*}
V=& \begin{pmatrix}
     0 & -300 & -400 & -200  &  -1 & -500 \\
  -201 &    0 & -100 & -500 & -301  & -200 \\
  -200 & -500 & -600 & -400 & -201  & -700 \\
  -301 & -100 & -200 & -600  &-401 & -300 \\
  -401 & -200 & -300 & -700 & -501 & -400 \\
  -100 & -400 & -500 & -300 & -101 & -600 
\end{pmatrix} \\
CS^{22\modd{2}}R[V]= & \begin{pmatrix}
     0 & -300 & -400 & -200 &   -1 & -500 \\
  -201 &    0 & -100 & -401 & -202 & -200 \\
  -200 & -500 & -600 & -400 & -201 & -700 \\
  -301 & -100 & -200 & -501 & -302 & -300 \\
  -401 & -200 & -300 & -601 & -402 & -400 \\
  -100 & -400 & -500 & -300 & -101 & -600
\end{pmatrix}    
\end{align*}

Combining both classes we have a family of words covering all lengths greater than $29$ such that any product made using these words will not be CSR. Therefore there cannot be a transient for this case as there is no upper limit to the lengths of these words. 

There also exists another counterexample in the primitive case which shows that even walks connecting two nodes from the same critical subgraph can not be CSR.

Let $\digr(G)$ be the three node digraph with the following structure:
\begin{center}
 \begin{tikzpicture}[thick]
\coordinate (1) at (0,2);
\coordinate (2) at (-2,-2);
\coordinate (2a) at (-3,-3);
\coordinate (3) at (2,-2);
\coordinate (3a) at (3,-3);

\draw [red, ->,shorten >= 0.15cm]   (1) to (2);
\draw [red, ->,shorten >= 0.15cm]   (2) to (3);
\draw [red, ->,shorten >= 0.15cm]   (3) to (1);

\draw [black, ->]   (2) to[out=180,in=135] (2a);
\draw [black, ->]   (3) to[out=-90,in=-135] (3a);
\draw [black]   (2a) to[out=-45,in=-90] (2);
\draw [black]   (3a) to[out=45,in=0] (3);
\draw [black, ->,shorten >= 0.15cm]   (3) to[out=-135,in=-45] (2);

\draw[fill=black] (1) circle(0.1);
\node[label={$(1)$}] at (1) {a};
\draw[fill=black] (2) circle(0.1);
\node[label=above left:{$(2)$}] at (2) {a};
\draw[fill=black] (3) circle(0.1);
\node[label=above right:{$(3)$}] at (3) {a};

\end{tikzpicture}    
\end{center}
The digraph has the following associated weight matrix.
\begin{equation*}
    A = \begin{pmatrix}
    \tropzero & 0 & \tropzero \\
    \tropzero & A_{2,2} & 0 \\
    0 & A_{3,2} & A_{3,3} 
    \end{pmatrix}.
\end{equation*}
For this example there is a single critical cycle of length $3$ traversing all of the nodes. There also exists two loops $2\to2$ and $3\to3$ and a cycle $2\to3\to2$ of length $2$. Like the previous example this digraph is primitive but the critical subgraph has cyclicity $3$. As the cyclicity is greater than one we need to present three different classes of words making up a family of words such that any product $\Gamma(k)$ made using these words will not be CSR. 

The semigroup of matrices that we will use is again generated only by two matrices:
\begin{equation*}
    A_{1}=\begin{pmatrix}
    \tropzero & 0 & \tropzero \\
    \tropzero & -100 & 0 \\
    0 & -100 & -100 
    \end{pmatrix} \quad A_{2} = \begin{pmatrix}
    \tropzero & 0 & \tropzero \\
    \tropzero & -1 & 0 \\
    0 & -100 & -1 
    \end{pmatrix}
\end{equation*}

Let the first class of words be $(1)^{3t+2}2$ for $t\geq 0$, and let $M=(A_{1})^{3t+2}A_{2}$ for any arbitrary $t$. We will now examine the entries $M_{1,1}$, $M_{1,2}$, $M_{2,2}$ $M_{1,3}$ and $M_{3,2}$.

Since all the walks are of length $0$ modulo $3$ then any walk connecting $i$ to $i$ will have weight zero as we can simply use the critical cycle. This gives $M_{1,1}=M_{2,2}=0$. The entry $M_{1,2}$ can be obtained as the weight of the walk $(123)^{t+1}2$ which is $-100$. In this entry observe that the walk $12$ is of length $1$ modulo $3$ therefore we need to use the two cycle $2\to3\to2$ to give us a walk of the desired length. The entry $M_{1,3}$ is equal to the weight of the walk $(123)^{t+1}3$ and the entry $M_{3,2}$ is equal to the weight of the walk $(312)^{t+1}2$. For these entries observe that the walks $123$ and $312$ are both of length $2$ modulo $3$ therefore we require a loop for both walks to give us the required length. The most profitable time to use these loops are right at the end of the walk.

We then compute
$$(CSR)[M]_{1,2}= (MS^3M)_{1,2}=\max(M_{1,1} + M_{1,2}, M_{1,2} + M_{2,2}, M_{1,3} + M_{3,2})
= -1-1=-2.$$

However, as seen earlier the entry $M_{12}$ has weight $-100$ which is less than the CSR suggestion.

The following is an example of $M$ and $CS^{3t+3}R[M]$ for $t=10$:
\begin{equation*}
    M = \begin{pmatrix}
       0 & -100 &   -1 \\
  -100   &  0 & -100 \\
  -100   & -1   &  0 
    \end{pmatrix} \quad  CS^{33\modd{3}}R[M] = \begin{pmatrix}
         0  &  -2   & -1 \\
  -100  &   0 & -100 \\
  -100 &   -1  &   0
    \end{pmatrix}
\end{equation*}

For efficiency we will simply present the final two classes and omit the in-depth analysis of them:
\begin{itemize}
    \item[] For walks of length $1$ modulo $3$ we have the class of words $(1)^{3t+3}2$ for $t\geq 0$.
    \item[] For walks of length $2$ modulo $3$ we have the class of words $(1)^{3t+4}2$ for $t\geq 0$.
\end{itemize}
We will also present examples of products and their CSR counterparts made using these words for $t=10$ where $N=(A_{1})^{3t+3}A_{2}$ and $P=(A_{1})^{3t+4}A_{2}$.
\begin{align*}
    N = &  \begin{pmatrix}
  -100  &   0 & -100 \\
  -100  &  -1   &  0 \\
     0  & -100  &  -1
    \end{pmatrix}
    \quad
    CS^{34\modd{3}}R[N] = \begin{pmatrix}
  -100  &   0 & -100 \\
  -100  &  -1 &    0 \\
     0  &  -2 &   -1
    \end{pmatrix} \\
    P = &  \begin{pmatrix}
  -100  &  -1 &    0 \\
     0 & -100 &   -1 \\
  -100 &    0 & -100
    \end{pmatrix}
    \quad
    CS^{35\modd{3}}R[P] = \begin{pmatrix}
  -100  &  -1  &   0 \\
     0  &  -2  &  -1 \\
  -100  &   0 & -100
    \end{pmatrix}.
\end{align*}
The combination of these three classes create a family of words such that any product $\Gamma(k)$ made using these words is not CSR and as all the nodes are critical then there exist walks connecting them that are not CSR.

We now extend these counterexamples to a more general form where we consider digraphs with non-trivial cyclicity $r$ along with critical subgraphs with cyclicity $\gamma$ which is greater than $r$. This leads to the following assumptions. 

\subsection{More general case}


\begin{asssumption}{P}{2} \label{a:ambientless}
$\digr(\mathcal{X})$ has cyclicity $r$ and the critical subgraph $\crit(\mathcal{X})$, which strongly connected, has cyclicity $\gamma > r$.
\end{asssumption}

In a similar method to the primitive example above, using the new assumptions, we can now describe a counterexample that shows that no bound for $k$ in terms of $\Asup$ and $\Ainf$ can exist that ensures $\Gamma(k)$ is CSR.

Let $\digr(\mathcal{X})$ be a six node digraph with the following structure: 
\begin{center}
 \begin{tikzpicture}[thick]
\coordinate (1) at (-2,2);
\coordinate (2) at (-2,-2);
\coordinate (3) at (2,-2);
\coordinate (4) at (2,2);
\coordinate (5) at (6,-2);
\coordinate (6) at (6,2);

\draw [red, ->,shorten >= 0.15cm]   (1) to (2);
\draw [red, ->,shorten >= 0.15cm]   (2) to (3);
\draw [red, ->,shorten >= 0.15cm]   (3) to (4);
\draw [red, ->,shorten >= 0.15cm]   (4) to (1);

\draw [black, ->,shorten >= 0.15cm]   (3) to (5);
\draw [black, ->,shorten >= 0.15cm]   (5) to (6);
\draw [black, ->,shorten >= 0.15cm]   (6) to (4);

\draw[fill=black] (1) circle(0.1);
\node[label={$(1)$}] at (1) {a};
\draw[fill=black] (2) circle(0.1);
\node[label=below:{$(2)$}] at (2) {a};
\draw[fill=black] (3) circle(0.1);
\node[label=below:{$(3)$}] at (3) {a};
\draw[fill=black] (4) circle(0.1);
\node[label={$(4)$}] at (4) {a};
\draw[fill=black] (5) circle(0.1);
\node[label=below:{$(5)$}] at (5) {a};
\draw[fill=black] (6) circle(0.1);
\node[label={$(6)$}] at (6) {a};

\end{tikzpicture}    
\end{center}
along with the following associated weight matrix,
\begin{align*}
A= \begin{pmatrix}
\tropzero &      0      & \tropzero & \tropzero & \tropzero & \tropzero  \\
\tropzero & \tropzero &      0      & \tropzero & \tropzero & \tropzero  \\
\tropzero & \tropzero & \tropzero &      0      &   A_{3,5}    & \tropzero  \\
     0      & \tropzero & \tropzero & \tropzero & \tropzero & \tropzero  \\
\tropzero & \tropzero & \tropzero & \tropzero & \tropzero &   A_{5,6}     \\
\tropzero & \tropzero & \tropzero &   A_{6,4}    & \tropzero & \tropzero  
\end{pmatrix}
\end{align*}
Here the critical cycle traverses nodes $1 \to 2 \to 3 \to 4 \to 1$ however there also exists another non-critical cycle of length six traversing $1 \to 2 \to 3 \to 5 \to 6 \to 4 \to 1$. This means that while the cyclicity of the critical subgraph is $4$ the cyclicity of $\digr(G)$ is $2$. Therefore the digraph structure satisfies the assumptions and we can develop a family of words with infinite length such that any $\Gamma(k)$ made using these words will not be CSR. As the cyclicity of the critical subgraph is $4$ then we will require four classes of words to fully define the family. 

The semigroup of matrices that will be used is generated by two matrices:
\begin{equation*}
A_1= \begin{pmatrix}
\tropzero &      0      & \tropzero & \tropzero & \tropzero & \tropzero  \\
\tropzero & \tropzero &      0      & \tropzero & \tropzero & \tropzero  \\
\tropzero & \tropzero & \tropzero &      0      &    -100     & \tropzero  \\
     0      & \tropzero & \tropzero & \tropzero & \tropzero & \tropzero  \\
\tropzero & \tropzero & \tropzero & \tropzero & \tropzero &    -100      \\
\tropzero & \tropzero & \tropzero &    -100     & \tropzero & \tropzero  
\end{pmatrix}
\quad
A_2= \begin{pmatrix}
\tropzero &      0      & \tropzero & \tropzero & \tropzero & \tropzero  \\
\tropzero & \tropzero &      0      & \tropzero & \tropzero & \tropzero  \\
\tropzero & \tropzero & \tropzero &      0      &     -1      & \tropzero  \\
     0      & \tropzero & \tropzero & \tropzero & \tropzero & \tropzero  \\
\tropzero & \tropzero & \tropzero & \tropzero & \tropzero &    -100      \\
\tropzero & \tropzero & \tropzero &     -1      & \tropzero & \tropzero  
\end{pmatrix}
\end{equation*}
Let us begin with the first class of words $(1)^{4t}2$ where $t\geq2$, and let $L=(A_{1})^{4t}A_{2}$ for arbitrary such $t$. We will begin by examining the entries $L_{1,2}$, $L_{1,5}$, $L_{1,4}$ and $L_{3,5}$.

The entry $L_{1,2}$ can be obtained as the weight of the walk $\underbrace{(1234)}_{t}12$, which is $0$. As the walk $12$ has length congruent to $1\modd{4}$ then a walk exists on the critical cycle connecting these nodes. The entry $L_{1,5}$ is obtained from the weight of the walk $\underbrace{(1234)}_{t-2}1235641235$, which is $-301$. As the walk $1235$ has length congruent to $3\modd{4}$ then we need to add on the six cycle with weight $-300$ to give us a walk of length congruent to $1\modd{4}$ and finally the last step of the walk is to go from $3$ to $5$ with weight $-1$. For the entry $L_{1,4}=-201$ which is the weight of the walk $\underbrace{(1234)}_{t-1}123564$ and the entry $L_{35}=-1$ comes from the weight of the walk $\underbrace{(3412)}_{t}35$. Note that in the case of $L_{1,4}$ we used the six cycle to give us the desired length of walk. 

We then compute
\begin{equation*}
    (CSR)[L]_{1,5} = (L\otimes S^3\otimes L)_{1,5} = \max(L_{1,2}+L_{1,5},\; L_{1,4} + L_{3,5}) = -201 -1 = -202.
\end{equation*}

However $L_{15}$, as explained earlier, results from a walk with weight $-301$.

The following is an example of $L$ and $CS^{4t+1}R[L]$ for $t=10$
\begin{align*}
L=& \begin{pmatrix}
  \tropzero &   0  & \tropzero & -201 & -301 & \tropzero \\
  -300 & \tropzero &    0 & \tropzero & \tropzero & -401 \\
  \tropzero & -300 & \tropzero &    0 &   -1 & \tropzero \\
     0 & \tropzero & -300 & \tropzero & \tropzero & -101 \\
  -500 & \tropzero & -200 & \tropzero & \tropzero & -601 \\
  \tropzero & -400 & \tropzero & -100 & -101 & \tropzero 
\end{pmatrix} \\
CS^{41\modd{4}}R[L]=& \begin{pmatrix}
  \tropzero &   0  & \tropzero & -201 & -202 & \tropzero \\
  -300 & \tropzero &    0 & \tropzero & \tropzero & -401 \\
  \tropzero & -300 & \tropzero &    0 &   -1 & \tropzero \\
     0 & \tropzero & -300 & \tropzero & \tropzero & -101 \\
  -500 & \tropzero & -200 & \tropzero & \tropzero & -601 \\
  \tropzero & -400 & \tropzero & -100 & -101 & \tropzero 
\end{pmatrix}
\end{align*}

The other classes behave in a similar way so we omit the in depth explanation of them. We present the words used for each class:
\begin{itemize}
    \item[] For walks of length congruent to $2\modd{4}$ we have the words $(1)^{4t+1}2$ for $t\geq 2$;
    \item[] For walks of length congruent to $3\modd{4}$ we have the words $(1)^{4t+2}2$ for $t\geq 2$;
    \item[] For walks of length congruent to $0\modd{4}$ we have the words $(1)^{4t+3}2$ for $t\geq 2$.
\end{itemize}

For example, if $t=10$ then for the first of these classes 
\begin{align*}
  & F=  (A_{1})^{41}\otimes A_{2}=  \begin{pmatrix}
  -300 & \tropzero &    0 & \tropzero & \tropzero & -401 \\
  \tropzero & -300 & \tropzero &    0 &   -1 & \tropzero \\
     0 & \tropzero & -300 & \tropzero & \tropzero & -101 \\
  \tropzero &    0 & \tropzero & -201 & -301 & \tropzero \\
  \tropzero & -500 & \tropzero & -200 & -201 & \tropzero \\
  -100 & \tropzero & -400 & \tropzero & \tropzero & -201 
\end{pmatrix},\\ 
& CS^{42\modd{4}}R[F]=  \begin{pmatrix}
  -300 & \tropzero &    0 & \tropzero & \tropzero & -401 \\
  \tropzero & -300 & \tropzero &    0 &   -1 & \tropzero \\
     0 & \tropzero & -300 & \tropzero & \tropzero & -101 \\
  \tropzero &    0 & \tropzero & -201 & -202 & \tropzero \\
  \tropzero & -500 & \tropzero & -200 & -201 & \tropzero \\
  -100 & \tropzero & -400 & \tropzero & \tropzero & -201  
\end{pmatrix} 
\end{align*}

\if{
We also present examples of each matrix product for $t=10$ and their CSR counterparts where $F=(A_{1})^{4t+1}A_{2}$, $G=(A_{1})^{4t+2}A_{2}$ and $H=(A_{1})^{4t+3}A_{2}$.
\begin{align*}
    F= & \begin{pmatrix}
  -300 & \tropzero &    0 & \tropzero & \tropzero & -401 \\
  \tropzero & -300 & \tropzero &    0 &   -1 & \tropzero \\
     0 & \tropzero & -300 & \tropzero & \tropzero & -101 \\
  \tropzero &    0 & \tropzero & -201 & -301 & \tropzero \\
  \tropzero & -500 & \tropzero & -200 & -201 & \tropzero \\
  -100 & \tropzero & -400 & \tropzero & \tropzero & -201 
\end{pmatrix},\\ 
CS^{42\modd{4}}R[F]= & \begin{pmatrix}
  -300 & \tropzero &    0 & \tropzero & \tropzero & -401 \\
  \tropzero & -300 & \tropzero &    0 &   -1 & \tropzero \\
     0 & \tropzero & -300 & \tropzero & \tropzero & -101 \\
  \tropzero &    0 & \tropzero & -201 & -202 & \tropzero \\
  \tropzero & -500 & \tropzero & -200 & -201 & \tropzero \\
  -100 & \tropzero & -400 & \tropzero & \tropzero & -201  
\end{pmatrix} \\
G= &  \begin{pmatrix}
  \tropzero & -300 & \tropzero  &   0 &   -1 & \tropzero \\
     0 & \tropzero & -300 & \tropzero & \tropzero & -101 \\
  \tropzero &    0 & \tropzero & -201 & -301 & \tropzero \\
  -300 & \tropzero &    0 & \tropzero & \tropzero & -401 \\
  -200 & \tropzero & -500 & \tropzero & \tropzero & -301 \\
  \tropzero & -100 & \tropzero & -301 & -401 & \tropzero 
\end{pmatrix},\\
CS^{43\modd{4}}R[G]= &  \begin{pmatrix}
  \tropzero & -300 & \tropzero  &   0 &   -1 & \tropzero \\
     0 & \tropzero & -300 & \tropzero & \tropzero & -101 \\
  \tropzero &    0 & \tropzero & -201 & -202 & \tropzero \\
  -300 & \tropzero &    0 & \tropzero & \tropzero & -401 \\
  -200 & \tropzero & -500 & \tropzero & \tropzero & -301 \\
  \tropzero & -100 & \tropzero & -301 & -302 & \tropzero
\end{pmatrix} \\
    H=&  \begin{pmatrix}
     0 & \tropzero & -300 & \tropzero & \tropzero & -101 \\
  \tropzero &    0 & \tropzero & -201 & -301 & \tropzero \\
  -300 & \tropzero &    0 & \tropzero & \tropzero & -401 \\
  \tropzero & -300 & \tropzero &    0 &   -1 & \tropzero \\
  \tropzero & -200 & \tropzero & -401 & -501 & \tropzero \\
  -400 & \tropzero & -100 & \tropzero & \tropzero & -501 
\end{pmatrix},\\
CS^{44\modd{4}}R[H]= & \begin{pmatrix}
     0 & \tropzero & -300 & \tropzero & \tropzero & -101 \\
  \tropzero &    0 & \tropzero & -201 & -202 & \tropzero \\
  -300 & \tropzero &    0 & \tropzero & \tropzero & -401 \\
  \tropzero & -300 & \tropzero &    0 &   -1 & \tropzero \\
  \tropzero & -200 & \tropzero & -401 & -402 & \tropzero \\
  -400 & \tropzero & -100 & \tropzero & \tropzero & -501
\end{pmatrix}
\end{align*}
}\fi

Combining all classes gives us a family of words covering all lengths greater than $9$ such that any product made using these words will not be CSR. Therefore no transient can exist as there is no upper limit to the lengths of these words.

\subsection{Critical graph is not connected} 
\label{cpt:multiplescc}

For this counterexample we now consider a digraph with multiple critical components $\crit_{1},\ldots,\crit_{m}$ which are each strongly connected components with respective cyclicities $\gamma_{1},\ldots,\gamma_{m}$.
\if{
We denote the set of critical nodes for each critical subgraph $\crit_{i}$ as $\critnodes(i)$ with respective set sizes $q_{1},\ldots,q_{m}$. We will require the following definition and notation before we continue.

\begin{definition} \label{d:middle}
Consider a trellis digraph $\trellis(\Gamma(k))$. By a \emph{middle walk} connecting $\critnodes(i)$ to $\critnodes(j)$ on $\trellis(\Gamma(k))$ we mean a walk on $\trellis(\Gamma(k))$ connecting node $x:s$ to $y:t$ where, $x \in \critnodes(i)$, $y\in \critnodes(j)$,
$s$ is the first and last time the walk leaves $C_{i}$, $t$ is he first and last time the walk enters $C_{j}$, where $s$ and $t$ are such that $0<s\leq t<k$. 
The walk does not traverse any other critical subgraph $C_{l}$ otherwise it would become a middle walk connecting $x \in \critnodes(i)$ to $z \in\critnodes(l)$ of length $t_{1}<t$.
\end{definition}

\begin{notation} The following notation is required for the digraphs with multiple critical components:
\begin{itemize} 
    \item [] $\sigma_{\critnodes(i),\critnodes(j)}$ : the weight of the optimal path on $\digr(\Asup)$ connecting a node in $\critnodes(i)$ to a node in $\critnodes(j)$;
    \item [] $u^{\ast}_{\critnodes(i),\critnodes(j)}$ : the maximal weight of walks on $\trellis(\Gamma(k))$ connecting a node in $\critnodes(i)$ to a node in $\critnodes(j)$.
\end{itemize}
\end{notation}
}\fi

\begin{asssumption}{P}{3} \label{a:multiplescc}
$\crit(\mathcal{X})$ is composed of multiple strongly connected components $\crit_{1},\ldots,\crit_{m}$ where the component $\crit_{i}$ has cyclicity $\gamma_{i}$. The cyclicity of $\digr(\mathcal{X})$ is $\lcm_{i}(\gamma_{i})$, which is the same as the cyclicity of $\crit(\mathcal{X})$.
\end{asssumption}

Let us now show a counterexample, which demonstrates that, for the case of several critical components, we cannot have any bounds after which 
the product becomes CSR in terms of $\Asup$ and 
$\Ainf$. The reason is that the non-critical parts of optimal walks whose weights are the entries of $C$ and $R$ cannot be separated in time: in general, they will use the same letters, and such walks on the symmetric extension of $\trellis(\Gamma(k))$ cannot be transformed back to the walks on $\trellis(\Gamma(k))$.

Let $\digr(\mathcal{X})$ be the four node digraph with the following structure:

\begin{center}
 \begin{tikzpicture}[thick]
\coordinate (1) at (-2,2);
\coordinate (2) at (-2,-2);
\coordinate (3) at (2,-2);
\coordinate (4) at (2,2);
\coordinate (1a) at (-3,3);
\coordinate (2a) at (-3,-3);
\coordinate (3a) at (3,-3);

\draw [black, ->,shorten >= 0.15cm]   (1) to (2);
\draw [black, ->,shorten >= 0.15cm]   (2) to (3);
\draw [black, ->,shorten >= 0.15cm]   (3) to (4);
\draw [black, ->,shorten >= 0.15cm]   (4) to (1);

\draw [red, ->]   (1) to[out=90,in=45] (1a);
\draw [red, ->]   (2) to[out=180,in=135] (2a);
\draw [red, ->]   (3) to[out=-90,in=-135] (3a);
\draw [red]   (1a) to[out=-135,in=180] (1);
\draw [red]   (2a) to[out=-45,in=-90] (2);
\draw [red]   (3a) to[out=45,in=0] (3);

\draw[fill=black] (1) circle(0.1);
\node[label=below left:{$(1)$}] at (1) {a};
\draw[fill=black] (2) circle(0.1);
\node[label=above left:{$(2)$}] at (2) {a};
\draw[fill=black] (3) circle(0.1);
\node[label=above right:{$(3)$}] at (3) {a};
\draw[fill=black] (4) circle(0.1);
\node[label=right:{$(4)$}] at (4) {a};
\end{tikzpicture}    
\end{center}
along with the following associated weight matrix
\begin{equation*}
    A = \begin{pmatrix}
     0 & A_{12}  &   \tropzero  & \tropzero     \\
  \tropzero  &   0   & A_{23}  &  \tropzero  \\
  \tropzero & \tropzero  &  0  & A_{34}  \\
  A_{41}  & \tropzero & \tropzero &    \tropzero  
    \end{pmatrix}.
\end{equation*}

For this digraph we have a the critical subgraph comprised of three separate loops at nodes $1$,$2$ and $3$. There is also a cycle of length $4$ which means the cyclicity of the digraph is $1$. We are going to present a class of words of infinite length such that the matrix generated by this class of words is not CSR.

We introduce a semigroup of tropical matrices with two generators $\mathcal{X} = \{A_{1},A_{2}\}$ 
where $A_{1}$ to $A_{2}$ are
\begin{equation*}
     A_{1} = \begin{pmatrix}
     0 & -100  &   \tropzero  & \tropzero    \\
  \tropzero  &   0   & -100  &  \tropzero  \\
  \tropzero & \tropzero  &   0 & -100 \\
  -100 & \tropzero & \tropzero &    \tropzero  
    \end{pmatrix},\quad 
    A_{2} = \begin{pmatrix}
     0 & -1  &   \tropzero  & \tropzero     \\
  \tropzero  &   0   & -1  &  \tropzero  \\
  \tropzero & \tropzero  &   0 & -100 \\
  -100 & \tropzero & \tropzero &    \tropzero 
  \end{pmatrix}
 \end{equation*}
and the class of the words that we will consider is 
$(1)^t2$, where $t\geq 2$. In other words we will consider a set of matrices of the form $U=(A_1)^tA_2$ (the actual value of $t\geq 2$ will not matter to us).

We have:  $U_{1,2}=-1$ (as the weight of the walk
$\underbrace{11\ldots 1}_{t+1}2$), 
$U_{2,3}=-1$ (as the weight of the walk $\underbrace{22\ldots 2}_{t+1}3$),and therefore $(CS^{t+1}R[U])_{1,3}= U^2_{1,3}=U_{1,2}\otimes U_{2,3}=-2$,  but $U_{1,3}=-101$ (as the weight of the walk 
$1\underbrace{22\ldots 2}_t3$).

Similarly, we can also look at the entry $U_{4,3}$. 
Then we have $U_{4,2}=-101$ (as the weight of the walk
$4\underbrace{11\ldots 1}_t2$), $U_{2,3}=-1$ and hence 
$(CS^{t+1}R)_{4,3}=(USU)_{4,3}=U_{4,2}\otimes U_{2,3}=-102$, but $U_{4,3}=-201$ (as the weight of the walk 
$41\underbrace{22\ldots 2}_{t-1}3$).

Here is an example of the word from the class for 
$t=10$ and the corresponding $CSR$
\begin{equation*}
W=\begin{pmatrix}
     0  &  -1 & -101 & -300 \\
  -300  &   0 &   -1 & -200 \\
  -200 & -201 &    0 & -100 \\
  -100 & -101 & -201 & -400
  \end{pmatrix}, \qquad CS^{11\modd{1}}R[W]= \begin{pmatrix}
   0  &  -1  &  -2 & -201 \\
  -201  &   0 &   -1 & -101 \\
  -200 & -201 &    0 & -100 \\
  -100 & -101 & -102 & -301
  \end{pmatrix}.    
\end{equation*}

Therefore any matrix product of length greater than $3$ which has been made following this word will not be CSR. Hence there can be no upper bound to guarantee the CSR decomposition in this case.

\section*{Acknowledgments}
The authors are grateful to Oliver Mason, Glenn Merlet, Thomas Nowak and Stephane Gaubert with whom the ideas of this paper were discussed.

\bibliographystyle{siamplain}
\bibliography{references}

\end{document}